\newtheorem{theorem}{Theorem}[section]
\newtheorem{thm}[theorem]{Theorem}
\newtheorem{pro}{Proposition}[section]
\newtheorem{lemma}[pro]{Lemma}
\newtheorem{remark}[pro]{Remark}
\newtheorem{defi}{Definition}[section]
\newtheorem{example}{Example}[section]
\numberwithin{equation}{section}
\def\cal{\mathcal }
\def\R{\mathbb R}
\def\mathscr{\mathcal }
\newcommand{\bbeta}{{\boldsymbol{\beta}}}
\newcommand{\bomega}{{\boldsymbol {\omega}}}
\newcommand{\bgamma}{{\boldsymbol {\gamma}}}
\newcommand{\mi}{{\mathbf{i}}}
\newcommand{\cS}{{\mathcal S}}
\begin{document}

\title[Space-filling curves]
{Space-filling curves  of  self-similar  sets (II):
Edge-to-trail substitution rule}

\author{Xin-Rong Dai}
\address{Xin-Rong Dai: School of Mathematics, Sun Yat-sen University, CHINA}
\email{daixr@mail.sysu.edu.cn}
\author{Hui Rao}
\address{Hui Rao: Department of Mathematics and Statistics, Central China Normal University, CHINA}
\email{hrao@mail.ccnu.edu.cn}
\author{Shu-Qin Zhang $\dag$}
\address{Shu-Qin Zhang: Chair of Mathematics and Statistics, University of Leoben, AUSTRIA }
\email{zhangsq\_ccnu@sina.com}

\thanks{$\dag$ The correspondence author.}
\thanks{The work is supported by CNFS Nos 11431007,~11171128 and 11471075,
 and the doctoral program W1230 granted by the Austrian Science Fund (FWF)}
\thanks{Key words: Space-filling curve, Self-similar set,  Edge-to-trail substitution}
 \maketitle
\begin{abstract}
  It is well-known that the constructions of space-filling curves depend on certain substitution rules.
For a given self-similar set, finding such  rules is somehow mysterious,
and it is the  main concern of the present paper.

 Our first idea is to introduce the notion of skeleton for a self-similar set. Then, from a  skeleton, we construct several graphs, define edge-to-trail substitution rules, and
  explore conditions  ensuring the  rules  lead to space-filling curves.
  Thirdly, we summarize the classical constructions of the space-filling curves into two classes: the traveling-trail class and the positive Euler-tour class. Finally,
  we propose a general Euler-tour method, using which we
       show that if a  self-similar set satisfies the open set condition and possesses a skeleton,
      then space-filling curves can be constructed.
   Especially, all connected self-similar sets of finite type fall into this class.
    Our study actually  provides an algorithm to construct space-filling curves of self-similar sets.
 \medskip

 \textbf{MSC 2000:} 28A80, 54C05.
\end{abstract}

\section{\textbf{Introduction}}\label{sec:intro}

Space-filling curves (SFC) have fascinated mathematicians for over a century since Peano's monumental work in 1890.
In a series of three papers (\cite{RaoZh15},  the present paper,
and \cite{RaoZh17}), we develop a theory to construct SFCs of self-similar sets.
 For a given connected self-similar set, finding substitution rules  leading to SFCs is a long-standing and difficult problem, and
 it is the main concern  of the present paper.

\subsection{A brief history of space-filling curves.}
SFCs of the first generation
 were constructed by  Peano (1890),
 Hilbert (1891),  Sierpi\'nski (1912) and P\'olya (1913)   (\cite{Peano1890, Hilbert1891, Sierp1912, Polya1913}), where the base sets are squares or triangles.
 Later on, people found many beautiful reptiles as well as their space-filling curves, where the boundaries of the reptiles are fractals; for instance,
  Heighway dragon curve (\cite{Davis70}),
  Gosper curve (\cite{Gardner76}), \textit{etc.}.
  A survey of the early results can be found in Sagan \cite{Hans94}. In recent years, various interesting  SFCs
  appear  on the internet,
see for example, ``www.fractalcurves.com'' (\cite{Ventrella})
     and  ``teachout1.net/village/'' (\cite{Gary}).
     Figure \ref{Wedge-1} illustrates two of them.


 From the 1960's to the 1980's, two systematic methods were introduced to handle the SFCs. The first one is
the \emph{$L$-system method} introduced by Lindenmayer (\cite{Lind68}),
  a biologist;
this method is known to a very wide audience,
see  Bader \cite{Bader13}.
 The second method is
 the \emph{recurrent set method}  introduced by  Dekking \cite{Dekking82},
  which is   an improvement of the $L$-system method. See \cite{RaoZh15}
  for the comparing of the two methods.

\begin{figure}[h]
  \includegraphics[width=0.35 \textwidth]{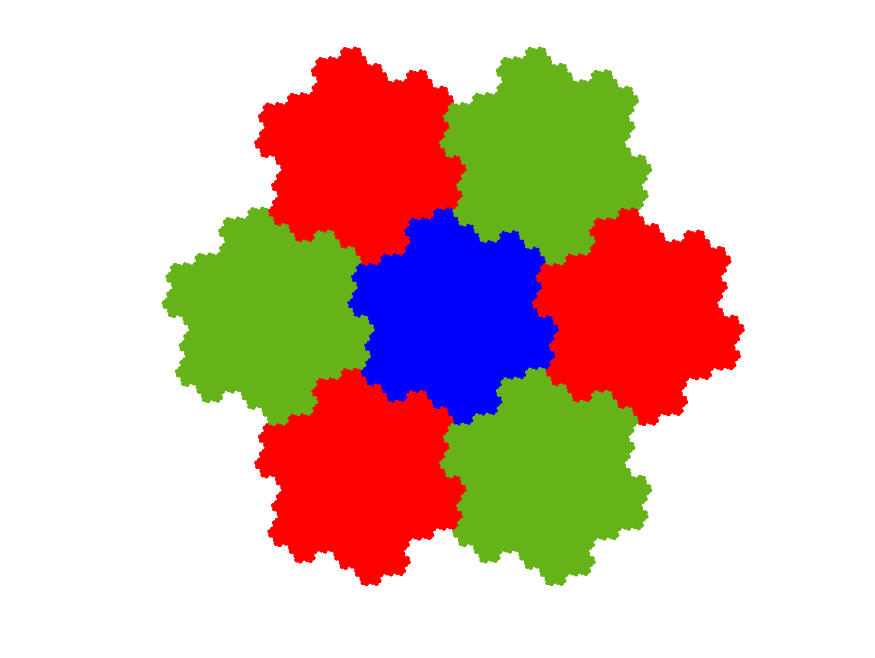}
  \includegraphics[width=0.35 \textwidth]{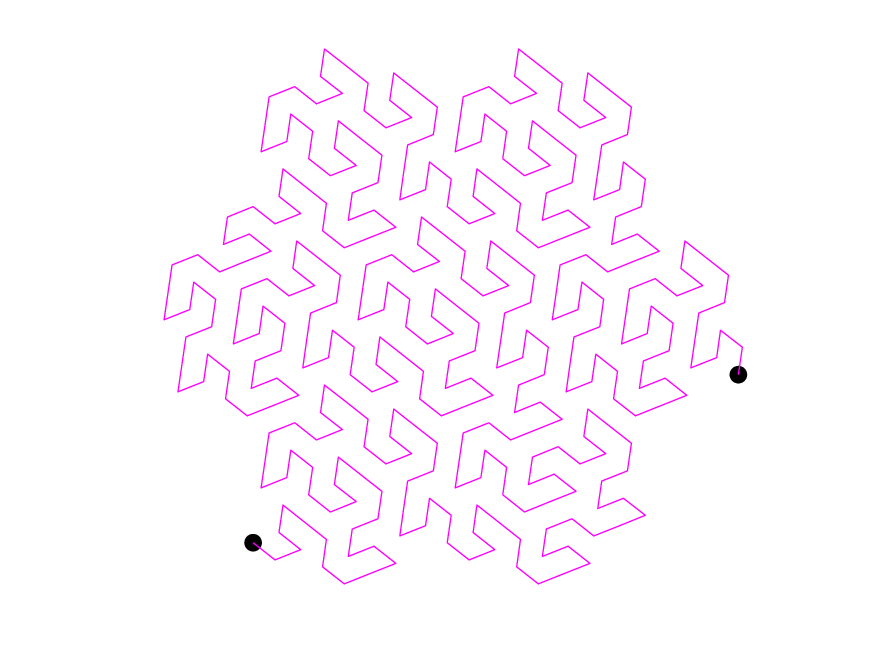} \hspace{-1.cm}
  \caption{Gosper island and Gosper curve.}
  \label{fig:Gosper}
\end{figure}

\begin{figure}[h]
  \subfigure[\text{Wedge tile } ]{\includegraphics[width=0.42\textwidth]{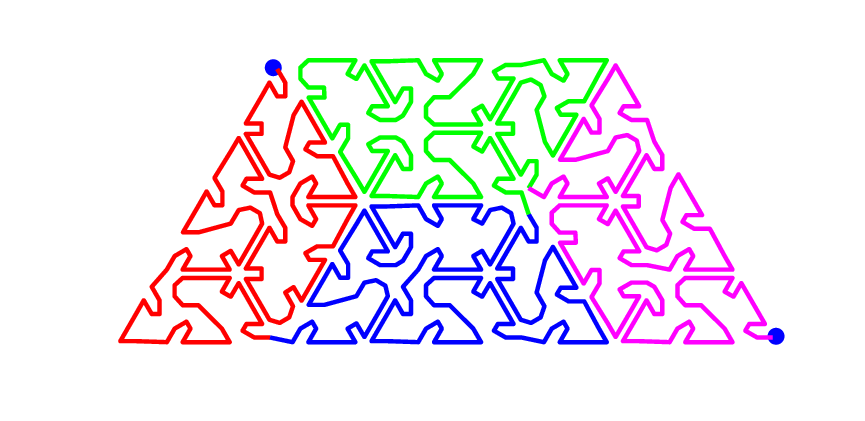}}\quad\quad
  \subfigure[\text{Four-tile star}]{\includegraphics[width=0.37 \textwidth]{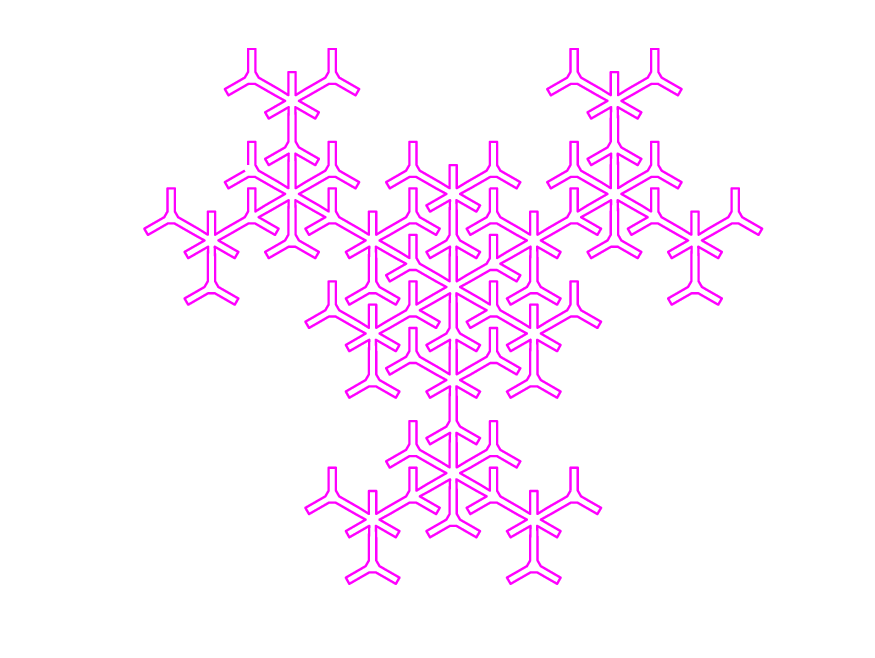}}\\
  \caption{ Two SFCs  taking from \cite{Gary}: (a) is discussed in Example \ref{ex-Wedge}, and
   a detailed discussion
   of (b) is given in Section 7 of \cite{RaoZh15}.
}
  \label{Wedge-1}
\end{figure}

All the known constructions of SFC depend on certain `substitution rules'. Indeed,
the $L$-system method and the recurrent set method provide exact meaning of `substitution rule' and build a bridge from substitution rules to SFCs, but they do not tell us how to construct substitution rules.

There are few works on the construction of substitution rules.
Hata \cite{Hata85} shows that if a self-similar set
is generated by a \emph{linear IFS} (see Section \ref{sec:travel} for the definition),  then a substitution rule can be obtained.
Another way is to consider the attractor of the    so-called  \emph{path-on-lattice} IFS (this  name is given by \cite{RaoZh15}).
 A path on a planar lattice   defines a substitution rule as well as a self-similar set; if the self-similar set happens to satisfy the open set condition, then the substitution rule leads to a SFC.
 This method is widely used to
  find   reptiles and  space-filling curves  by computer searching, see Fukuda \textit{et al.} \cite{Fukuda}, Arndt \cite{Arndt16} and   ``www.fractalcurves.com'' \cite{Ventrella}.
   But in this method,  the self-similar sets are not priori given.
  Other attempts of constructing substitution rules for special self-similar sets can be found in Remes \cite{Remes98} and Sirvent \cite{Sirvent2003, Sirvent2008,Sirvent2012}.

Before stating our results, we note that   as we did in \cite{RaoZh15},   the terminology `space-filling curve' is used in a  strong sense, that is,  it is a  kind of \emph{optimal  parametrization}. Let ${\mathcal H}^s$ denote the $s$-dimensional Hausdorff measure.

\begin{defi}{\rm
Let $K$ be a compact subset of ${\mathbb R}^d$ with $0<{\mathcal H}^s(K)<\infty$.
 An  onto mapping   $\psi:[0,1]\rightarrow K$ is called an
\emph{optimal parametrization} of $K$ if:

\ \ $(i)$  $\psi$ is \emph{almost one-to-one}, that is, there exist $K'\subset K$ and $I'\subset [0,1]$ with full measures such that
 $\psi:~I'\to K'$ is a bijection;

  \ $(ii)$ $\psi$ is \emph{measure-preserving} in the sense that
  $$
  {\mathcal H}^s(\psi(F))=c{\mathcal L}(F) \text{ and } {\mathcal L}(\psi^{-1}(B))=c^{-1}{\mathcal H}^s(B),
  $$
  for any Borel set $F\subset [0,1]$ and any Borel set $B\subset K$, where $c={\mathcal H}^s(K)$.

  $(iii)$ $\psi$ is $1/s$-\emph{H\"older continuous}, that is, there is a constant $c'>0$ such that
  $$
  |\psi(x)-\psi(y)|\leq c'|x-y|^{\frac{1}{s}} \  \text{ for all } x,y\in [0,1].
  $$
  }
  \end{defi}

 Recall that   a non-empty compact set   $K\subset \R^d$  is called a \emph{self-similar set}, if  there exist contractive similitudes  $S_1,\dots, S_N: \R^d\to \R^d$ such that
$$
K=\bigcup_{j=1}^N S_j(K).
$$
The family $\{S_1,\dots, S_N\}$ is called an \emph{iterated function system}, or \emph{IFS} in short;  $K$ is called the \emph{invariant set} of the IFS. (See for instance, \cite{Hut81, Fal90}).
The IFS $\{S_1,\dots, S_N\}$ is said to satisfy the \emph{open set condition (OSC)}, if there is an open set $U$ such that
$\bigcup_{i=1}^N S_i(U)\subset U$ and the sets $S_i(U)$ are disjoint (\cite{Hut81}).
If a self-similar set $K$ satisfies the OSC  and has non-empty interior,
 then  we call   $K$  a \emph{self-similar tile};
  if in addition, the contraction ratios of $S_i$ are all equal , then $K$ is called a
\emph{reptile} (\cite{Kenyon92}).

In this paper, we introduce a new  notion, called   \emph{skeleton} of a self-similar set, which is crucial in our theory.  As soon as we have a skeleton,
 we  construct a space-filling curve along the following line:

 \centerline{\textit{
Skeleton $\rightarrow$ graphs and edge-to-trail substitution  $\rightarrow$ space-filling curve  $\rightarrow$ visualization.}}

To `see' a SFC, we need to visualize or to approximate the curve, and this has  been studied in \cite{RaoZh15}.
In the following, we give a brief description of the first three  steps.


\subsection{Skeleton of a self-similar set}
Let $\{S_j\}_{j=1}^{N}$ be an IFS with invariant set   $K$. Let $A$ be a finite subset of $K$.
We call $A$ a \emph{skeleton} if it satisfies the following two conditions:

$(i)$ It is stable under iteration, that is,  $A \subset \bigcup_{j=1}^N S_j(A)$;

$(ii)$ It is a representative with respect to the connectedness, that is, the  so-called  Hata graph associated with $A$ is connected. (See Section \ref{sec:skeleton} for the precise definition.)

\begin{figure}[h]
  \centering
  \includegraphics[width=.27 \textwidth]{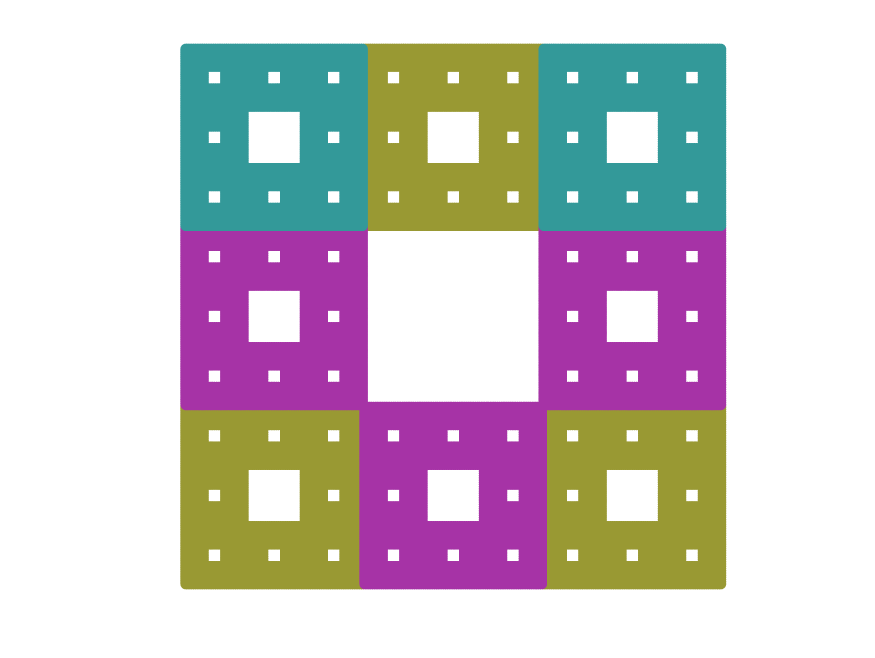}
  \includegraphics[width=.27 \textwidth]{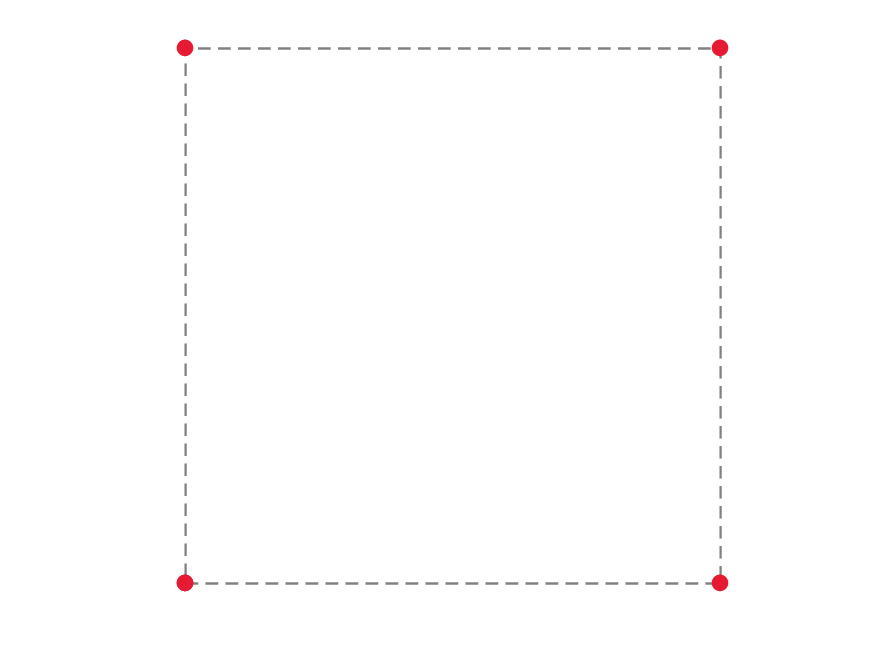}
  \includegraphics[width=.27 \textwidth]{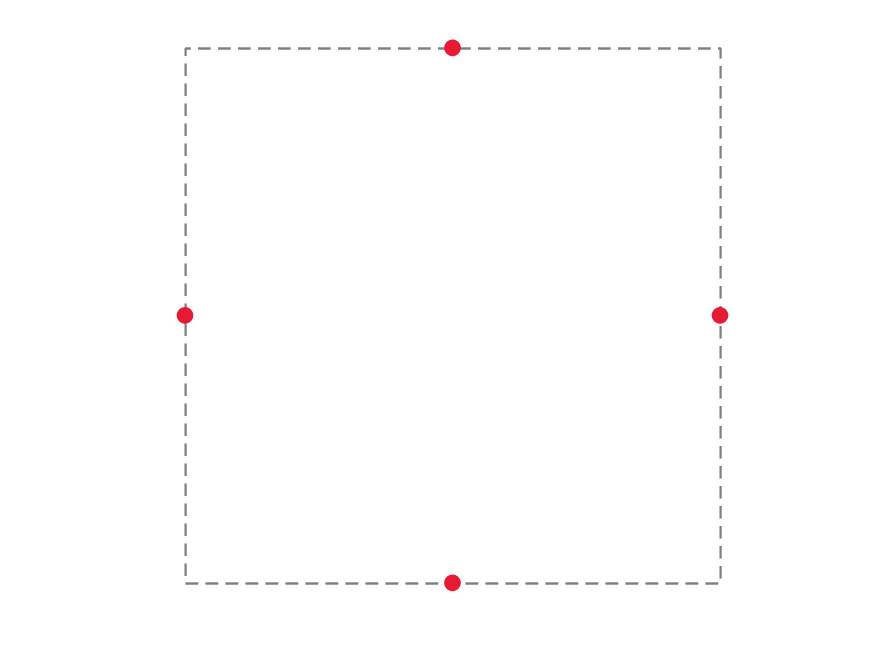}
  \caption{The  Sierpi\'nski carpet and  two skeletons of it. }\label{fig-carpet}
\end{figure}

 From now on, we always assume that $K$ is a connected self-similar set possessing a skeleton which  we  denote by
 \begin{equation}\label{eq:ske}
 A=\{a_1,\dots, a_m\}.
 \end{equation}

\subsection{Graphs and edge-to-trail substitution} First, we recall some terminologies of graph theory, see for instance, \cite{BalaRang2000}.
Let $H$ be a directed graph. We shall use $e_1+\dots+e_k$ to denote a walk consisting of
the edges $e_1,\dots, e_k$. We call the starting vertex and terminate vertex of a walk the \emph{origin} and \emph{terminus}, respectively.
The walk is \emph{closed} if the origin of $e_1$ and the terminus of $e_k$ coincide.

 A walk is called a \emph{trail}, if all the edges appearing in the walk are distinct.
A trail  is called a \emph{path} if all the vertices are distinct.
 A  closed path is called a \emph{cycle}.

 A subgraph $H'$ of $H$ is called \emph{spanning}, if $H'$ contains all the vertices of $H$.
 An \emph{Euler trail} in $H$ is a spanning trail in $H$ that contains all the edges of $H$.  An \emph{Euler tour} of $H$ is a closed Euler trail of $H$.

 \begin{figure}[h]
  \centering
  \subfigure[\text{Terdragon}]{\includegraphics[width=0.33 \textwidth]{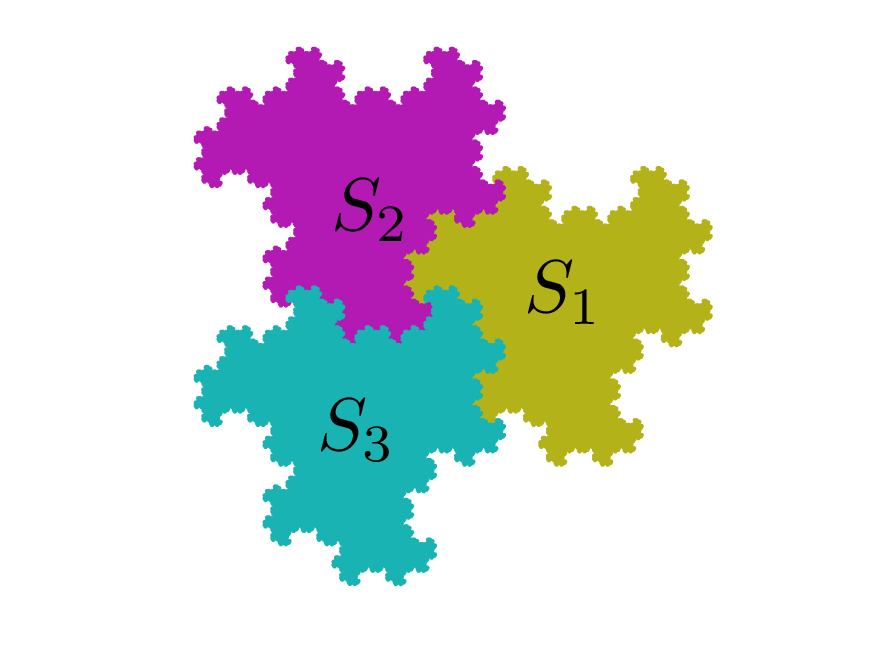}}
  \subfigure[\text{Initial graph}]{\includegraphics[width=0.3 \textwidth]{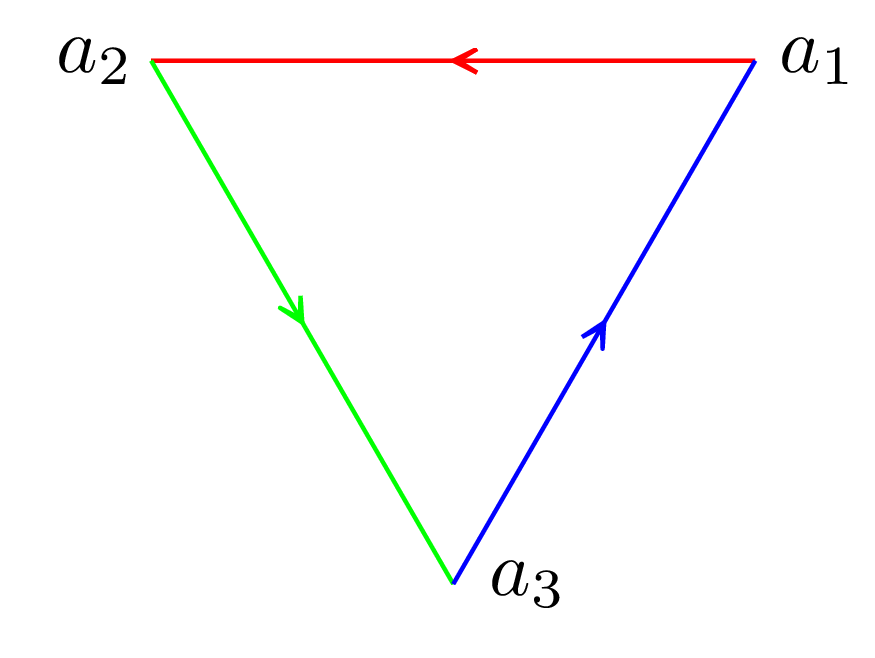}}
  \subfigure[\text{Refined graph}]{\includegraphics[width=0.33 \textwidth]{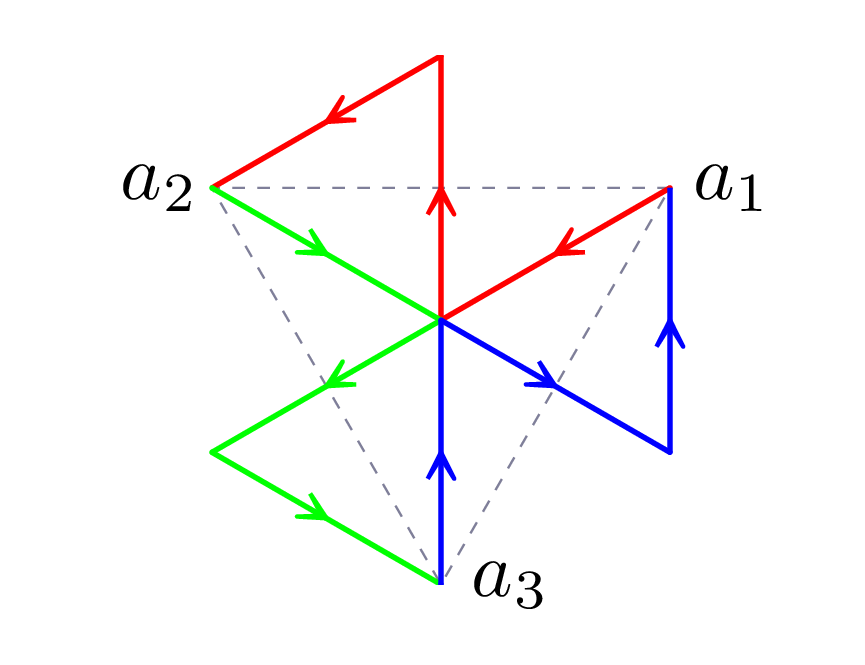}}\\
  \caption{An edge-to-trail substitution for Terdragon: an edge in (b)
  maps to the trail in (c) with the same color.
  Detail is given in Example \ref{Tdragon}.}
  \label{Tdragon}
\end{figure}

 Using the skeleton $A=\{a_1,\dots, a_m\}$, we define
 \begin{equation}
G_0=\{\overrightarrow{a_ia_j}; 1\leq i,j\leq m\}
 \end{equation}
  which is  the directed complete graph with vertex set $A$.

  We  select a spanning subgraph $\Lambda=(A,V)$ of $G_0$ as the initial  graph of our construction, where $A$ is the vertex set and $V$ is the edge set.
 Let $G$ be the union of   affine images of $V$ under $S_j$, i.e.,
  \begin{equation}
  G=\bigcup_{j=1}^N S_j(\Lambda).
  \end{equation}
 We call $G$ the \emph{refined graph} induced by $\Lambda$.
  (See Section \ref{sec:rule}.)

  If for each edge $u\in V$, we can find a trail $P_u$  in $G$ which shares
   the origin and terminus  with $u$, then we call the mapping
  $$u\mapsto P_u, u\in V$$
   an \emph{edge-to-trail substitution}.

\subsection{Feasible edge-to-trail substitutions}

Now we investigate when a rule $\tau$ leads to a space-filling curve, or equivalently, an optimal parametrization.  Rao and Zhang \cite{RaoZh15} introduce
  \emph{linear graph-directed iterated function system} (linear GIFS), which
    provides a criterion for optimal parameterizations. (See also Section \ref{sec:linear}.)

  \begin{theorem}\label{thm:RaoZh} (\cite{RaoZh15})
Let  $\{E_j\}_{j=1}^N$
be the invariant sets
 of a linear GIFS satisfying the open set condition and $0<\mathcal{H}^\delta(E_j)<\infty$ for $1\leq j \leq N$, where $\delta$ is the similarity dimension.
 Then $E_j$ admits  optimal parameterizations for every $j=1,\dots, N$.
\end{theorem}

An edge-to-trail substitution induces a linear GIFS in a nature way (Theorem \ref{thm:induce}).
We will impose conditions on  edge-to-trail substitutions so that
Theorem \ref{thm:RaoZh} can be applied.

First, using our point of view,  we summarize the constructions of SFC  in the literature into two classes,
the \emph{traveling-trail method} and the \emph{positive Euler-tour method}, and  provide a rigorous treatment to them.

\medskip

 \emph{(1) Traveling-trail method and self-similar zippers.}

 A trail of length $N$ in the refined graph $G=\bigcup_{j=1}^NS_j(\Lambda)$ is called a \emph{traveling trail}, if for every $j\in\{1,\dots, N\}$, the trail contains exactly one edge in  $S_j(\Lambda)$. We show that
\begin{theorem}\label{thm:travel} If all the trails $P_u$ in a edge-to-trail substitution  $\tau$ are
traveling trails, then $\tau$ leads to a space-filling curve of $K$.
\end{theorem}
\begin{figure}
  \centering
  \subfigure[\text{ Hexaflake fractal.}]{\includegraphics[width=0.28 \textwidth]{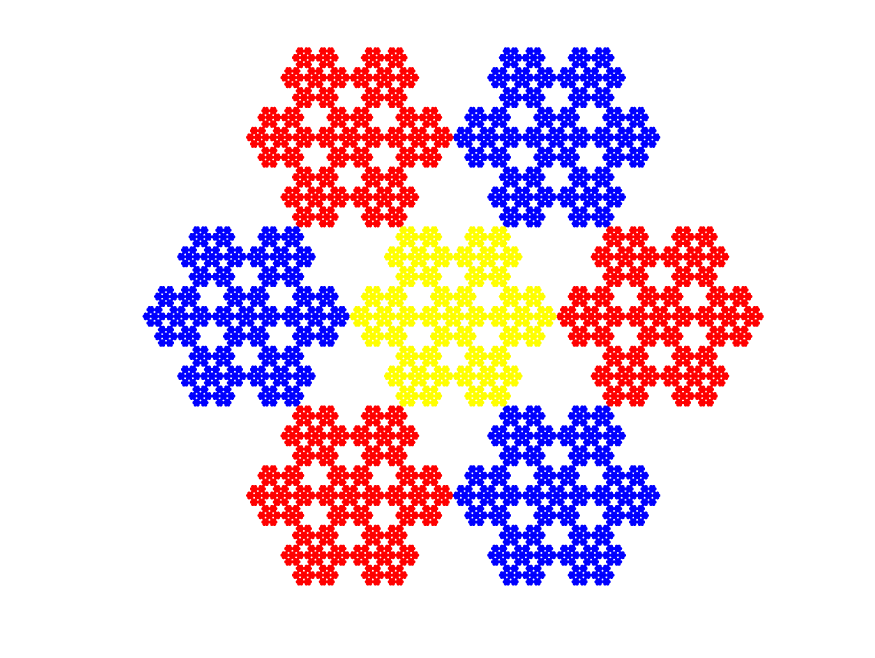}}
  \subfigure[{By traveling-trail method.}]{\includegraphics[width=0.29 \textwidth]{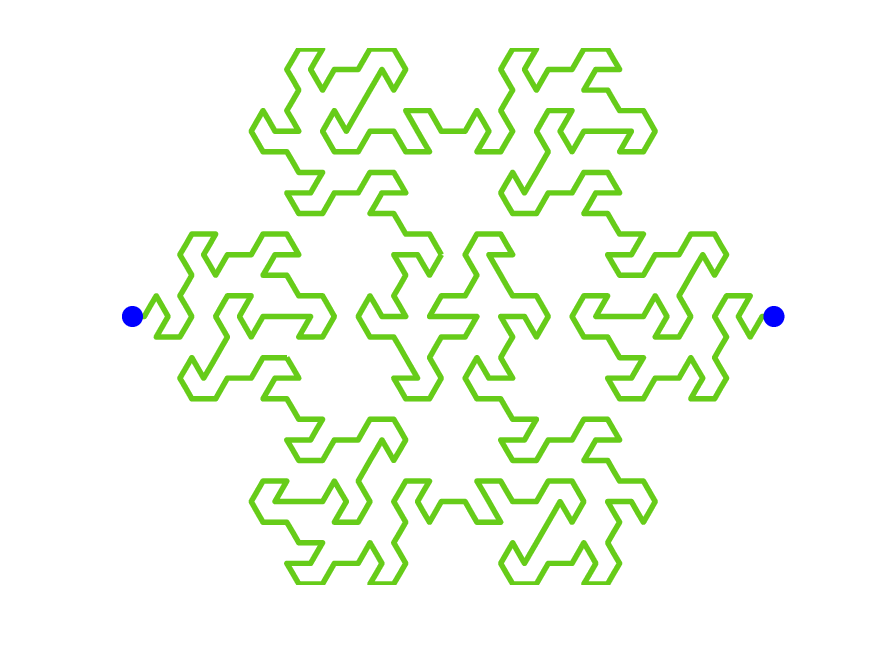}}
  \subfigure[{By Euler-tour method.}]{\includegraphics[width=0.28 \textwidth]{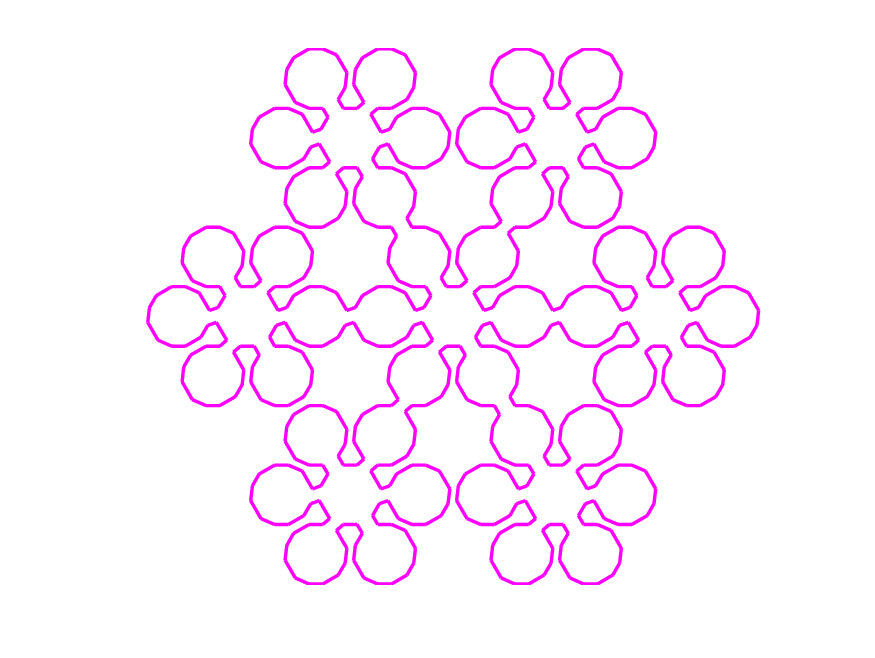}}\\
  \subfigure[Traveling trail from $a_4$ to $a_1$]{\includegraphics[width=0.3\textwidth]{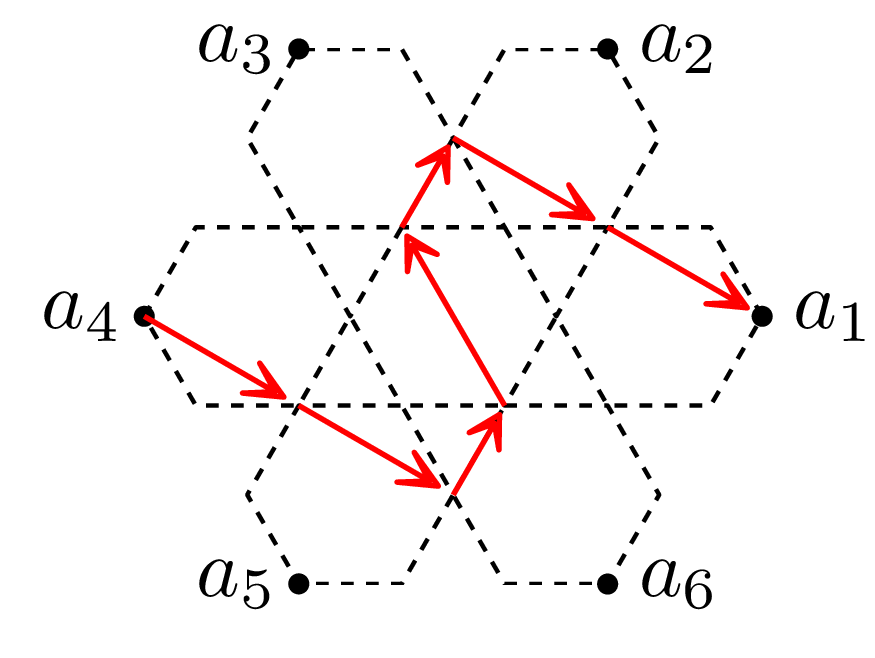}}
  \subfigure[Traveling trail from $a_4$ to $a_5$]{\includegraphics[width=0.3\textwidth]{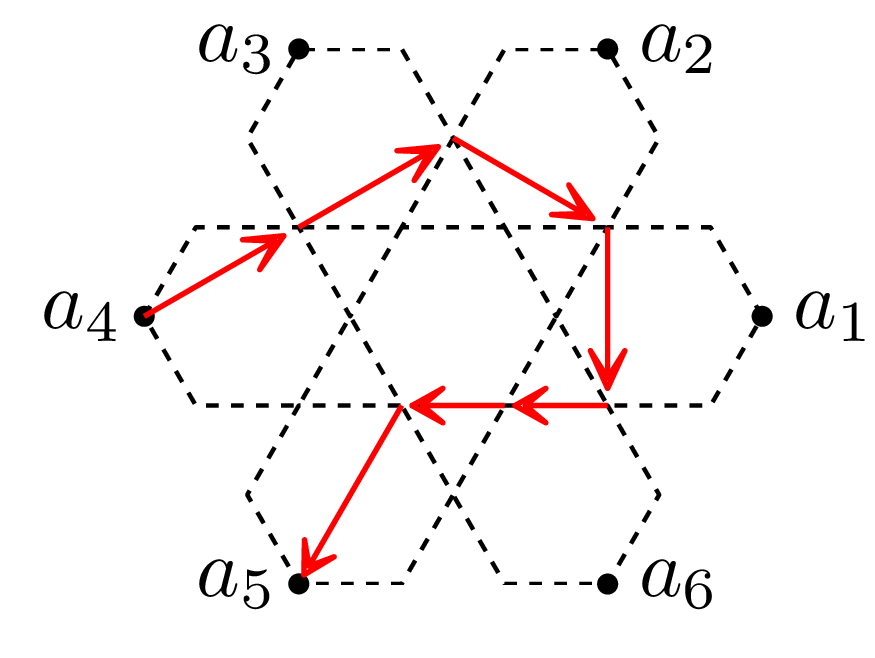}}
  \subfigure[Traveling trail from $a_4$ to $a_6$]{\includegraphics[width=0.3\textwidth]{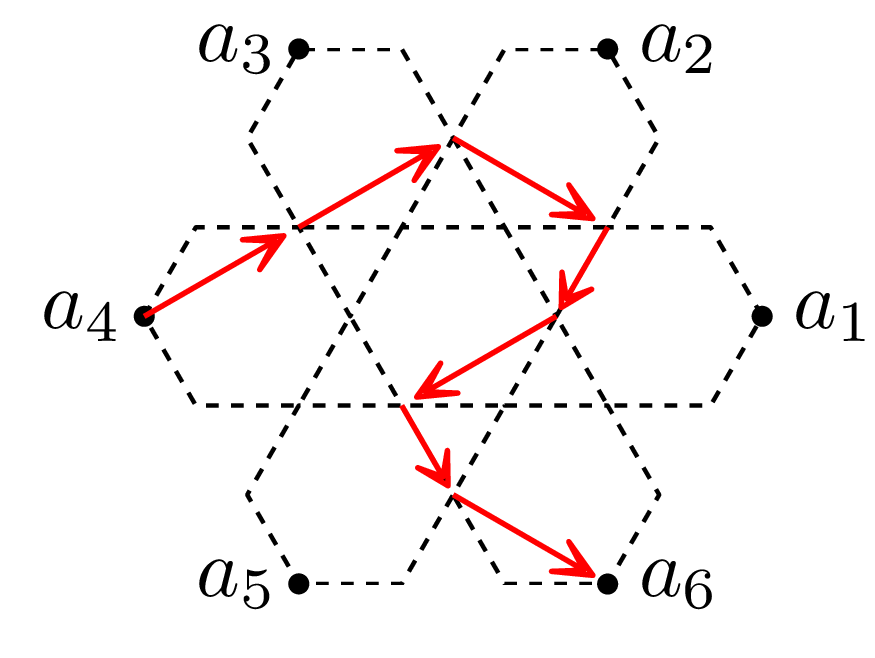}}
  \caption{Applying the traveling-trail method, we construct a space-filling curve of
  the Hexaflake fractal, see  Figure (b). The details are given in Example \ref{ex-Hexa}.
  The Hexaflake can also be parameterized by the positive Euler-tour method, see Figure (c).
  }\label{Hexaflake2}
\end{figure}

The following SFCs  are constructed by this method (see Section \ref{sec:travel}):
 Peano curve,  Hilbert curve,  Heighway dragon curve, Gosper curve, curves in Fukuda \text{et al.} \cite{Fukuda} and the  web-sites \cite{Ventrella, Gary}.


A special class of IFS, called self-similar zipper, is first introduced by Thurston (\cite{Thurston86}) and plays a role in
 complex analysis  (\cite{Astala88}). Recently, there are some works on self-similar zippers on the fractal aspect \cite{Aseev03, Tetenov08, Tetenov16}.
\begin{defi}
\rm{Let $(S_j)_{j=1}^N$ be an  IFS where the mappings are ordered. If there exists  a set $\{x_0,\dots, x_N\}$ of points and a sequence $(\beta_1,\dots, \beta_N)\in \{-1,1\}^N$, such that the mapping $S_j$ takes the pair $(x_0,x_n)$ either into the pair $(x_{j-1}, x_j)$ if $s_j=1$ or into the pair $(x_j,x_{j-1})$ if $s_j=-1$, then we call $(S_j)_{n=1}^N$ a  \emph{self-similar zipper}.

We call $\{x_0,\dots, x_N\}$ the set of vertices and call $(\beta_1,\dots, \beta_N)$ the vector of signature.}
\end{defi}

 Indeed, there is a one-to-one correspondence between self-similar zippers and `symmetric' linear GIFS with two states.

\begin{thm}\label{lem:zipper} An IFS  $(S_i)_{i=1}^N$ is a self-similar zipper with signature $(\beta_1,\dots, \beta_N)$ if and only if the following ordered GIFS with two states
\begin{equation}\left \{
\begin{split}
E_1=&S_1(E_{\beta_1})+S_2(E_{\beta_2})+\dots+S_N(E_{\beta_N}),\\
E_{-1}=&S_N(E_{-\beta_N})+S_{N-1}(E_{-\beta_{N-1}})+\dots+S_1(E_{-\beta_1})
\end{split}
\right .
\end{equation}
 is a linear GIFS.
If the OSC holds in addition, then  the invariant set $K$  of $(S_i)_{i=1}^N$  admits optimal parameterizations.
\end{thm}

 This theorem is proved in  Section \ref{sec:linear}. Actually, the proof  gives us an easy algorithm to determine whether an IFS is a zipper or not.
 \medskip

 \emph{(2) Positive Euler-tour method.}

A natural selection of the initial graph is
$
\Lambda=\overrightarrow{a_1a_2}+\dots+\overrightarrow{a_{m-1}a_m}+\overrightarrow{a_ma_1},
$
  the cycle passing all the elements of $A$.
Next, we choose an Euler tour of
 the refined graph $G$ and a  partition of this Euler tour. This partition can
 give us an edge-to-trail substitution, if a \emph{consistency condition} is fulfilled (see Section \ref{sec:euler}).
Besides, we pose two more conditions: the \emph{primitivity condition} and \emph{pure-cell condition}.
 We show that if
  these conditions are fulfilled, then the associated   edge-to-trail substitution
   leads to a SFC of $K$ (Theorem \ref{thm:old}).

 The following curves are constructed by this method: Sierpi\'nski curve,
 Terdragon curve in Dekking \cite{Dekking82}, the four-tile star in \cite{Gary} (see Figure \ref{Wedge-1}(b)).
 In this paper,  we present two new examples:  Sierpi\'nski carpet (Example \ref{EX:carpet}) and the Rocket tile (Example \ref{Rocket2}).

 \begin{figure}[h]
  \centering
  \subfigure[\text{Initial graph.}]{\includegraphics[width=.25 \textwidth]{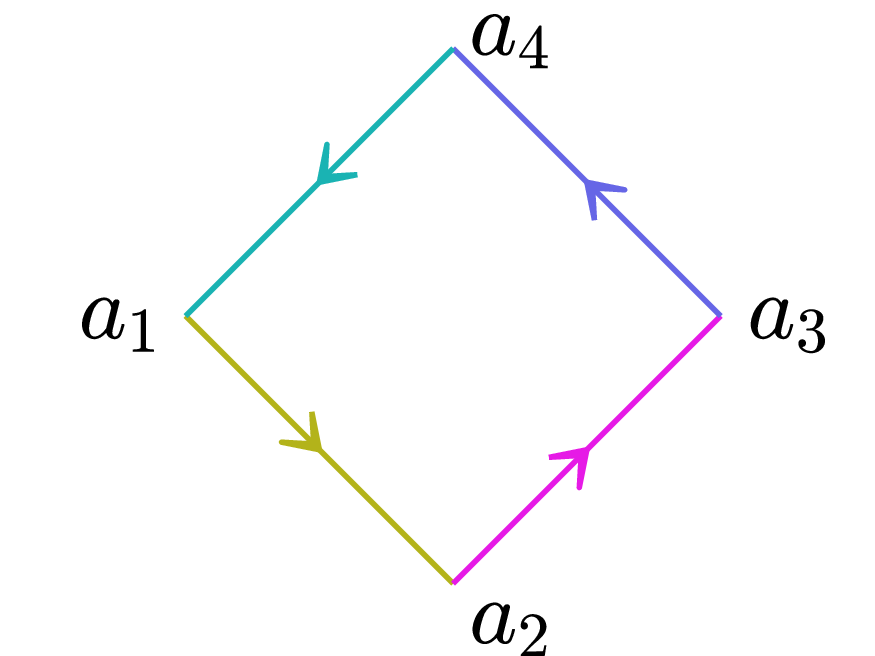}}
  \subfigure[\text{Euler tour.}]{\includegraphics[width=.25 \textwidth]{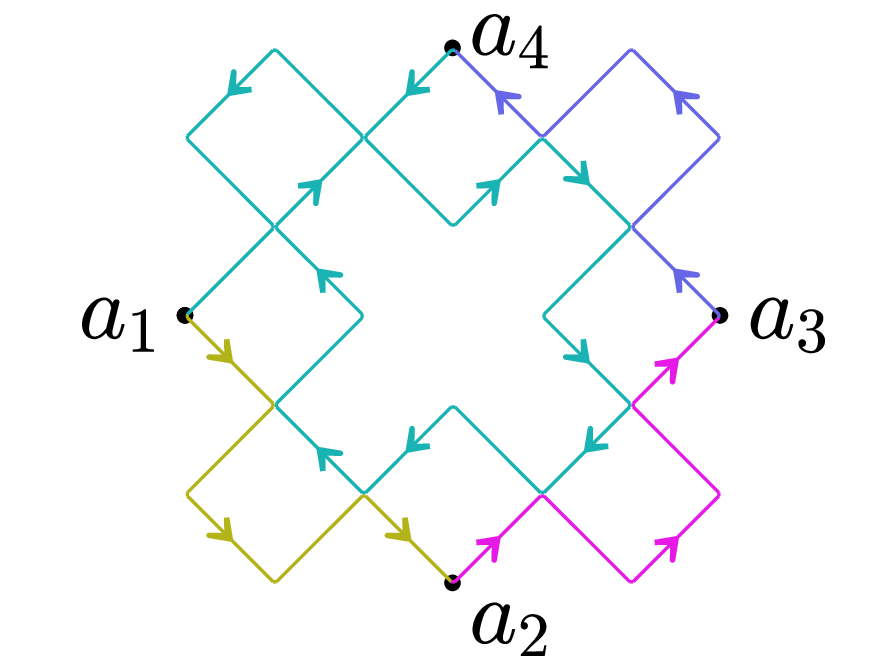}}
  \subfigure[\text{Initial patterns.}]{\includegraphics[width=.25 \textwidth]{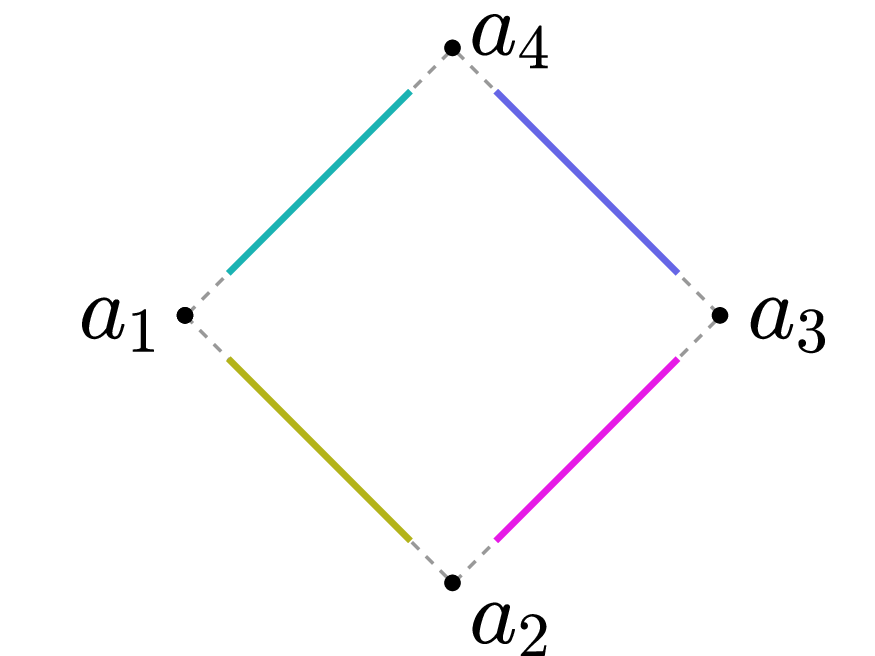}}\\
  \subfigure[\text{The first iteration.}]{\includegraphics[width=.25 \textwidth]{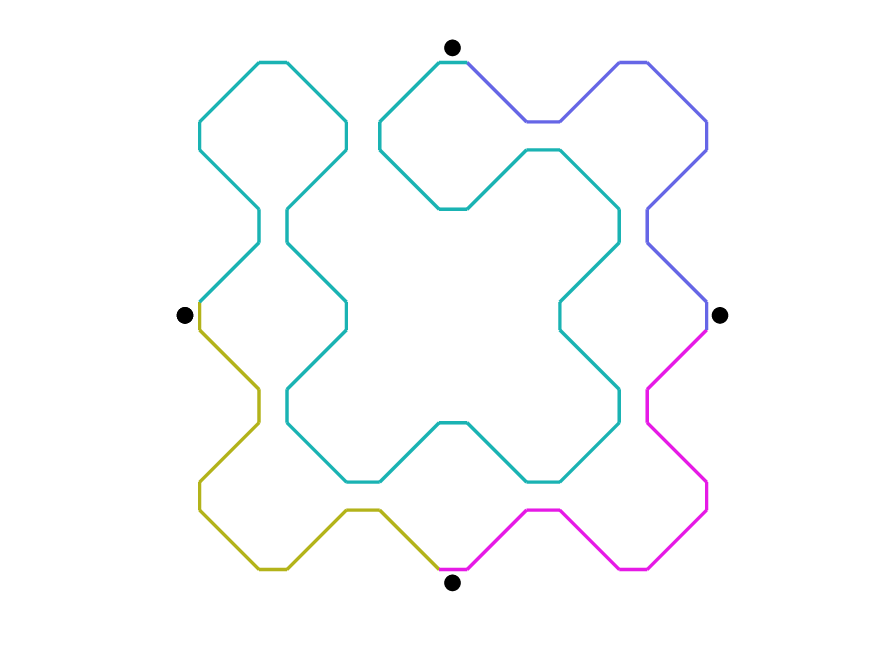}}
  \subfigure[\text{The second interation.}]{\includegraphics[width=.25\textwidth]{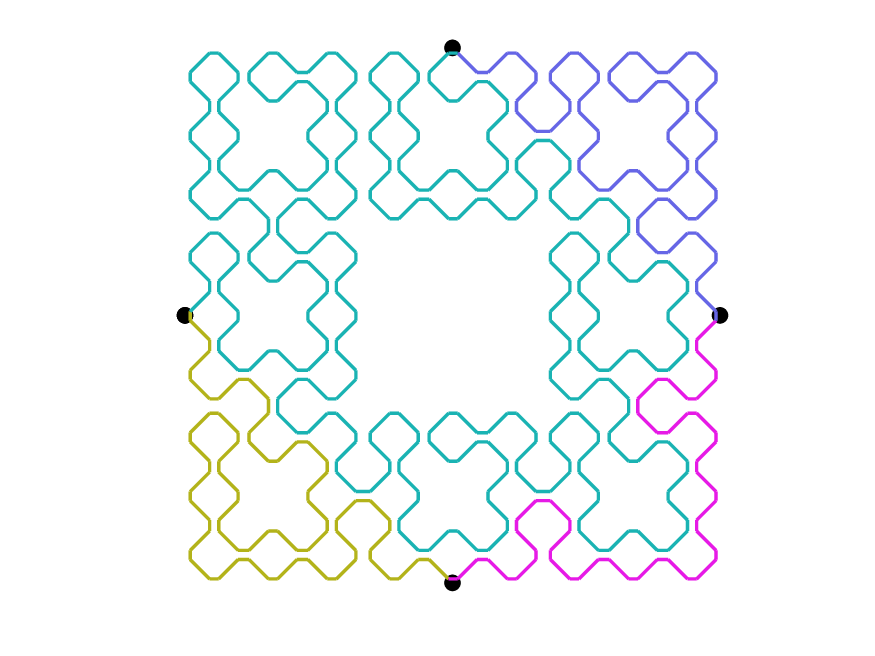}}
  \subfigure[\text{Invariant sets of  GIFS.}]{\includegraphics[width=.25 \textwidth]{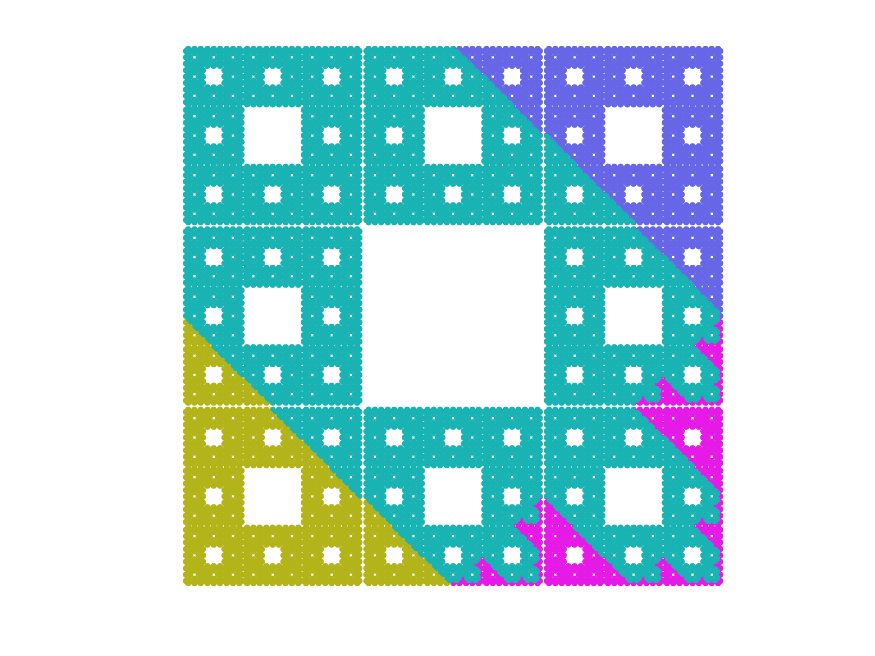}}
  \caption{A space-filling curve of the Sierpi\'nski carpet.}\label{carpet000}
\end{figure}

 Does every self-similar sets admit space-filling curves?
 To answer this question, we introduce a general Euler-tour method.

 \medskip

 \emph{(3) (General) Euler-tour method.}

Let $\Lambda^{-1}$ be the reverse cycle of $\Lambda$ in the positive Euler-tour method.
 Now, in the refined graph $G$, we allow \emph{negative orientation}, that is,
  for some $j$, we replace  $S_j(\Lambda)$  by $S_j(\Lambda^{-1})$.
 Then we have much more choices, which produce more refined graphs and Euler tours,
  and increases the possibility of finding a feasible edge-to-trail substitution.
 We  show that

\begin{theorem} \label{thm:main}
Let $\{S_j\}_{j=1}^N$ be an IFS  possessing  skeletons  and satisfying the open set condition.
Then the invariant set $K$ admits   space-filling curves.
More precisely,  either an rearrangement of $\{S_j\}_{j=1}^N$  is a self-similar zipper, or $K$ admits  space-filling curves constructed by the Euler-tour method.
\end{theorem}

The difficult part of Theorem \ref{thm:main} is to prove the non self-similar zipper case, which we divide into two steps.
First, we prove Theorem \ref{thm:old}, which
transfers the space-filling curve problem to a graph theory problem.
Then, we solve  the graph theory problem in Section \ref{sec:consistency} and \ref{sec:primitive}, where we use a bubbling process to produce the desired Euler tour and
then make a suitable choice of orientations of the basic cells.

  Self-similar sets of \emph{finite type} is an important class of fractals,
see for instance, \cite{RaoWen98, NgaiWang01, BandtMesi09}.
In a subsequent paper  \cite{RaoZh17},
we show that if $K$ is a self-similar set of finite type, then $K$ possesses skeletons.
Consequently, we have

\begin{thm}\label{thm:finite-type}
Let $K$ be  a connected self-similar set  of finite type and satisfying the open set condition. Then $K$ admits   space-filling curves.
 \end{thm}

  \begin{example}\label{Rocket1}
\textbf{Integral self-affine tiles.} {\rm Let $A$ be an integral $n\times n$ expanding matrix, and $D\subset {\mathbb Z}^n$
be a set with $\# D=|\det A|$, where $\# D$ denotes the cardinality of $D$. Let $T$ be the unique compact set satisfying
$$
AT=\bigcup_{d\in D} T+D.
$$
The set $T$ is called an \emph{integeral self-affine tile} if it has positive Lebesgue measure. (See Lagarias and Wang \cite{LW96} and the reference therein).
By Theorem \ref{thm:finite-type}, every integral self-affine tile admits space-filling curves.

The Rocket tile is taken from Duvall \textit{et al.} \cite{Vince2000}. It is an integral self-similar tile with  $9$ branches.
A SFC is shown in  Figure \ref{fig:Rocket}. The details are given in Example \ref{Rocket2}.

}
\begin{figure}[h]
\subfigure[]{\includegraphics[width=.4 \textwidth]{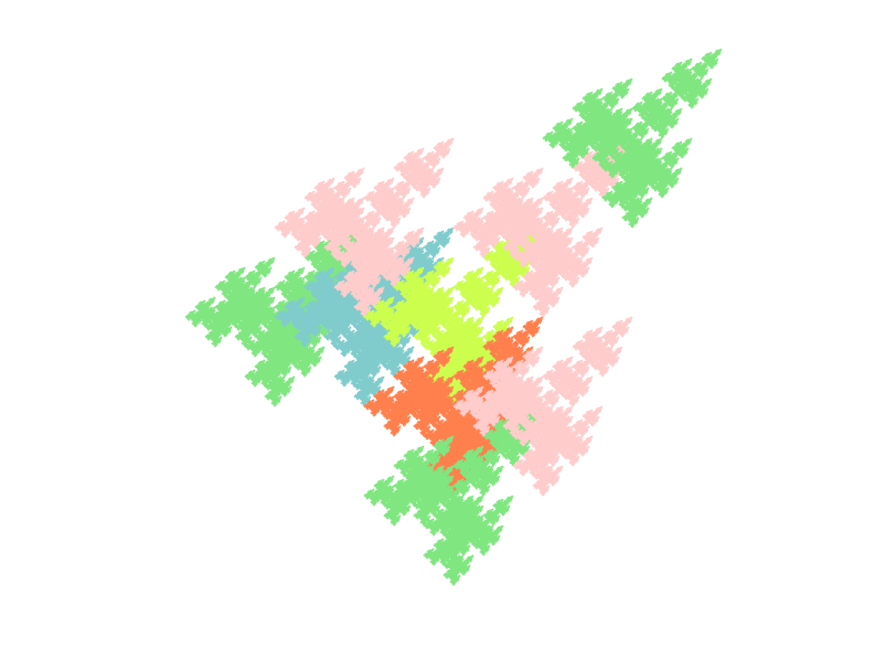}}
   \subfigure[]{\includegraphics[width=.4 \textwidth]{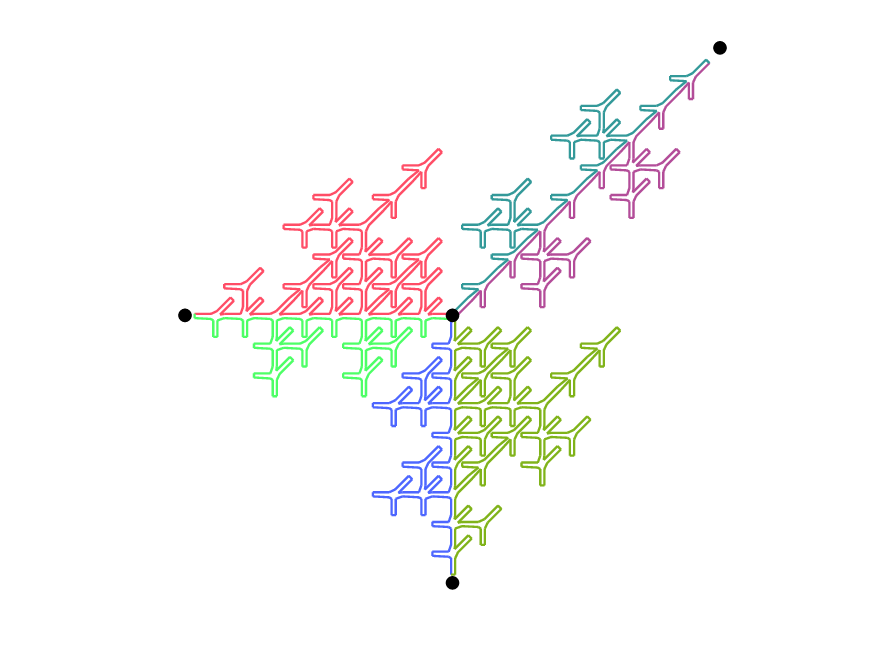}}
  \caption{A space-filling curve of the Rocket tile.}
  \label{fig:Rocket}
\end{figure}
\end{example}

\begin{example}
\textbf {The Christmas tree.} {\rm
The Christmas tree is a fractal with $5$ branches. Figure \ref{Fig_Christree} provides a space-filling curve of it, where the negative orientation
is involved; the details are given in Example \ref{Ex-Christ}.
It can be parameterized with the positive Euler-tour method, but the parametrization is  not measure-preserving,
since the edge-to-trail substitution cannot be primitive.
}
\end{example}


 \medskip

 The paper is organized as follows. In Section \ref{sec:skeleton}, we give a brief description of skeletons of self-similar sets.
 In Section \ref{sec:rule}, we describe the general philosophy of constructing edge-to-trail substitutions by graphs.
 Section \ref{sec:travel} and Section \ref{sec:positive} are devoted to the traveling-trail method and the positive Euler-tour method, respectively.
 In Section  \ref{sec:linear} and
 Section \ref{sec:induce}, we show an induced GIFS is always a linear GIFS, and Theorem \ref{thm:travel} is proved there.
   Section 8-11 are devoted to the  Euler-tour method, and Theorem \ref{thm:main} is proved in Section \ref{sec:pure}.


 \section{\textbf{Skeleton of self-similar set}}\label{sec:skeleton}

In this section, we give a brief introduction to skeletons of self-similar sets.
A  detailed study is carried out in Rao and Zhang \cite{RaoZh17}.

Let $\cS=\{S_j\}_{j=1}^{N}$ be an IFS with invariant set   $K$.
For any subset $A$ of $K$, we define a graph $H(A)$ as following:
The    vertex set is $\{S_1,S_2,\dots,S_N\}$, and there  is an edge between two vertices $S_i$ and $S_j$ if and only if $S_i(A)\cap S_j (A)\neq\emptyset$. We call $H(A)$  the \emph{Hata graph} induced by $A$.

\begin{remark}{\rm Such graphs are first studied by Hata \cite{Hata85}, where he proved that
a self-similar set $K$ is connected if and only if the graph $H(K)$ is connected.}
\end{remark}

\begin{defi}\label{def-skeleton}
{\rm Let $K$ be a connected self-similar set, and let $A$ be a finite subset of $K$.  We call $A$ a \emph{skeleton} of $\{S_j\}_{j=1}^{N}$  (or  $K$), if $A \subset \bigcup_{j=1}^N S_j(A)$
 and the Hata graph $H(A)$ is connected.}
\end{defi}

\begin{remark}{\rm
 Kigami \cite{Kigami} and Mor\'an \cite{Mor99} have studied the `boundary' (also called 'vertices' if it is finite) of a fractal.
 A skeleton  is usually chosen to be a  subset of the `boundary' of a self-similar set
 since we want a small skeleton;   for a so-called p.c.f. self-similar set, the set of vertices is a skeleton. Indeed,   the choices of skeletons are much more arbitrary.
}\end{remark}

\subsection{ Iteration}
We denote
$\Sigma=\{1,\dots, N\}$
and call it an \emph{alphabet}.
Let $I=i_1i_2\dots i_n\in \Sigma^n$, we call it a \emph{word of length $n$}.  We denote the length of a word $I$ by $|I|$.
We define the $n$-th \emph{iteration} of $\mathcal{S}$ to be the IFS
 $$\cS^n=\{S_I;~I \in \Sigma^n\},$$
 where  
 $$S_I=S_{i_1}\circ S_{i_2}\circ\cdots\circ S_{i_n},\quad \text{ if }  I=i_1\dots  i_n.$$
It is well-known  that the invariant set $K$ of the IFS $\cS$ is also the invariant set  of $\cS^n$ (see Falconer \cite{Fal90}).
Similarly, we have

\begin{pro} (\cite{RaoZh17})
 If $A$ is a skeleton of $\mathcal{S}$, then $A$ is also a skeleton of $\mathcal{S}^n$.
\end{pro}

  Using neighbour graph of self-similar sets,  it is shown (\cite{RaoZh17})  that
a connected self-similar set of finite type always
   possesses skeletons, and  actually   an algorithm of finding skeletons is given there.
There do exist self-similar sets without skeletons.

\begin{figure}[h]
  \centering
  \includegraphics[width=0.4\textwidth]{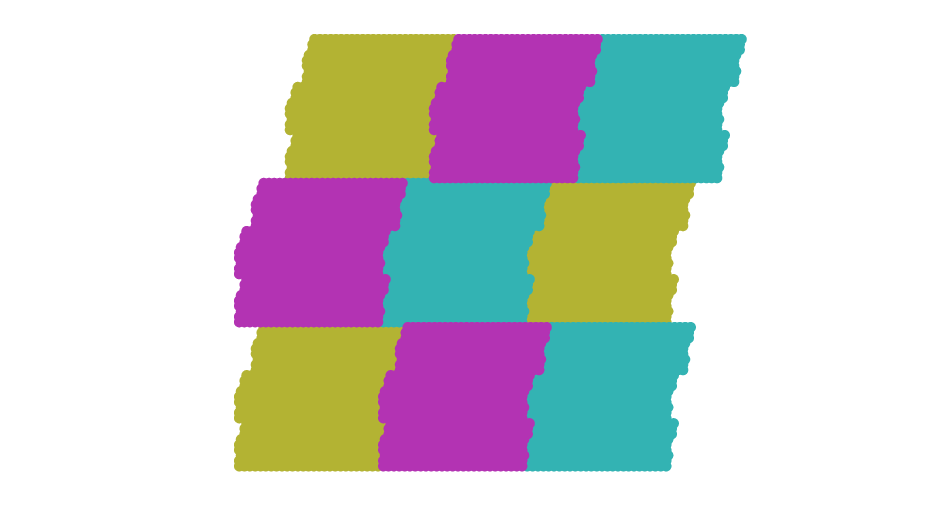}
  \caption{A reptile  without skeleton, see \cite{RaoZh17}.}\label{non-ske}
\end{figure}

\section{\textbf{Graphs and edge-to-trail substitutions: the general philosophy}}\label{sec:rule}
Let $\mathcal{S}=\{S_j\}_{j=1}^N$ be an IFS   possessing a  skeleton  and  satisfying the OSC.
Let us denote its invariant set by $K$,
and let  $A=\{a_1,a_2,\dots,a_m\}$ be a  skeleton. Recall
$$G_0=\{\overrightarrow{a_ia_j};~1\leq i,j\leq m\},$$
 is a directed complete graph with vertex set $A$.
 We note that the edges
$\overrightarrow{a_ia_j}$ are abstract edges rather than oriented line segments;
  moreover, $i$ may be equal to $j$, see Example \ref{ex-Wedge}.

Next, we choose a subgraph  $\Lambda=(A,V)$  of $G_0$, and we call $\Lambda$ the \emph{initial graph}.
To continue our construction, we need to define the \emph{affine copy} of a directed graph.

 \begin{defi}{\rm Let $G=(\mathcal{A},\Gamma)$ be a directed graph such that
 the vertex set  $\mathcal{A} \subset \mathbb{R}^d$.
  Let $S: {\mathbb R}^d\to \R^d$ be a affine mapping. We define a directed graph $G_S=(S(\mathcal{A}), \Gamma_S)$ as follows:
there is an edge in $\Gamma_S$ from $S(x)$ to $S(y)$,
if and only if there is an edge $e\in \Gamma$ from vertex $x$ to $y$. Moreover, we denote this edge by  $(e,S)$.
For simplicity, we shall denote $G_S,  \Gamma_S,$ and $(e,S)$ by $S(G),  S(\Gamma)$ and $S(e)$, respectively.
}
\end{defi}

\begin{remark}{\rm (i) If  $(\mathcal{A}_1,\Gamma_1)$ and $(\mathcal{A}_2,\Gamma_2)$ are two graphs
 without common edges, then we define  their union   to be the graph $(\mathcal{A}_1\cup \mathcal{A}_2, \Gamma_1\cup \Gamma_2)$.

 (ii) Even if $S_{j}(e_k)$ coincides with $S_{j'}(e_{k'})$ as oriented line segment, they should regarded  as different edges,
 since $(e_k, S_{j})\neq (e_{k'}, S_{j'})$.
 }
 \end{remark}

\subsection{Refined graph and edge-to-trail substitution}
Let $G$ be the union of affine images of $\Lambda$ under $S_j$, that is,
\begin{equation}
G=\bigcup_{j=1}^NS_j(\Lambda),
\end{equation}
and we call it  the \emph{refined graph induced by $\Lambda$}.

Let $\tau$ be a mapping from $V$  to trails of $G$; we shall denote $\tau(u)$ by $P_u$ to emphasize that $\tau(u)$ is a trail. We call $\tau$   an \emph{edge-to-trail substitution}, if for all $u\in V$,
 $P_u$  has the same origin and terminus  as $u$.

An edge-to-trail substitution $\tau$ can be thought as replacing each big edge  $u$ by  a trail $P_u$ consisting of
 small edges. Our goal is to construct feasible edge-to-trail substitutions which lead to SFCs.

\subsection{Iteration of  edge-to-trail substitutions}
We use the following two rules to iterate $\tau$:

(i) For $I\in\bigcup_{n\geq 1} \{1,2,\dots,N\}^n$ and $u\in V$,  if $\tau(u)=\gamma_1+\dots+\gamma_\ell$, we  set
\begin{equation}\label{iter}
\tau(S_I(u))=S_I(\gamma_1)+\dots+S_I(\gamma_\ell);
\end{equation}

 (ii) Set
 $$\tau( w_1+w_2+\dots+ w_k)=\tau(w_1)+\tau(w_2)+\dots +\tau(w_k)$$
 if $w_j\in \{S_{I}(v); I\in \{1,2,\dots, N\}^n, v\in V\}$ for some $n$.

   Hence, we can define $\tau^n(u)$ recurrently, which is a trail consisting of small edges.
  Geometrically,  we can explain $\tau^n(u)$ as an oriented broken line which  provides an approximation of the corresponding SFC.


\section {\textbf{Traveling-trail method}}\label{sec:travel}

Recall that a trail $P$ in $G=\bigcup_{j=1}^NS_j(\Lambda)$ is called a \emph{traveling trail}, if for every $j$, $P$ contains exactly one edge in  $S_j(\Lambda)$.
Theorem \ref{thm:travel} asserts that if all the trails $P_u$ in a edge-to-trail substitution  $\tau$ are
traveling trails, then $\tau$ leads to a SFC of $K$.
We postpone the proof of Theorem \ref{thm:travel} to Section \ref{sec:induce}.
 In this section, we summarize the SFCs constructed by this method.

\subsection{ \textbf{Linear IFS}}
Let $K$ be a self-similar set. Assume that $K$ has a skeleton consisting of two points, say $A=\{a,b\}$. Denote  $u=\overrightarrow{ab}$ and let $V=\{u\}$. If
$S_1(u)+\cdots+S_N(u)$ is a trail from $a$ to $b$, where we use the symbol `+' to connect the
consecutive edges or sub-trails, then
\begin{equation}\label{Peano-rule}
\tau: u\mapsto S_1(u)+\cdots+S_N(u)
\end{equation}
is a feasible edge-to-trail substitution, since $\tau(u)$ is a traveling trail.
 Such IFS, called a linear IFS in \cite{RaoZh15},
was first studied by De Rahm \cite{deRham57} and Hata \cite{Hata85}, where they proved that a linear IFS leads to a SFC,  which is a direct generalization of
 Peano's original construction.

\subsection{ \textbf{Self-similar zipper}} Let $K$ be the 
 invariant set of a self-similar zipper $\{S_j\}_{j=1}^N$ with vertices $\{x_0, \dots, x_N\}$ and signature $\{\beta_1,\dots, \beta_N\}$.
 Then $K$ possesses a skeleton $A=\{x_0, x_N\}$.
  Denotes $u=\overrightarrow{x_0x_N}$ and set $V=\{u, u^{-1}\}$, where $u^{-1}$ denote the reverse edge of $u$. Clearly
$$
\tau: \left \{
\begin{array}{rl}
u & \mapsto S_1(u^{\beta_1})+\cdots+S_N(u^{\beta_N})\\
 \quad u^{-1} &\mapsto \text{reverse trail of } S_1(u^{\beta_1})+\cdots+S_N(u^{\beta_N})
 \end{array}\right .
$$
is a  feasible edge-to-trail substitution since  both $\tau(u)$ and $\tau(u^{-1})$ are traveling trails.
(The path-on-lattice IFS in \cite{RaoZh15} is a special case of the self-similar zipper).
 Hilbert curve, Heighway dragon curve and Gosper curve are obtained by this way.
See  Section 5 of \cite{RaoZh15} for details.

\subsection{\textbf{Space-filling curves of polygonal reptiles.}}
In the web-site Teachout \cite{Gary}, there are many interesting SFCs of polygonal reptiles.  These curves are  obtained by traveling-trail method.  We take the Wedge tile as an example.

 \begin{example}\label{ex-Wedge} \textbf{The Wedge Tile.} {\rm   The Wedge tile is a self-similar set
generated by the IFS $\{S_i\}_{i=1}^4$, where the maps are indicated by Figure \ref{Wedge-2}(a),(b).
(Here we use an arrow to specify the linear part of $S_j$ contains reflection or not. )

 We choose $A=\{a_2, a_4\}$, two vertices of the wedge,  to be the skeleton.
Choose
$$V=\{\overrightarrow{a_4a_2},\overrightarrow{a_2a_4},  \overrightarrow{a_4a_4} \}=:\{\alpha, \beta, \gamma\}.$$ Then
$$\tau:~ \left \{
\begin{array}{rl}
\alpha & \mapsto S_1(\beta)+S_2(\beta)+S_3(\gamma)+S_4(\alpha),\\
\beta& \mapsto S_4(\beta)+S_3(\gamma)+S_2(\alpha)+S_1(\alpha),\\
  \gamma &\mapsto S_3(\beta)+S_4(\gamma)+S_2(\alpha)+S_1(\alpha)
  \end{array} \right .
  $$
is an edge-to-trail substitution.
 The trails  $\tau(\alpha)$ and $\tau(\gamma)$ are illustrated by
 Figure \ref{Wedge-2}(c) and (d); the trail  $\tau(\beta)$ is the reverse trail of $\tau(\alpha)$.
 A visualization of the SFC corresponding to $\tau$  is shown in Figure \ref{Wedge-1}(a).
  }
\end{example}

\begin{figure}[h]
 \centering
  \subfigure[\text{}]
 { \includegraphics[width=0.35\textwidth]{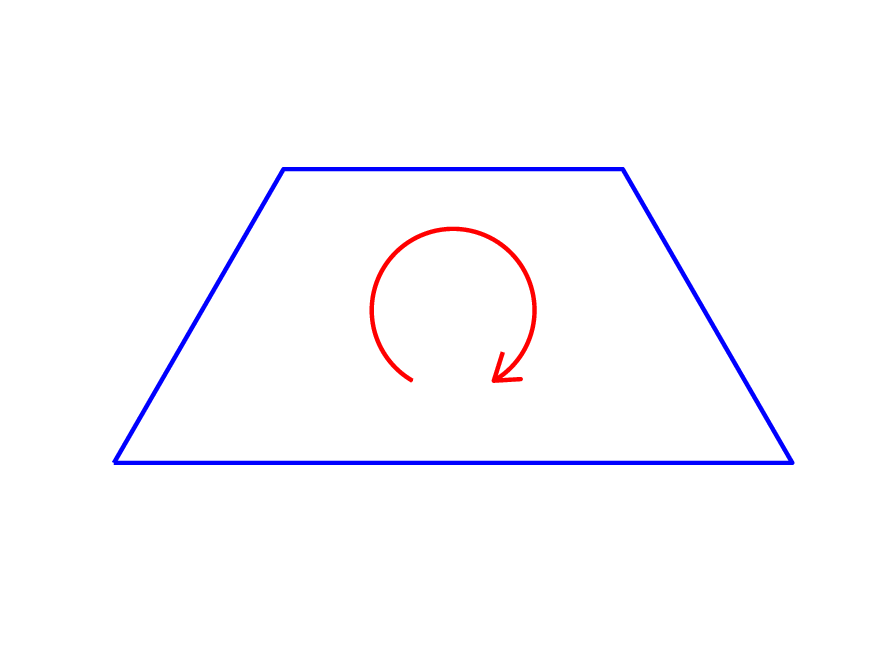}}
  \subfigure[\text{}]
  {\includegraphics[width=0.35\textwidth]{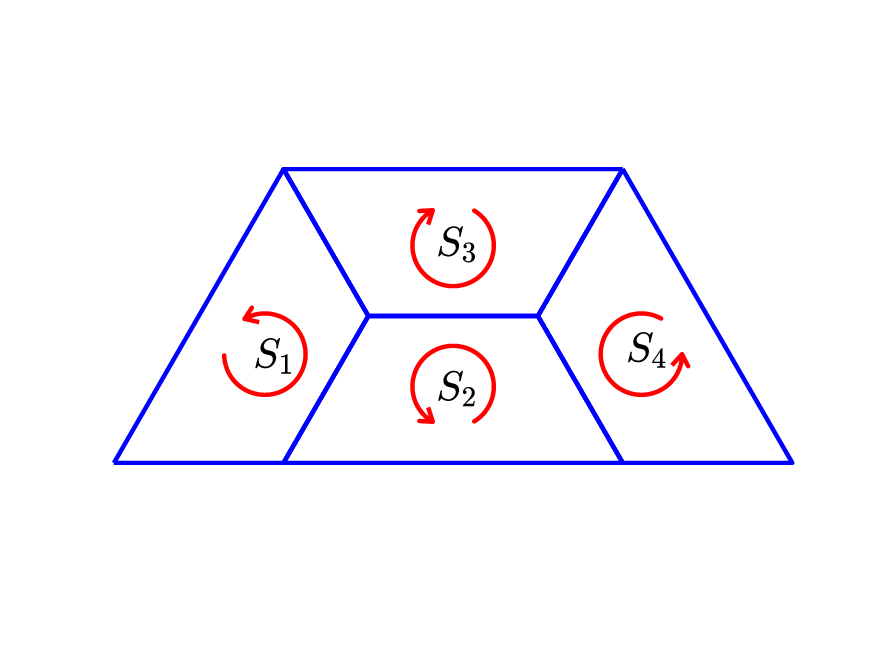}}\\
   \subfigure[\text{A trail from $a_4$ to $a_2$.}]
   { \includegraphics[width=0.35\textwidth]{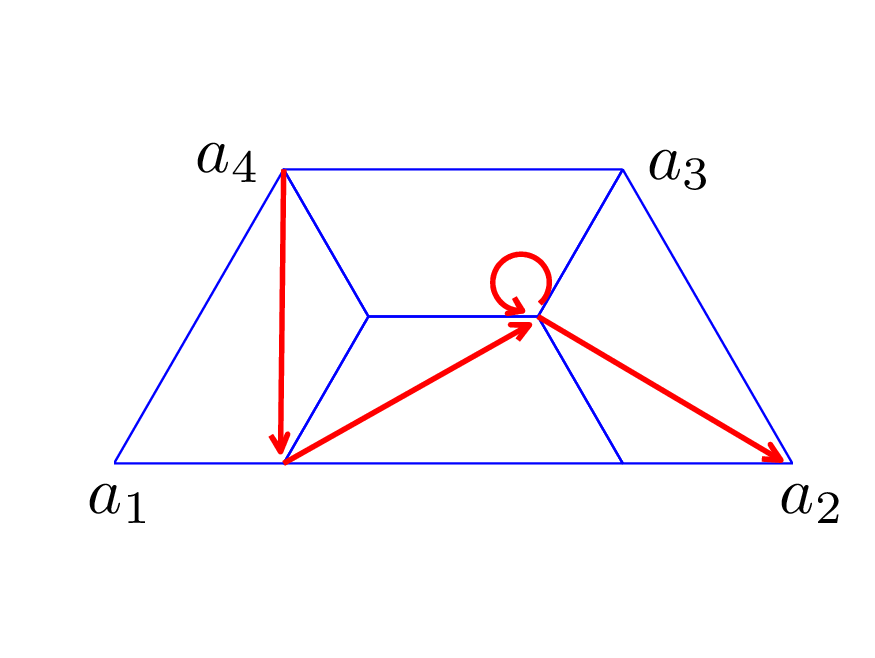}}
    \subfigure[\text{A closed trail from $a_4$ to $a_4$.}]
  {\includegraphics[width=0.35\textwidth]{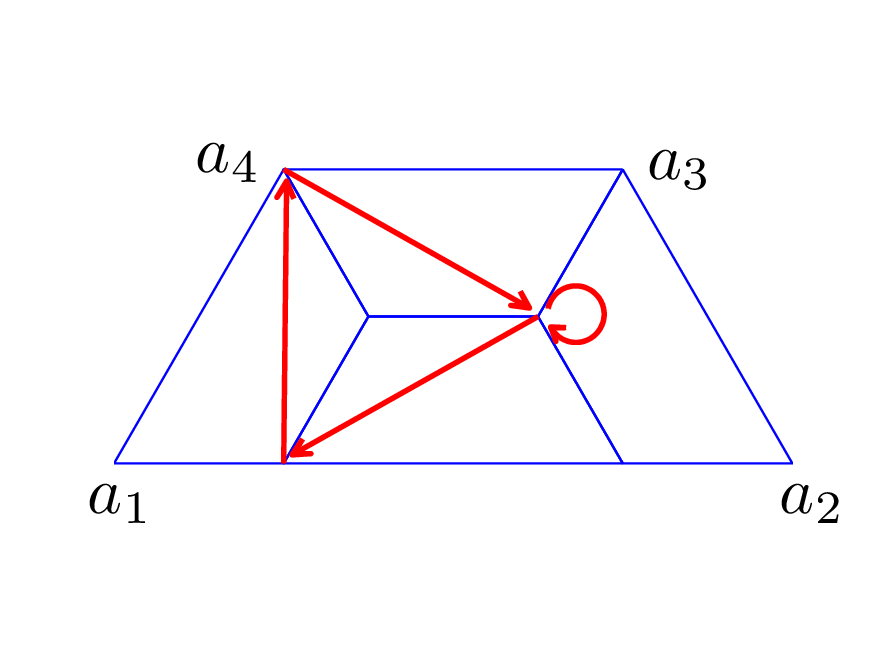}}
  \caption{}\label{Wedge-2}
\end{figure}

  \begin{example}\label{ex-Hexa} \textbf{Hexaflake.}
{\rm
 Hexaflake is a fractal constructed by iteratively exchanging each hexagon by a flake of seven hexagons, see Figure \ref{Hexaflake2} (a). We choose $A$ to be the vertex set of the original hexagon, and choose the initial graph to be
$$
V=\{\overrightarrow{a_ia_j}; ~i\neq j \}.
$$
Figure \ref{Hexaflake2}~(d)(e)(f) show the trails for $\overrightarrow{a_4a_1}$, $\overrightarrow{a_4a_5}$ and $\overrightarrow{a_4a_6}$. For any $\overrightarrow{a_ia_j}$, there is a similitude which maps
one of $\overrightarrow{a_4a_1}$, $\overrightarrow{a_4a_5}$ and $\overrightarrow{a_4a_6}$ to $\overrightarrow{a_ia_j}$, and hence this similitude induces a trail which we
define to be  $\tau(\overrightarrow{a_ia_j})$.
  The rule  $\tau$ satisfies the condition of Theorem \ref{thm:travel}.  Figure \ref{Hexaflake2}(b) gives a visualization of the   corresponding SFC.

 A more popular   sFC  of Hexaflake is coustructed by   the positive Euler-tour method, see Figure \ref{Hexaflake2}(c).
}

\end{example}


\section{\textbf{Positive Euler-tour method}}\label{sec:positive}
Another frequently used method of constructing SFC is the positive Euler-tour method.
   Let  $A=\{a_1,\dots, a_m\}$ be a skeleton. Define
 \begin{equation}\label{eq:cycle}
\Lambda=\Lambda_A:=\overrightarrow{a_1a_2}+\cdots +\overrightarrow{a_{m-1}a_m}+\overrightarrow{a_ma_1}
\end{equation}
 to be the cycle passing  $a_1,\dots,a_m$ in turn. We denote the edge set of $\Lambda$ by
 $$V^+=\{\overrightarrow{a_1a_2},\dots, \overrightarrow{a_{m-1}a_m}, \overrightarrow{a_ma_1}\}.$$
 We set $\Lambda$ to be our  initial graph, and let
 $G=\bigcup_{i=1}^N S_j(\Lambda)$ be the refined graph.

\begin{lemma}\label{lem:tour} The refine graph $G$     admits Euler tours.
\end{lemma}

\begin{proof} First, we prove $G$ is connected.
Take two vertices $S_i(a)$ and $S_j(b)$ in $G$, where $a, b\in A$.
Let $S_i=S_{i_0}, S_{i_1},\dots, S_{i_k}=S_j$ be the vertices of a path connecting $S_i$
and $S_j$ in the Hata graph $H(\cS, A)$. Then one can easily prove by induction that the graphs
$$
 S_{i_0}(\Lambda)\cup \cdots \cup S_{i_\ell}(\Lambda),  \quad 0\leq \ell\leq k,
$$
are all connected. Set $\ell=k$, we get that $G$ is connected.

Finally,  $G$ admits Euler tour, since it is a disjoint union
 of closed trails (see \cite{Biggs}).
\end{proof}

  Let $P$ be an \emph{Euler tour}
 of the refined graph $G$.
 Let $P=P_1+\cdots+P_m$ be a partition of $P$ such that that for all $j$,
 $P_j$ is a sub-trail having the same origin and terminus as $v_j$. Then
 $$
 \tau: v_j\mapsto P_j,\quad j=1,2,\dots, m,
 $$
 gives us an edge-to-trail substitution.
 If $P$ and the partition are carefully chosen, then   $\tau$
 will lead to a SFC. The rigorious treatment is given in
   Section \ref{sec:euler}.

\begin{figure}[h]
  \centering
  \subfigure[\text{Initial pattern}]{\includegraphics[width=0.31 \textwidth]{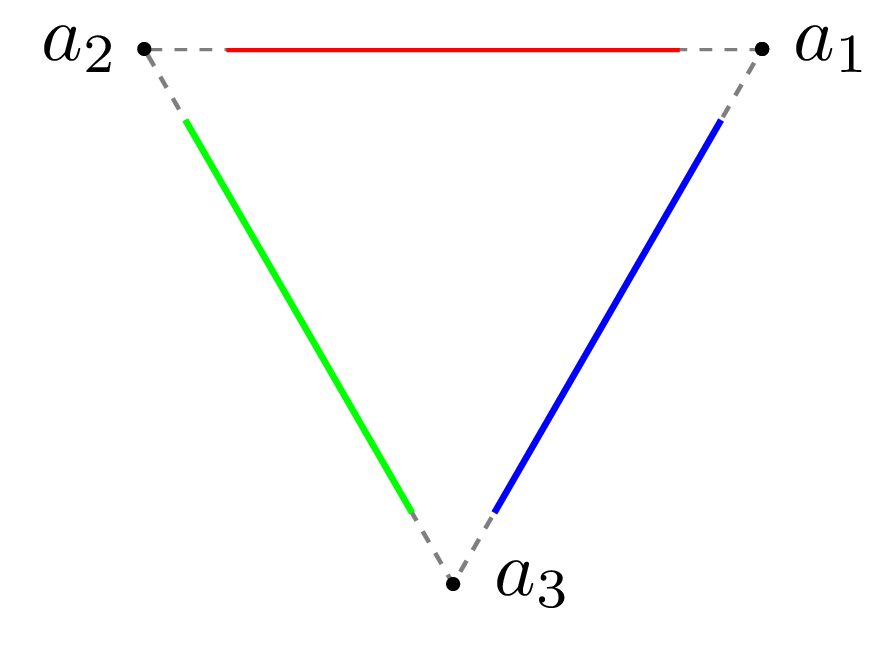}}
  \subfigure[\text{The second iteration.}]{\includegraphics[width=0.35 \textwidth]{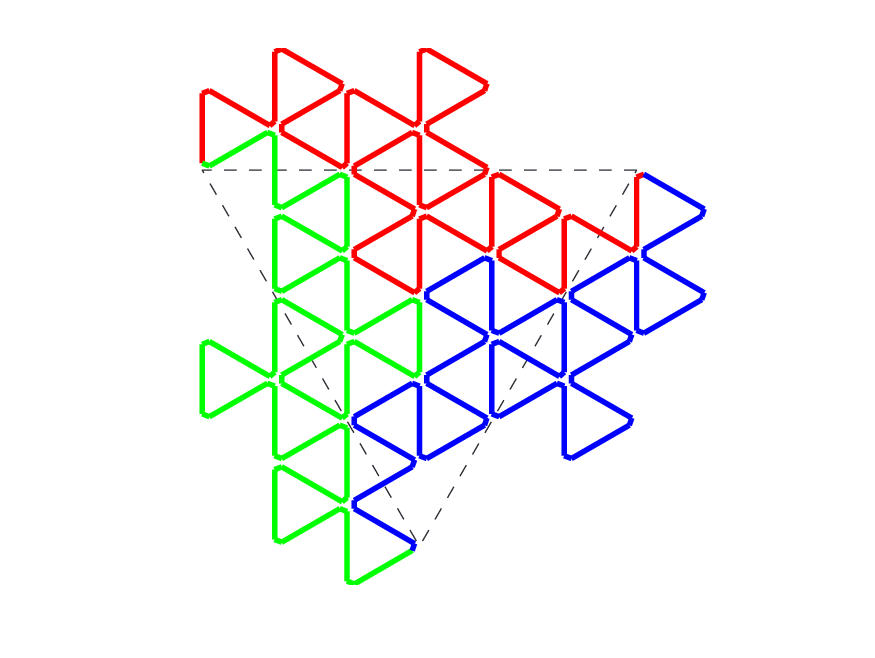}}
  \subfigure[\text{Invariant sets of the GIFS.}]{\includegraphics[width=0.32 \textwidth]{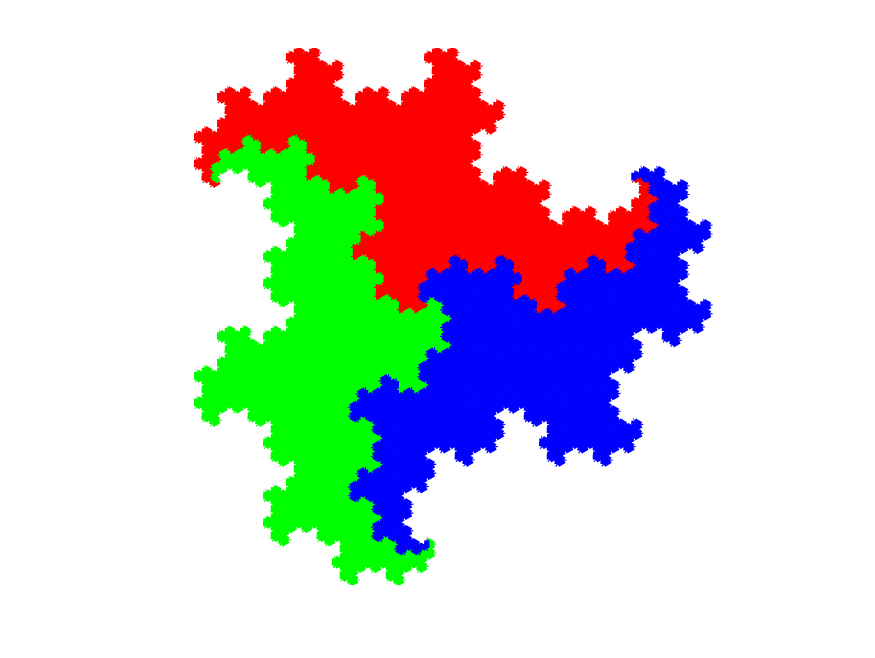}}
  \caption{A space-filling curve of Terdragon.}
  \label{fig:Tdragon:2}
\end{figure}

\begin{example}\label{Terdragon-1} \textbf{Terdragon.} {\rm Terdragon is generated by the IFS
$$\{S_1(z)=\lambda z+1,\ ~S_2(z)=\lambda z+\omega,\ ~S_3(z)=\lambda z+\omega^2\},$$
where $\lambda=\exp(\pi \mi/6)/\sqrt{3}$ and $\omega=\exp(2\pi \mi/3)$. We choose the skeleton $$A=\{a_1,a_2,a_3\}=\{-\omega^2/\lambda,-1/\lambda,-\omega/\lambda\},$$
 which consists of the  fixed points of $S_j, j=1,2,3$.
  Choose the initial graph to be
 the cycle $$
  \Lambda=\overrightarrow{a_1a_2}+\overrightarrow{a_2a_3}+\overrightarrow{a_3a_1}:=v_1+v_2+v_3
  $$
  (see Figure \ref{Tdragon}(b)).  The refined graph
  $G=S_1(\Lambda)\cup S_2(\Lambda)\cup S_3(\Lambda)$
  is the graph in Figure \ref{Tdragon}(c).
  According to the of the Euler-partition   in Figure \ref{Tdragon}(d), we obtain the following edge-to-trail substitution:
\begin{equation}\label{T-rule}
\tau: \left \{
\begin{array}{l}
v_1\mapsto P_{v_1}=S_1(v_1)+S_2(v_3)+S_2(v_1),\\
v_2\mapsto P_{v_2}=S_2(v_2)+S_3(v_1)+S_3(v_2), \\
v_3\mapsto P_{v_3}=S_3(v_3)+S_1(v_2)+S_1(v_3).
\end{array}
\right .
\end{equation}
}
\end{example}

\begin{remark}
{\rm
We shall see in Section \ref{sec:euler} that SFCs obtained  by the Euler-tour method are  all  closed curves, and the invariant sets of the induced GIFS form a partition of the original self-similar set $K$.
}
\end{remark}

\begin{example}\label{EX:carpet}
 \textbf{Sierpi\'nski carpet.}  {\rm
 We choose the skeleton to be  the  middle points of the edges of the unit square, see Figure \ref{fig-carpet} (right).
   An Euler tour of the refine graph is shown in Figure \ref{carpet000} (b), and
    a partition of this Euler tour  is indicated by  colors. The edge-to-trail substitution  is
    indicated by Figure \ref{carpet000} (a) and (d). Figure \ref{carpet000} (f) shows the invariants of GIFS associated with the edge-to-trail substitution, which form a partition of the carpet.
     (Dekking \cite{Dekking82} gives a certain `parametrization' of the  carpet where there are infinitely many jumps.)

}
\end{example}

%
%
\subsection{A stronger version of positive Euler tour method.}
 The initial cycle $\Lambda$  can be
 chosen to be any closed trail passing all the elements of $A$, say,
 $$
 \Lambda=v_1+\cdots +v_h.
 $$
 Still we can define refined graph, Euler-tour and its partition, and edge-to-trial substitution rule.
 In the following example, to make the visualizations self-avoid, we
  chooses a special closed trial $\Lambda$.

\begin{example}\label{Rocket2}
\textbf{ The Rocket Tile.} {\rm
The Rocket tile  is generated by the IFS $\{S_j\}_{j=1}^9=\{(x+d)/3; {d\in D}\}$, where
$$D=\{-2,-1,-1+\mi,0,-\mi,-2\mi,1-\mi,1+\mi,2+2\mi\}.$$
See Figure \ref{Rocket}$(a)$.
It is easy to check that $A=\{a_1, a_2, a_3, a_4\}=\{0, -1, -\mi, 1+\mi\}$ is a skeleton.
 (The $a_j$'s  are  fixed points of $S_4, S_1, S_6 \text{ and } S_9$ respectively.)

 We choose the initial graph to be the closed trail
$$
V=\overrightarrow{a_1a_2}+\overrightarrow{a_2a_1}+\overrightarrow{a_1a_3}+\overrightarrow{a_3a_1}+\overrightarrow{a_1a_4}+
\overrightarrow{a_4a_1},
$$
see Figure \ref{Rocket}$(b)$. The refined graph of $V$ contains $54$ edges.
 An Euler tour of the refined graph is indicated by  Figure \ref{Rocket}$(c)$, where
    a partition of this tour is indicated by $6$ different colors.
    The edge-to-trail substitution  is indicated in  Figure \ref{Rocket}$(b)$ and $(e)$.

}
\begin{figure}
\subfigure[\text{Rocket.}]{\includegraphics[width=.35 \textwidth]{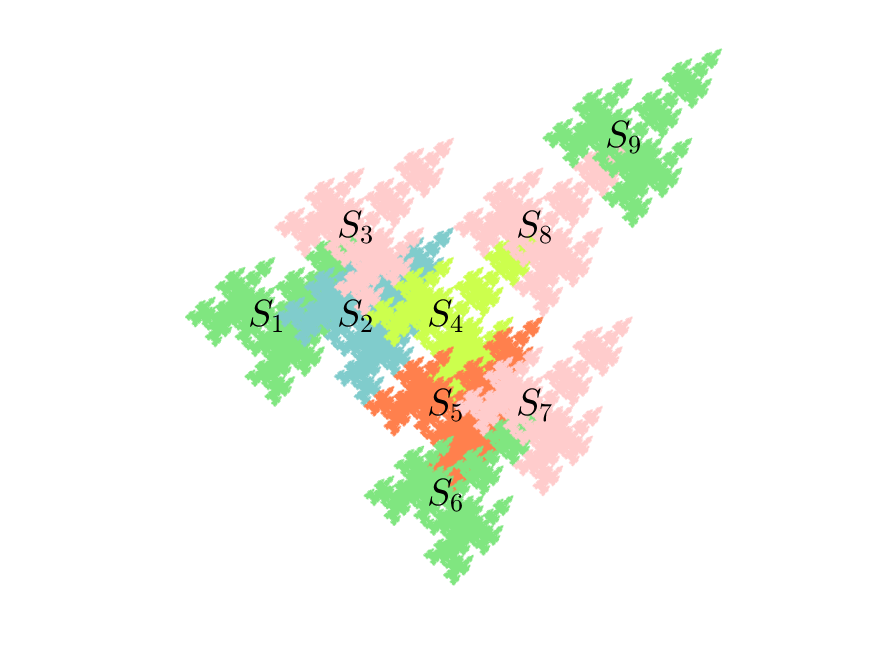}}
\subfigure[\text{Initial graph.}]{\includegraphics[width=.3 \textwidth]{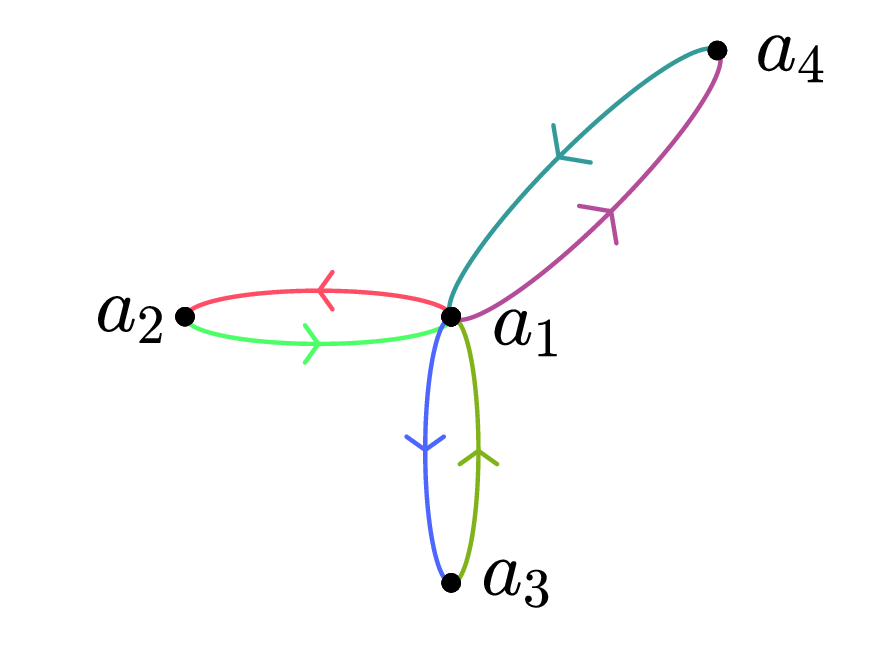}}
\subfigure[\text{Euler tour.}]{\includegraphics[width=.32 \textwidth]{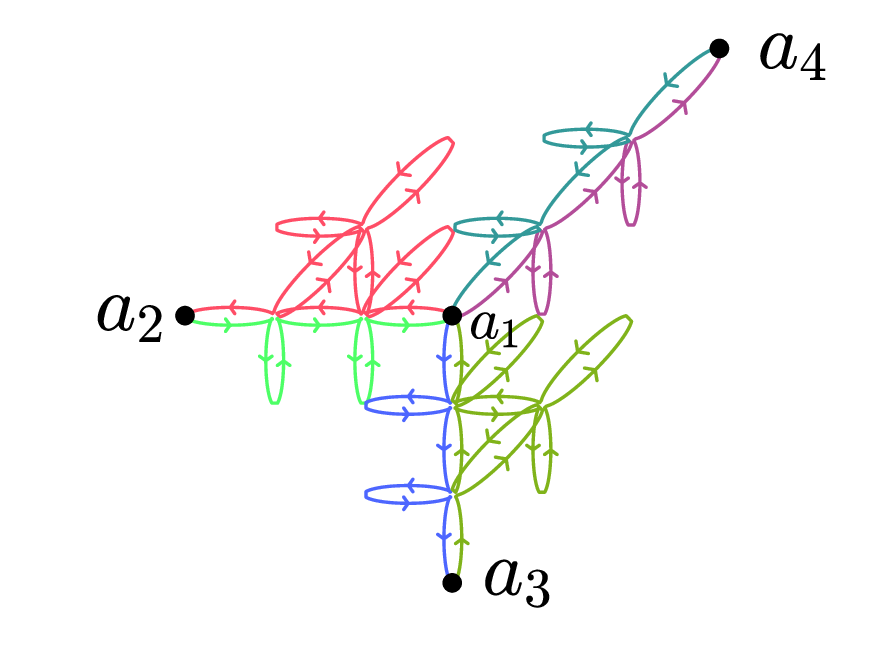}}\\
  \subfigure[\text{Initial patterns.}]{\includegraphics[width=.32 \textwidth]{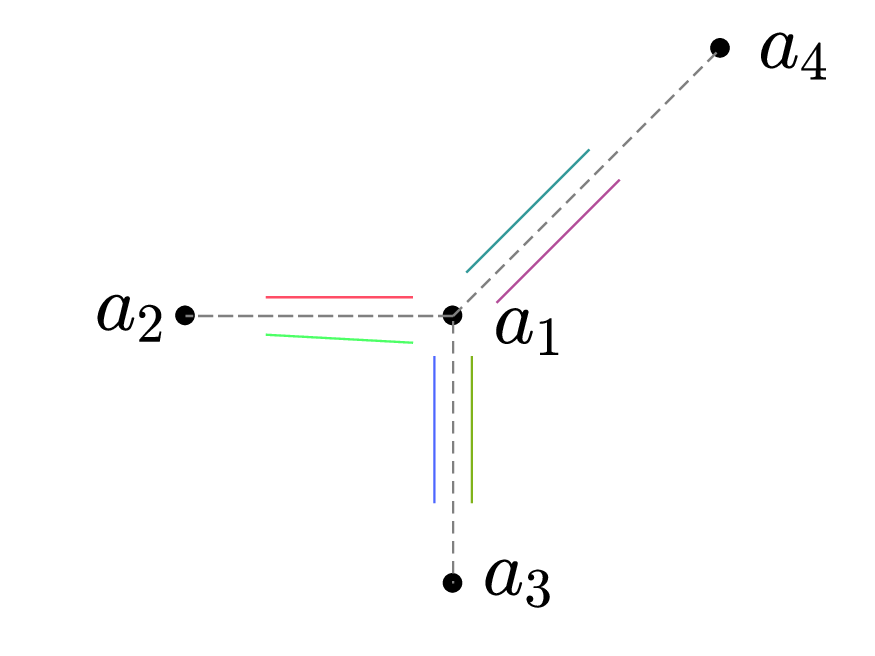}}
  \subfigure[\text{The first interation.}]{\includegraphics[width=.3\textwidth]{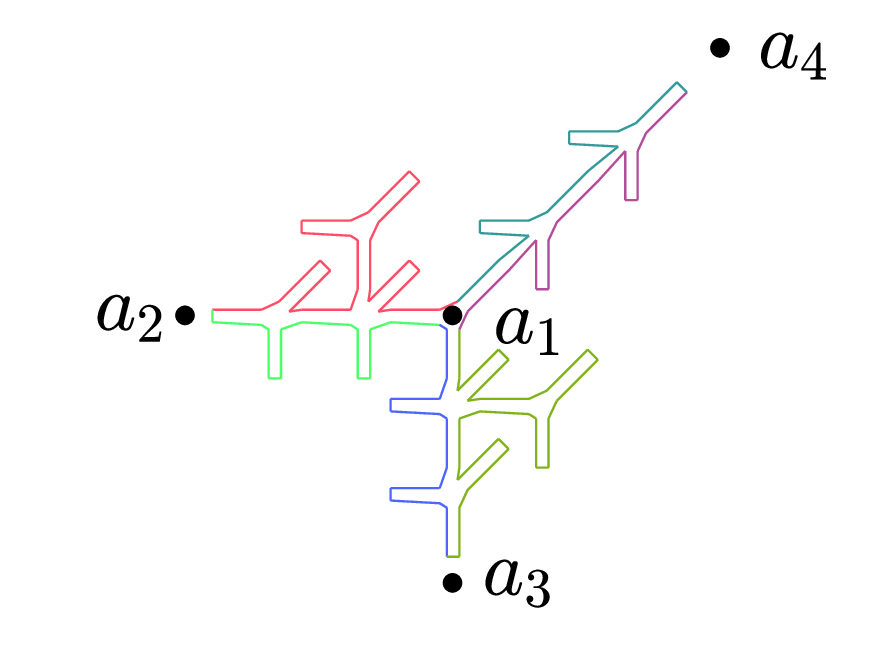}}
  \subfigure[\text{Invariant sets of the GIFS.}]{\includegraphics[width=.35\textwidth]{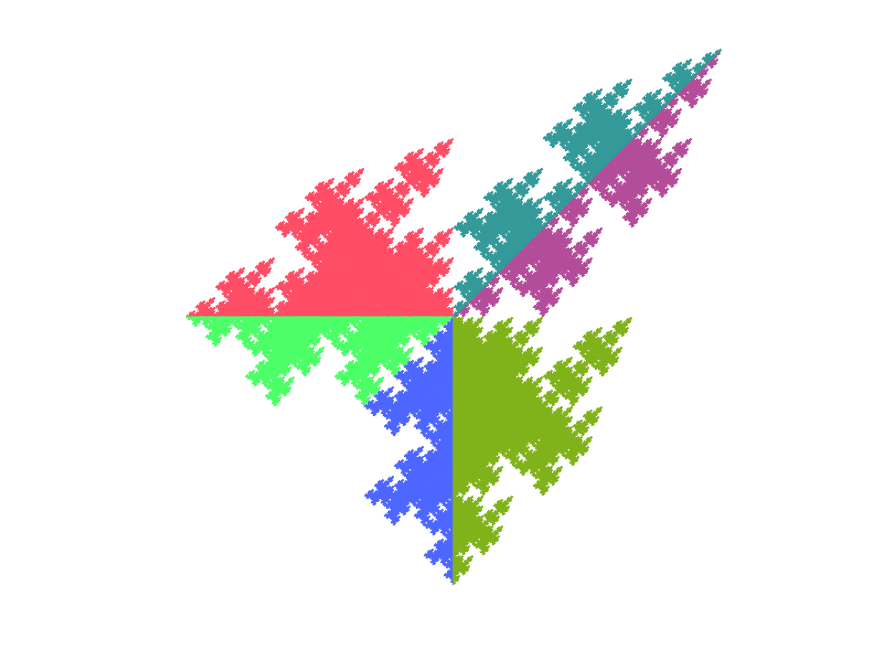}}\\
  \caption{A space-filling curve of the Rocket tile.}\label{Rocket}
\end{figure}
\end{example}


\section{\textbf{Linear GIFS}}\label{sec:linear}

In this section, we recall the definition of the linear GIFS introduced in \cite{RaoZh15}. 

\subsection{GIFS}
Let $G=(\mathcal{A},\Gamma)$ be a directed graph with vertex set $\mathcal{A}$ and edge set $\Gamma$. Let
 \begin{equation}\label{GIFS}
 {\mathcal G}=\left (g_{\boldsymbol \gamma}:\mathbb{R}^d\rightarrow \mathbb{R}^d \right )_{{\boldsymbol \gamma}\in \Gamma}
 \end{equation}
 be a family of similitudes. We call the triple $(\mathcal{A},\Gamma,{\mathcal G})$, or simply ${\mathcal G}$,  a \emph{graph-directed iterated function system} (GIFS).
 We  call $({\mathcal A}, \Gamma)$ the \emph{base graph} of the GIFS. In what follows, we shall call  ${\mathcal A}$ a \emph{state set} instead of  vertex set, to avoid confusion
 with  other graphs. Very often but not always,  we set ${\mathcal A}$ to be $\{1,\dots, N\}$.

Let $\Gamma_{ij}$ be the set of edges from state $i$ to $j$. It is well known that there exist unique non-empty compact sets $\{E_i\}_{i=1}^N$ satisfying
\begin{equation}\label{uniset}
E_i=\bigcup_{j=1}^N\bigcup_{{\boldsymbol \gamma}\in \Gamma_{ij}}g_{{\boldsymbol \gamma}}(E_j),\quad 1\leq i\leq N.
\end{equation}
We call $\{E_i\}_{i=1}^N$ the \emph{invariant sets} of the GIFS \cite{MW88}.
 The set equations \eqref{uniset} provide an alternative  way to define a GIFS.  We shall call \eqref{uniset} the \emph{set equation form} of  GIFS \eqref{GIFS}.

\subsection{Symbolic space related to a graph $G$}
 A sequence of edges in $G$, denoted by $\bomega=\omega_1\omega_2\dots\omega_n$, is called a \emph{walk}, if the
  terminus of $\omega_i$ coincides with the origin of  $\omega_{i+1}$.
  (Here we don't use the notation $\omega_1+\cdots+\omega_n$ for simplicity.)
We will use the following notations to for the sets of finite or infinite walks on $G=(\mathcal{A},\Gamma)$. For $i\in \mathcal{A}$, let
 $$
 \Gamma_i^k,\ \Gamma_i^\ast \text{ and } \Gamma_{i}^{\infty}
 $$
  be the set of
 all walks with length $k$, the set of all walks with finite length, and the set of all infinite walks,
  emanating  from the state $i$, respectively.
Note that $\Gamma_i^\ast=\bigcup_{k\geq 1}\Gamma_i^k$.

For an infinite walk ${\boldsymbol \omega}=(\omega_n)_{n=1}^\infty\in \Gamma_i^\infty$,
we set ${\boldsymbol \omega}|_k=\omega_1\omega_2\dots\omega_k$ and call
$$[\omega_1\dots\omega_n]:=\{{\boldsymbol \gamma}\in \Gamma_i^\infty;~~{\boldsymbol \gamma}|_n=\omega_1\dots\omega_n\}$$
the \emph{cylinder} associated with $\omega_1\dots \omega_n$.
For a walk ${\boldsymbol \gamma}=\gamma_1\dots \gamma_n$, we denote
$$
E_{\boldsymbol \gamma}:=g_{\gamma_1}\circ\cdots \circ g_{\gamma_n}(E_{t(\bgamma)}),
$$
where $t(\bgamma)$ denotes the terminus of the walk $\boldsymbol \gamma$ (and $\gamma_n$). Iterating \eqref{uniset} $k$-times, we obtain
\begin{equation}\label{uni-set3}
E_i=\bigcup_{{\boldsymbol \gamma}\in \Gamma_i^k}E_{\boldsymbol \gamma}.
\end{equation}

We define a projection $\pi: (\Gamma_1^{\infty},\dots,\Gamma_N^{\infty}) \rightarrow (\mathbb{R}^d,\dots,\mathbb{R}^d)$, where $\pi_i: \Gamma_i^{\infty} \rightarrow \mathbb{R}^d$ is defined by
\begin{equation}\label{eq-projection}
\{\pi_i({\boldsymbol \omega})\}:=\bigcap_{n\geq 1}E_{\boldsymbol {\omega|_{n}}}.
\end{equation}
 For $x\in E_i$, we call ${\boldsymbol \omega}$ a coding of $x$ if $\pi_i({\boldsymbol \omega})=x$. It is folklore that $\pi_i(\Gamma_{i}^{\infty})=E_i$.

\subsection{\textbf{Order GIFS and linear GIFS} (\cite{Akiyama10,RaoZh15})}
Let $({\mathcal{A}},\Gamma,\mathcal{G})$ be a GIFS. To study the `advanced' connectivity property of the invariant sets,
 we equip a partial order on the edge set $\Gamma$ enlightened by set equation \eqref{uni-set3}.
Let $\Gamma_i=\Gamma_i^1$ be the set of edges emanating from the state $i$.
\begin{defi}
{\rm

We call the quadruple $({\mathcal{A}},\Gamma, \mathcal{G}, \prec)$ an \emph{ordered GIFS},
if  $\prec$ is a partial order on $\Gamma$ such that
\begin{enumerate}
  \item[($i$)] $\prec$ is a linear order when restricted on $\Gamma_j$ for every $j\in {\mathcal A}$;
  \item[($ii$)]  elements in $\Gamma_i \text{ and } \Gamma_j$ are not comparable if  $i \neq j$.
\end{enumerate}
}
\end{defi}

The order $\prec$ can be extend to $\Gamma_i^n$ and $\Gamma_i^\infty$ for every $i\in{\mathcal A}$.
On $\Gamma_i^n$, two elements
$\gamma_1\gamma_2\dots \gamma_k \prec \omega_1\omega_2\dots \omega_k$
if and only if $ \gamma_1\dots \gamma_{\ell-1} = \omega_1\dots \omega_{\ell-1}$ and $\gamma_\ell\prec \omega_\ell$ for some $1\leq \ell \leq k$.
  Observe that $(\Gamma_i^n, \prec)$ is a linear order. In the same manner, we can obtain a linear order of $\Gamma_i^\infty$, which we still denote by $\prec$.
  In the following, we say $\bomega$ is \emph{lower} than $\bgamma$ if  $\bomega\prec \bgamma$. Two walks $\bomega\prec \bgamma$ in $\Gamma_i^n$ are
  said to be \emph{adjacent} if there is no walk $\bbeta$ such that $\bomega\prec\bbeta\prec\bgamma$.

\begin{defi}{\rm Let $({\mathcal{A}},\Gamma,\mathcal{G},\prec)$ be an ordered GIFS with invariant sets $\{E_i\}_{i=1}^N$.
 It is termed a \emph{linear} GIFS,  if for all $i\in {\mathcal A}$ and $k\geq 1$,
 $$
 E_{\boldsymbol \gamma}\cap E_{\boldsymbol \omega}\neq \emptyset
 $$
  provided ${\boldsymbol \gamma},{\boldsymbol \omega}$ are adjacent walks in
  $ \Gamma_{i}^k$.}
\end{defi}

\subsection{Chain condition} Let $(\mathcal{A},\Gamma,{\cal G}, \prec)$ be an ordered GIFS. %
Fix $i\in \mathcal{A}$.  Let $\bomega$ be an edge with emanating from state $i$. Let
$${\mathbf \delta}(\bomega) \text{ and } {\mathbf \Delta}(\bomega)$$
be the lowest and highest walks in $\Gamma_i^\infty$ initialled by the edge $\bomega$, respectively.

\begin{defi}{\rm
An ordered GIFS is said to satisfy the \emph{chain condition}, if for any $i\in {\mathcal A}$, and any two adjacent edges
$\bomega, \bgamma\in \Gamma_i$ with $\bomega \prec \bgamma$, it holds that
$$ \pi_i({\mathbf \Delta}(\bomega))=\pi_i({\mathbf \delta}(\bgamma)).$$
}
\end{defi}

The chain condition provides a simple criterion for linear GIFS.

\begin{theorem}\label{G_chain} (\cite{RaoZh15})
 An ordered GIFS is a linear GIFS if and only if it satisfies the chain condition.
\end{theorem}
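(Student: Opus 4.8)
The plan is to show the two implications separately, using the dictionary-order structure on $\Gamma_i^k$ and the self-referential nature of the invariant sets.

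First I would prove that the chain condition implies the linear GIFS property. Fix $i\in\mathcal A$ and $k\geq 1$, and let $\bgamma,\bomega\in\Gamma_i^k$ be adjacent in the dictionary order with $\bgamma\prec\bomega$. Write $\bgamma=\gamma_1\cdots\gamma_k$ and $\bomega=\omega_1\cdots\omega_k$. Since they are adjacent, there is a largest index $\ell$ with $\gamma_1\cdots\gamma_{\ell-1}=\omega_1\cdots\omega_{\ell-1}$ and $\gamma_\ell\prec\omega_\ell$; moreover adjacency forces $\gamma_\ell,\omega_\ell$ to be \emph{adjacent} edges in $\Gamma_{s}$ where $s$ is the common terminate state of $\gamma_1\cdots\gamma_{\ell-1}$, the tail $\gamma_{\ell+1}\cdots\gamma_k$ must be the highest path in $\Gamma_{t(\gamma_\ell)}^{k-\ell}$, and the tail $\omega_{\ell+1}\cdots\omega_k$ must be the lowest path in $\Gamma_{t(\omega_\ell)}^{k-\ell}$. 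This last step is where I expect the routine-but-delicate bookkeeping to live: one has to argue that if the tail of $\bgamma$ were not the highest, or the tail of $\bomega$ not the lowest, one could insert a path strictly between $\bgamma$ and $\bomega$, contradicting adjacency. Granting this, let $g=g_{\gamma_1}\circ\cdots\circ g_{\gamma_{\ell-1}}$. By definition the highest path in $\Gamma_{t(\gamma_\ell)}^{k-\ell}$ projects (in the limit, and a fortiori its image under $g_{\gamma_\ell}$) to a set containing $g_{\gamma_\ell}(\text{tail of }E_{t(\gamma_\ell)})$; similarly the lowest path in $\Gamma_{t(\omega_\ell)}^{k-\ell}$ gives a set containing $g_{\omega_\ell}(\text{head of }E_{t(\omega_\ell)})$. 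More precisely, $E_{\bgamma}\ni g(g_{\gamma_\ell}(\text{tail of }E_{t(\gamma_\ell)}))$ and $E_{\bomega}\ni g(g_{\omega_\ell}(\text{head of }E_{t(\omega_\ell)}))$, because the head (resp.\ tail) of an invariant set lies in every $E_{\bdeta}$ along the lowest (resp.\ highest) path. The chain condition, applied to the adjacent pair $\gamma_\ell\prec\omega_\ell$ in $\Gamma_s$, says exactly $g_{\gamma_\ell}(\text{tail of }E_{t(\gamma_\ell)})=g_{\omega_\ell}(\text{head of }E_{t(\omega_\ell)})$, so applying the injection $g$ yields a common point of $E_{\bgamma}$ and $E_{\bomega}$. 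Hence $E_{\bgamma}\cap E_{\bomega}\neq\emptyset$, which is the linear GIFS property.

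For the converse, suppose the GIFS is linear and fix $i\in\mathcal A$ and adjacent edges $\omega\prec\gamma$ in $\Gamma_i$. I want to produce, for each $k$, adjacent paths in $\Gamma_i^{k+1}$ whose intersection pins down the desired point. Take $\bgamma^{(k)}=\omega\cdot(\text{highest path in }\Gamma_{t(\omega)}^{k})$ and $\bomega^{(k)}=\gamma\cdot(\text{lowest path in }\Gamma_{t(\gamma)}^{k})$; by the characterization of adjacency used above (now in the easy direction) these are adjacent in $\Gamma_i^{k+1}$. Linearity gives a point $x_k\in E_{\bgamma^{(k)}}\cap E_{\bomega^{(k)}}$. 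Now $E_{\bgamma^{(k)}}=g_\omega\big(E_{\text{highest path of length }k}\big)$ and these sets decrease, as $k\to\infty$, to $g_\omega(\{\text{tail of }E_{t(\omega)}\})$ since the highest path converges to the tail; similarly $E_{\bomega^{(k)}}\downarrow g_\gamma(\{\text{head of }E_{t(\gamma)}\})$. By compactness the nested nonempty intersections $E_{\bgamma^{(k)}}\cap E_{\bomega^{(k)}}$ have a common point, which must therefore equal both $g_\omega(\text{tail of }E_{t(\omega)})$ and $g_\gamma(\text{head of }E_{t(\gamma)})$. This is precisely the chain condition.

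The main obstacle, as flagged, is purely combinatorial: a clean lemma stating that two length-$k$ paths are adjacent in the dictionary order \emph{if and only if} they agree on a prefix, branch into two adjacent edges at some position $\ell$, and thereafter follow the highest path on one side and the lowest path on the other. Everything analytic — using that heads and tails lie in all the relevant cylinder sets, that the sets $E_{\bomega|_n}$ are nested with diameters shrinking to zero, and that finite intersections of compacts pass to the limit — is then routine. I would isolate that combinatorial fact as a preliminary sublemma (or fold it into the proof of Theorem \ref{G_chain}) and invoke \eqref{uni-set3} and \eqref{eq-projection} freely.
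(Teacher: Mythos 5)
The paper itself states Theorem \ref{G_chain} without proof (it is quoted from the companion paper \cite{RaoZh15}), so there is no in-paper argument to compare against; judged on its own, your proof is correct and essentially complete. The whole weight rests on the combinatorial lemma you isolate --- two paths in $\Gamma_i^k$ are adjacent in the dictionary order if and only if they share a prefix, branch at some position $\ell$ into two \emph{adjacent} edges of $\Gamma_s$, and then follow the highest path on the lower branch and the lowest path on the higher branch --- and that lemma does hold, by exactly the ``insert a path strictly in between'' argument you sketch (one only needs that every state has an outgoing edge, which is forced by the invariant sets being non-empty, and that restrictions of highest/lowest paths are again highest/lowest, which is immediate from the dictionary order). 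Given the lemma, the forward direction is the chain condition pushed forward by the injective map $g_{\gamma_1}\circ\cdots\circ g_{\gamma_{\ell-1}}$, and the converse is the nested-compacts argument with diameters of $E_{\bomega|_n}$ shrinking to zero; both are carried out correctly. If you write this up, I would state the adjacency lemma as a separate sublemma as you suggest, since it is the only non-routine step.
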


\subsection{Proof of Theorem \ref{lem:zipper}.}
Let $\{S_j\}_{j=1}^N$ be a self-similar zipper with vertices $\{x_0, \dots, x_N\}$ and
signature $\{\beta_1,\dots, \beta_N\}$. We define the following two-state GIFS:
\begin{equation}\label{zip}
\left \{
\begin{array}{l}
E_1=S_1(E_{\beta_1})+\cdots+S_N(E_{\beta_N}),\\
E_{-1}=S_N(E_{-\beta_N})+\cdots+S_1(E_{-\beta_1}).\\
\end{array}
\right .
\end{equation}
Clearly $E_1=E_{-1}=K$ are the invariant sets of the above ordered GIFS, where $K$ is the invariant set of $\{S_j\}_{j=1}^N$.

Let $\pi:\{1,\dots, N\}\to K$ be the projection map associated with the IFS $\{S_j\}_{j=1}^N$.
Let $\pi_1:\Gamma_1\to E_1$ and $\pi_{-1}:\Gamma_{-1}\to E_{-1}$ be the projections associated with $E_1$ and $E_{-1}$ defined as
\eqref{eq-projection}.

Let $\mathbf \delta$ and $\mathbf \Delta$ be the lowest path and highest path emanating from the state $E_1$, respectively.
The form of $\mathbf \delta$ and $\mathbf \Delta$ are completely determined by $\beta_1$ and $\beta_N$.
We claim that
$$\pi_1(\mathbf \delta)=x_0, \ \pi_1(\mathbf \Delta)=x_N.$$

If $\beta_1=1$,  then the lowest edge emanating from $E_1$ is $\omega=(E_1, 1, S_1, E_1)$, where the components of $\omega$ means
the initial state, the order, the contraction map, and the terminal state, respectively;
hence
 $\delta=(E_1, 1, S_1, E_1)^\infty$ and the contraction maps associated with $\delta$  is $(S_1)^\infty$.
 On the other hand, the zipper condition implies that $S_1(x_0)=x_0$, so we obtain
 $$\pi_1(\delta)=\pi(1^\infty)=x_0.$$

If $\beta_N=1$, then the highest edge emanating from $E_1$ is $(E_1,N, S_N, E_1)$, hence
 $\delta=(E_1, N, S_N, E_1)^\infty$ and the contraction maps associated with $\Delta$  is $(S_N)^\infty$.
 On the other hand, the zipper condition implies that $S_N(x_N)=x_N$, so we obtain
 $$\pi_1(\Delta)=\pi(N^\infty)=x_N.$$

 A similar calculation as above shows that,
 $$
 \pi_1(\delta)=\pi(1N^\infty)=x_0, \  \pi_1(\Delta)=\pi(N^\infty)=x_N \quad \text{ if } \beta_1=-1, \beta_N=1;
 $$
  $$
 \pi_1(\delta)=\pi(1^\infty)=x_0, \  \pi_1(\Delta)=\pi(N1^\infty)=x_N \quad \text{ if } \beta_1=1, \beta_N=-1;
 $$
  $$
 \pi_1(\delta)=\pi((1N)^\infty)=x_0, \  \pi_1(\Delta)=\pi((N1)^\infty)=x_N \quad \text{ if } \beta_1=-1, \beta_N=-1.
 $$
 Our claim is proved.

 Let ${\mathbf \delta}'$ and ${\mathbf \Delta}'$ be the lowest path and highest path emanating from the state $E_{-1}$, respectively.
 Similarly, we have  $\pi_{-1}({\mathbf \delta}')=x_N, \ \pi_{-1}({\mathbf \Delta}')=x_0.$

 Finally, notice that the zipper condition is exactly the chain condition, so \eqref{zip} is a linear GIFS.

 The other side is easy to prove.
$\Box$

\begin{remark}{\rm Let $(S_j)_{j=1}^N$ be an ordered IFS. One can easy calculate the four possibilities of
the pair $\{x_0,x_N\}$ in the above proof.  Clearly $(S_j)_{j=1}^N$ is a zipper if and only if
there exists $(\beta_1,\dots, \beta_N)\in \{-1,1\}^N$ such that
$$
S_1(\overrightarrow{x_0x_N}^{\beta_1})+\cdots+S_N(\overrightarrow{x_0x_N}^{\beta_N})
$$
is a broken line.}
\end{remark}

\section{\textbf{Induced GIFS}}\label{sec:induce}

In this section, we define the induced GIFS of edge-to-trail substitutions, especially we show that they are linear GIFS.

   Let $\tau: u\mapsto P_u, u\in V$ be an edge-to-trail substitution  defined in Section \ref{sec:rule}.
The trail $P_u$ can be written as
  \begin{equation}\label{eq:induce1}
 P_u= S_{u,1}(v_{u,1})+\dots +S_{u,\ell_u}(v_{u,\ell_u}),
  \end{equation}
  where $S_{u,j}\in \mathcal{S}$ and $v_{u,j}\in V$ for $j=1,\dots,\ell_u$.

    According to   $\tau$     we can construct an ordered GIFS as follows.
Replacing $P_u$ by $E_u$ on the left hand side, and replacing $v$ by $E_v$ on the right hand side of \eqref{eq:induce1},  we obtain an ordered GIFS:
\begin{equation}\label{seteq}
E_u= S_{u,1}(E_{v_{u,1}})+ \cdots +S_{u,\ell_u}(E_{v_{u,\ell_u}}), \quad u\in V,
\end{equation}
 which we     the \emph{induced GIFS} of $\tau$.  In an ordered GIFS,  we use $``+"$ to replace the $``\cup"$ in  the set equation to emphasis the order structure.

\begin{example} \textbf{Induced GIFS of Terdragon.} {\rm The edge-to-trail substitution $\tau$  in Example \ref{Terdragon-1} induces the following GIFS:
$$
\left \{
\begin{array}{l}
E_1=  S_1(E_1)+S_2(E_3)+S_2(E_1),\\
E_2=  S_2(E_2)+S_3(E_1)+S_3(E_2), \\
E_3=  S_3(E_3)+S_1(E_2)+S_1(E_3).
\end{array}
\right .
$$
The invariant sets $E_j$ are illustrated in Figure \ref{fig:Tdragon:2}(c).
}
\end{example}

Let
\begin{equation}\label{Graph-form}
(V, \Gamma, \mathcal {G}, \prec)
\end{equation}
 be the basic-graph form of the  ordered GIFS \eqref{seteq}.
 The state set is $V$, which is the edges of the initial graph.
The edge set $\Gamma$ consists of quadruples $(u, S_j, v, k)$, that is,
 if $S_j(v)$ is the $k$-th edge in the trail $P_u$, then we add an edge to $\Gamma$ and denote this edge by
 \begin{equation}\label{eq:edge}
 (u, S_j, v,k)\in  \Gamma.
 \end{equation}
 The contraction associated with this edge is $S_j$.

\begin{theorem}\label{thm:induce}
The induced  GIFS   \eqref{seteq} is a linear GIFS.
\end{theorem}

\begin{proof}
Let $u\in V$.  We denote by  $a_u$ and $b_u$ the origin and the terminus of $u$ as an edge in the
initial graph $\Lambda=(A,V)$. We claim that
the lowest and highest elements in $\Gamma_u^\infty$  are  codings of $a_u$ and $b_u$, respectively.

 Let $S(v)$ be the first edge in $P_u$, then
$\omega=(u,S,v,1)$  is the lowest edge emanating from $u$ in the basic graph $\Gamma$.
It follows that
\begin{equation}\label{comm_point}
a_u=S(a_v).
\end{equation}
Therefore,  if $(\omega_n)_{n=1}^\infty$ is a coding of $a_v$, then
$$\omega (\omega_n)_{n-1}^\infty$$
is a coding of $a_u$. Applying the same argument to $v$, we obtain a coding of $a_u$,
such that the first two edges of this coding is the lowest walk in $\Gamma_u^2$.
Continuing this argument, we conclude the lowest element in $\Gamma_u^\infty$  is a coding of $a_u$.

Similarly, the highest element in $\Gamma_u^\infty$  is a coding of $b_u$.

Now, let
$\omega=(u, S, v, k)$ and $\gamma=(u, T, v',k+1)$ be two consecutive edges in $\Gamma_u$.
This means that $S(v)$ and $T(v')$ are two adjacent edges in $P_u$,
so $S(b_v)=T(a_{v'})$.

On the other hand, write $\Delta(\omega)=\omega(\omega_n)_{n\geq 1}$,
then $(\omega_n)_{n\geq 1}$ is the highest coding in $\Gamma_v$.
So $\pi_v((\omega_n)_{n\geq q})=b_v$ by the claim above, and
$$
\pi_u(\Delta(\omega))=S\circ\pi_v((\omega_n)_{n\geq q})=S(b_v).
$$
Similarly, we have $\pi_u(\delta(\gamma))=T(a_{v'})$.
This  verifies  the chain condition. Therefore, the ordered IFS in consideration  is linear.
\end{proof}

\noindent \textbf{Proof of Theorem \ref{thm:travel}.} Let $\tau$ be an edge-to-trail substitution over
$V$ such that $\tau(u)$ are all traveling trails. Let us denote the induced GIFS of $\tau$ by $(V,\Gamma, {\mathcal G},\prec)$. Let $g_\omega$ be the contraction associated with the edge $\omega$
in $\Gamma$.

 Fix an $u\in V$. The traveling property of $\tau$  implies that
the associated contractions with edges in $\Gamma_u^n$ are exactly the maps $S_I, {I\in \{1,\dots,N\}^n}$, and each maps appears only once. It follows that
$$
\bigcup_{\bomega\in \Gamma_u^n} g_{\bomega}(\{0\})=\bigcup_{I\in \{1,\dots,N\}^n}S_I(\{0\}),
$$
where $g_\bomega=g_{\omega_1}\circ \cdots \circ g_{\omega_n}$ is the contraction associated to the trail $\bomega$.
Taking the limit in Hausdorff metric at both sides, we obtain $E_u=K$.
This proves that the invariant sets of the GIFS are all equal to $K$.

Moveover, if we ignor the order structure, then the induced GIFS
degenerates to the original IFS of $K$, so the induced GIFS satisfies the OSC.
Hence, $K$ admits optimal parameterizations according to Theorem \ref{thm:RaoZh}.
$\Box$


\section{\textbf{(General) Euler-tour method}}\label{sec:euler}

In this section, we introduce the general Euler-tour method.
Recall that $A=\{a_1,\dots,a_m\}$ is a skeleton, and $\Lambda=(A, V^+)$
is the cycle passing all elements of $A$, see Section 5.
For simplicity, we  denote
$$v_j=\overrightarrow{a_ja_{j+1}} \text{ and } v_j^{-1}=\overrightarrow{a_{j+1}a_j}, \quad j=1,\dots,m,$$
where we identify $a_{m+1}$ with $a_1$.
Then
$
\Lambda= v_1+v_2+\cdots+v_m,
$
and we denote
$
\Lambda^{-1}=v_m^{-1}+\cdots +v_1^{-1}
$
to be  the reverse  of $\Lambda$.

Let $\beta=(\beta_1,\beta_2,\cdots,\beta_N)\in \{1,-1\}^N$, and we  call  it an \emph{orientation vector}.
Denote
\begin{equation}\label{induced-G}
G(\mathcal{S},A,\beta)= \bigcup_{j=1}^N S_j(\Lambda^{\beta_j}).
\end{equation}
By the same argument as Lemma \ref{lem:tour}, one can show that
 $G(\mathcal{S},A,\beta)$ admits Euler tour.

\begin{defi}{\rm
 Let $P$ be an Euler tour of $G(\cS, A, \beta)$, and let
 $$P=P_1+\cdots+P_m$$
 be a partition of $P$. We call it an \emph{Euler-partition} of $G(\cS, A_\tau, \beta)$ if
 the initial points of $P_1,\dots,P_m$, denoted by $a_{\tau'(1)},\dots, a_{\tau'(m)}$, is a permutation of $A$.
 We call $\tau'$ the \emph{output} permutation of $P_1+\cdots +P_m$.
 }
\end{defi}


 \begin{figure}[h]
  \centering
  \subfigure[$P=P_1+P_2+P_3$]{\includegraphics[height=3.5 cm]{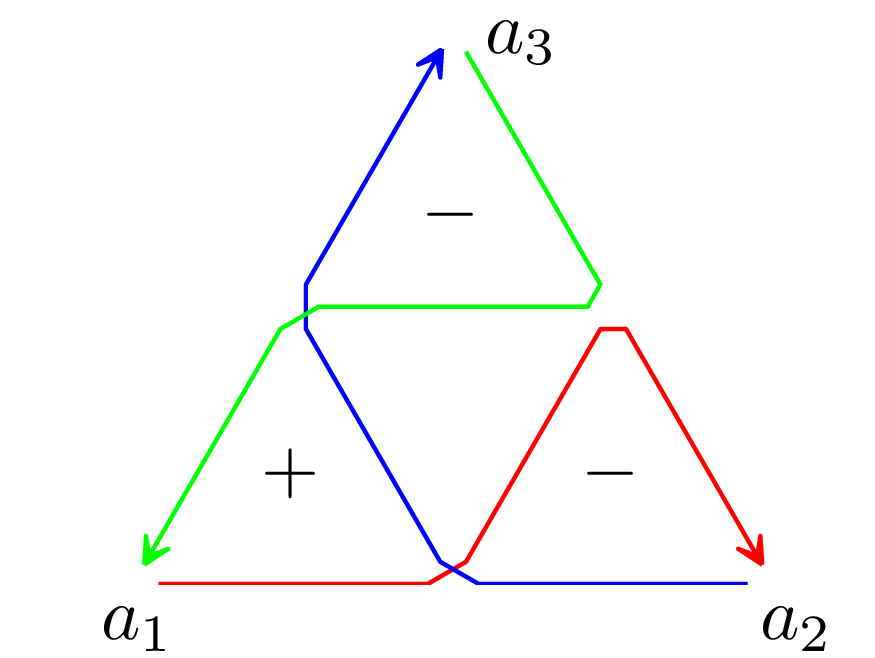}}
   \subfigure[$P^{-1}=P_3^{-1}+P_2^{-1}+P_3^{-1}$]{\includegraphics[height=3.5 cm]{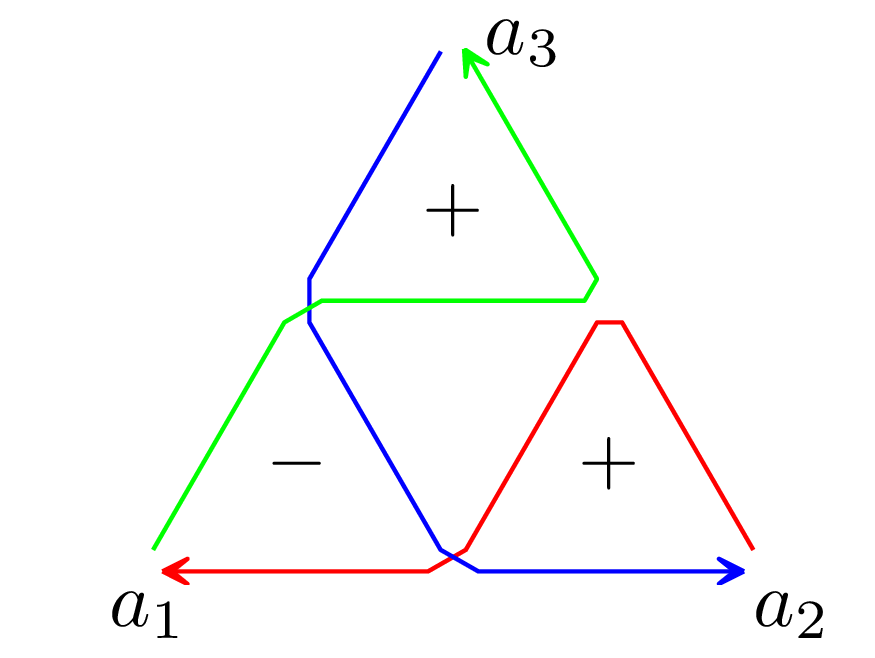}}
   \subfigure[$P^2=P\circ P$]{\includegraphics[width=4.5 cm]{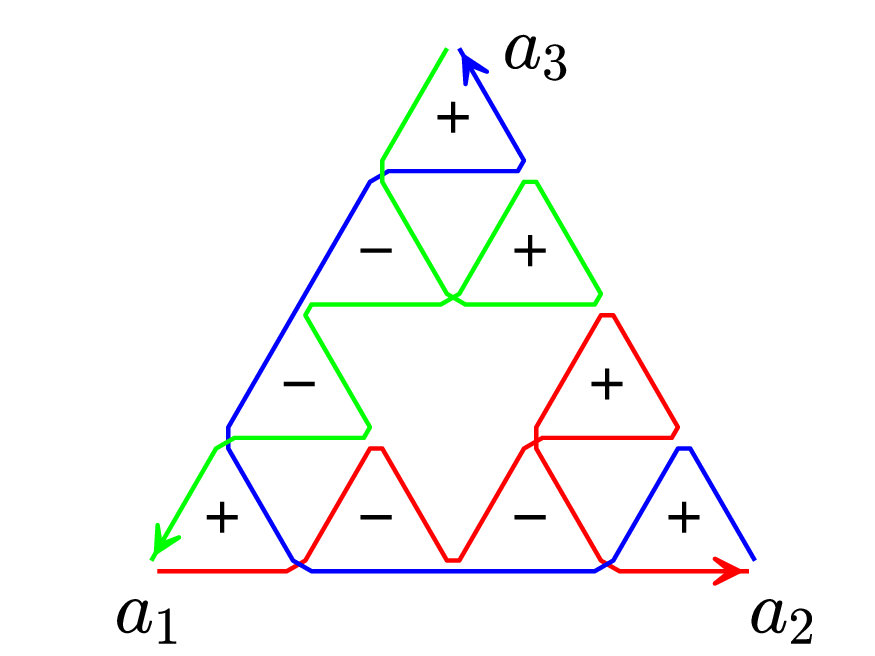}}
  \caption{ $G(\mathcal{S},A,\beta)$ and $G(\mathcal{S},A,-\beta)$.
 }\label{fig:Gasket}
\end{figure}


\subsection{Consistency and induced edge-to-trail substitution}

\begin{defi}{\rm
We say an Euler-partition $P_1+\cdots +P_m$ of $G(\mathcal{S}, A, \beta)$ is  \emph{consistent}, if $P_i$ has the same origin and terminus as $v_i$ for  each $i\in\{1,2,\dots,h\}$,
or equivalently,   its output
permutation is the identity.
  }
\end{defi}

As soon as $P_1+\cdots+P_m$ is  consistent,
 we can define an  edge-to-trail substitution  over
$$
 V=V^+\cup V^{-}=\{v_1,\dots, v_m\}\cup\{v_1^{-1},\dots, v_m^{-1}\}
 $$
as
\begin{equation}\label{eq:path-sub}
\tau: v_j \mapsto P_j \text{ and } v_j^{-1}  \mapsto P_j^{-1}, \quad j=1,\dots, m.
\end{equation}
We call $\tau$ the \emph{induced edge-to-trail substitution} of the Euler-partition $P_1+\cdots+P_m$.

\begin{remark}{\rm
Set $G=\bigcup_{i=1}^N S_i(\Lambda\cup \Lambda^{-1})=G(\mathcal{S},A,\beta) \cup G(\mathcal{S},A,-\beta)$, and we see that  the generalized
Euler-tour method is  a special case of the construction in Section \ref{sec:rule}.
}
\end{remark}

Denote the length of  $P_j$  by $\ell_j$, then  $P_j$  has the form
\begin{equation}\label{P_j}
P_j=S_{j,1}(u_{j,1})+S_{j,2}(u_{j,2})+\cdots +S_{j,\ell_{j}}(u_{j,\ell_{j}}), \quad j=1,\dots, m.
\end{equation}
where  $S_{j,k}\in \{S_1,\dots, S_N\}$, and
 $u_{j,k}\in V$.
Accordingly,
\begin{equation}\label{P_j^{-1}}
P_j^{-1}=S_{j,\ell_j}(u^{-1}_{j,\ell_j})+\cdots+ S_{j,2}(u^{-1}_{j,2})+S_{j,1}(u^{-1}_{j,1}),\quad j=1,\dots, m.
\end{equation}

The substitution \eqref{eq:path-sub} induces the following  linear GIFS (by Theorem
\ref{thm:induce} ):
\begin{equation}\label{setequation}
\left \{
\begin{array}{c}
E_{v_j}=S_{j,1}(E_{u_{j,1}})+ S_{j,2}(E_{u_{j,2}})+ \cdots + S_{j,\ell_j}(E_{u_{j,\ell_j}}), \\
E_{v_j^{-1}}=  S_{j,\ell_j}(E_{u^{-1}_{j,\ell_j}})+ \cdots
+ S_{j,2}(E_{u^{-1}_{j,2}})+ S_{j,1}(E_{u^{-1}_{j,1}}).
\end{array}
\right .
\quad 1\leq j \leq m.
\end{equation}
For simplicity, we   call GIFS \eqref{setequation}
  the \emph{induced GIFS} of the Euler-partition $P_1+\cdots+P_m$.
  We shall use $(V,{\mathcal E}, {\cal G},\prec)$ to denote this GIFS,  as we explained in Section \ref{sec:induce}.

Now we give some basic properties of the induced GIFS.

\begin{lemma}\label{lem:basic}
 (i) $E_{v}=E_{v^{-1}}$ for $v\in V$.

(ii)  For  each   $v\in V$ and $i\in\{1,\dots,N\}$,  $S_i(E_v)$ appears exactly once in the right-hand side of
  \eqref{setequation}.

(iii) $K=\bigcup_{v\in V^+} E_{v}$.
\end{lemma}
\begin{proof}
(i) Set $F_{v}=E_{v^{-1}}$ for $v\in V$. Clearly $\{F_v\}_{v\in V}$ also satisfy the set equation \eqref{setequation} if we ignore the order structure. Hence (i) follows from the uniqueness of the invariant sets.

(ii) Since   $P$ is an Euler tour, for each   $v\in V$ and $i\in\{1,\dots,N\}$, the edge $S_i(v)$ appears exactly once in $P\cup P^{-1}$, which implies (ii).

(iii) Taking the union of both sides of \eqref{setequation} and using the conclusion of (ii), we have
\begin{equation}\label{uni-set2}
\bigcup_{v\in V}E_v=\bigcup_{j=1}^m \bigcup_{k=1}^{l_j}S_{j,k}(E_{u_{j,k}}\cup E_{u_{j,k}^{-1}})=\bigcup_{i=1}^NS_i(\cup_{v\in V}E_{v}).
\end{equation}
Therefore, by the uniqueness of the invariant set of an IFS, we obtain
$$K=\bigcup_{v\in V} E_v=\bigcup_{v\in V^+} E_{v},$$
where the last equality holds since $E_{v}=E_{v^{-1}}$.
\end{proof}

  Lemma \ref{lem:basic} (ii) means that
for each $v\in V$, there are exactly $N$ edges in $\Gamma$ terminating at $v$; moreover,  the associated maps of these edges run over the maps $S_1,\dots, S_N$.

\subsection{Primitivity and the open set condition}
To ensure the induced GIFS satisfies the OSC, we need a primitivity condition.

{Since $E_v=E_{v^{-1}}$,  in the induced GIFS, we can identify $E_v$ and $E_{v^{-1}}$ to simplify the discussion.}
By identifying $v$ and $v^{-1}$ in $\tau$, and ignoring the maps $S_j$ in $\tau(v)$, we define  a morphism over $\{v_1,\dots, v_m\}$   by
\begin{equation}
\tau^*:~~v_j \longrightarrow  |v_{j,1}|~|v_{j,2}|~\dots ~|v_{j,l_j}|,
\quad  j=1,2,\dots,m,
\end{equation}
where  $|v|=|v^{-1}|=v$ for $v\in V^+$.
We call  $\tau^*$ the \emph{induced morphism}.
Recall that $\tau^*$  is \emph{primitive} if there exists an integer $n$ such that for any $i,j\in \{1,2,\dots, m\}$, $v_i$ appears in $(\tau^*)^n(v_j)$;
see for instance \cite{Queff87}.

\begin{defi}{\rm We say a consistent  Euler-partition $P_1+\cdots +P_m$ of $G({\mathcal S},A, \beta)$ is \emph{primitive}, if the
 induced morphism $\tau^*$ is primitive.
 }\end{defi}

\begin{example}{\rm Consider the Sierpi\'nski gasket.
Let $\beta=(1,-1,-1)$.
  Figure \ref{fig:Gasket}(a) illustrates a consistent partition $P_1+P_2+P_3$ , where the trails are indicated by colors.
  The induced edge-to-trail substitution is:
\begin{equation}\label{Path}
\tau: \left \{
\begin{array}{l}
v_1\mapsto P_1= S_1(v_1)+S_2(v_3^{-1})+S_2(v_2^{-1}),\\
v_2\mapsto P_2= S_2(v_1^{-1})+S_1(v_2)+S_3(v_3^{-1}),\\
v_3\mapsto P_3= S_3(v_2^{-1})+S_3(v_1^{-1})+S_1(v_3).\\
\end{array}
\right .
\end{equation}
The induced morphism  is
$$\tau^*: \left \{
\begin{array}{cl}
v_1 & \mapsto   v_1 v_3 v_2,\\
v_2 & \mapsto   v_1 v_2 v_3, \\
v_3 & \mapsto  v_2 v_1 v_3
 \end{array}
 \right .
 $$
 and it is primitive. Figure \ref{fig:Gasket} illustrates $\tau^3(\Lambda)$.
}

\end{example}


For a GIFS $(V, \Gamma, {\cal F})$, let $r_e$ be the contraction ratio of the mapping associated with the edge $e$.
For $s>0$, the Mauldin-Williams matrix  $M(s)$ is defined to be  (\cite{MW88})
\begin{equation}\label{M^s}
\left(
  \begin{array}{c}
    \sum_{e\in \Gamma_{uv}}r_e^s \\
\end{array}
\right)_{u,v\in V}.
\end{equation}
The real number $s$ satisfying $\Phi(M(s))=1$ is called the \emph{similarity dimension}
of the GIFS, where $\Phi(\cdot)$ denotes the spectral radius of a matrix.

Let ${\mathcal H}^s$ be the $s$-dimensional Hausdorff measure; a set $E$ is called an $s$-set,
if $0<{\mathcal H}^s(E)<\infty$.
A directed graph is said to be \emph{strongly connected}, if for any pair of vertice $i$ and $j$,
there is trail from $i$ to $j$.
The following criterion of the OSC is proved in \cite{MW88} (the `only if' part) and \cite{LiWX94} (the `if' part).

\begin{lemma}\label{MWLi} Let ${\cal F}$ be a GIFS  with a strongly connected base graph.
 Denote its invariant sets by $\{E_j\}_{j=1}^N$, and  the self-similar dimension by $\delta$.
Then ${\cal F}$ satisfies the open set condition if and only if
$$
0<{\mathcal H}^\delta(E_j)<+\infty
$$
for some $1\leq j\leq N$ (or for all $1\leq j\leq N$).
\end{lemma}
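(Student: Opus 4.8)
The plan is to prove the two implications separately, after observing that the upper bound ${\mathcal H}^\delta(E_j)<\infty$ is automatic, so the real content is the equivalence of the OSC with ${\mathcal H}^\delta(E_j)>0$. Write $r_{\bgamma}$ for the product of the contraction ratios along a path $\bgamma$ and $g_{\bgamma}=g_{\gamma_1}\circ\cdots\circ g_{\gamma_n}$. For each state $i$ the cylinders $\{E_{\bgamma}:\bgamma\in\Gamma_i^n\}$ cover $E_i$, with $\mathrm{diam}\,E_{\bgamma}\le r_{\bgamma}\max_j\mathrm{diam}\,E_j$. Since $\delta$ is the self-similar dimension and the graph is strongly connected, $M(\delta)$ is irreducible with spectral radius $1$, so by Perron--Frobenius (Lemma \ref{perron_frob}) there is a positive vector $w$ with $M(\delta)w=w$; then $\sum_{\bgamma\in\Gamma_i^n}r_{\bgamma}^\delta=(M(\delta)^n\mathbf{1})_i\le(\min_j w_j)^{-1}(M(\delta)^n w)_i=(\min_j w_j)^{-1}w_i$, and letting $n\to\infty$ gives ${\mathcal H}^\delta(E_i)<\infty$ for every $i$. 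Also, positivity propagates: if ${\mathcal H}^\delta(E_j)>0$ then for any $i$ a path $\bgamma$ from $i$ to $j$ gives $E_i\supseteq g_{\bgamma}(E_j)$, hence ${\mathcal H}^\delta(E_i)>0$; so it suffices to prove ${\rm OSC}\Leftrightarrow{\mathcal H}^\delta(E_{i_0})>0$ for one state $i_0$.

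For the ``only if'' direction (the Mauldin--Williams part) I would run a mass distribution argument. Let $U_1,\dots,U_N$ witness the OSC. On $E_{i_0}$ put the Markov probability measure $\mu$ that gives a cylinder $[\bgamma]$ mass proportional to $r_{\bgamma}^\delta$ reweighted by the components of $w$ (well defined because $M(\delta)w=w$). Fix $x$ and small $r$, and truncate every infinite path from $i_0$ at the first edge where the running contraction ratio drops below $r$: the resulting finite paths form an antichain, the open sets $g_{\bgamma}(U_{t(\bgamma)})$ are pairwise disjoint with diameter and inradius $\asymp r$, so a volume count shows only boundedly many of them can meet $B(x,r)$. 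Summing their $\mu$-masses gives $\mu(B(x,r))\le C r^\delta$, and the mass distribution principle gives ${\mathcal H}^\delta(E_{i_0})>0$.

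For the ``if'' direction --- the substantial one, due to Li--Wen--Xi and modeled on Schief's theorem for self-similar sets --- I would assume $0<{\mathcal H}^\delta(E_{i_0})<\infty$ and use the density theorem for Hausdorff measure to pick $z\in E_{i_0}$ of near-maximal upper density, i.e.\ with arbitrarily small $\rho$ such that ${\mathcal H}^\delta(E_{i_0}\cap B(z,\rho))\ge\tfrac{1}{2}(2\rho)^\delta$. Rescaling by $1/\rho$ about $z$, the finitely many cylinders $g_{\bgamma}(E_{t(\bgamma)})$ of size $\asymp\rho$ meeting $B(z,\rho)$ become a bounded configuration which, by the near-maximal density, must be almost disjoint uniformly in $\rho$; a subsequence argument then produces a genuinely separated limiting configuration. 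From it one extracts, for each state $v$, an open set $U_v$ (a small ball inside $E_v$ about a blow-up limit of $z$, or a finite union of such) and checks $\bigcup_{\bgamma\in\Gamma_{vw}}g_{\bgamma}(U_w)\subseteq U_v$ with disjoint images --- i.e.\ the OSC for the GIFS. Equivalently one argues in the manner of Bandt--Graf: positivity of ${\mathcal H}^\delta$ keeps the identity out of the closure of the family of similitudes $g_{\bgamma}^{-1}g_{\bomega}$ arising from pairs of overlapping incomparable cylinders, which is a known criterion for the OSC.

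The main obstacle is this last construction: carrying out the blow-up and the density estimate while bookkeeping the state labels, since $z$ lives in a single $E_{i_0}$ but the rescaled pieces land in various $E_v$, so one must produce all the $U_v$ coherently and verify the graph-directed nesting for every bridge at once. Finiteness and the ``only if'' direction are routine once the Perron--Frobenius normalization is fixed. In the write-up one may either cite \cite{MW88} and \cite{LiWX94} for the complete details, or carry out the above roadmap in full.
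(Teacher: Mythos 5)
The paper gives no proof of this lemma at all: it simply attributes the ``only if'' direction to \cite{MW88} and the ``if'' direction to \cite{LiWX94} and moves on. Measured against that, your roadmap is consistent with the cited literature and correctly locates the difficulty. The parts you call routine really are: the finiteness of ${\mathcal H}^\delta(E_i)$ via the covering by cylinders and the Perron--Frobenius eigenvector (your normalization $\sum_{\bgamma\in\Gamma_i^n}r_{\bgamma}^\delta=(M(\delta)^n\mathbf{1})_i\le (\min_j w_j)^{-1}w_i$ is exactly right, and the eigenvector equation $M(\delta)w=w$ is also precisely the consistency condition needed for your Markov measure to be well defined); the propagation of positivity through strong connectivity; and the OSC $\Rightarrow$ positivity direction by the stopping-time antichain, disjointness of the $g_{\bgamma}(U_{t(\bgamma)})$, and a volume count (you should add the standard remark that the $U_i$ may be taken bounded, by intersecting with a sufficiently large ball that is mapped into itself by every $g_e$, so that the inradius/diameter comparison makes sense). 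The one place where your proposal is not yet a proof is the ``if'' direction: the blow-up at a density point and the extraction of coherent open sets $U_v$ for all states simultaneously is the entire content of \cite{LiWX94} (a graph-directed version of Schief's theorem), and your two sentences gesture at it rather than carry it out --- you say as much yourself. Since the paper resolves this by citation, your honest fallback of citing \cite{MW88} and \cite{LiWX94} puts you in exactly the same position as the authors; if you instead want a self-contained argument, the Bandt--Graf-style criterion you mention (the identity is not in the closure of $\{g_{\bgamma}^{-1}g_{\bomega}\}$ over incomparable overlapping pairs) is probably the cleaner route to write up, but it is a genuine project, not a paragraph.
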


\begin{theorem}\label{OSC01}
Let $P_1+\cdots+P_m$ be a consistent and primitive Euler-partition of  $G(S, A, \beta)$.
Then the induced  GIFS \eqref{setequation}   satisfies the OSC, and
$$0<{\cal H}^s(E_v)<\infty, \text{ for all }v\in V,$$
 where $s=dim_H K$.
\end{theorem}

\begin{proof} First, we
show that the similarity dimension of the induced GIFS is $s=\dim_H K$.
 Let $c_j$ be the contraction ratio of $S_j$.
 Then $s$ fulfills the equation $\sum_{j=1}^Nc_j^s=1$ and $0<{\cal H}^s(K)<\infty$, since ${\mathcal S}$ satisfies the OSC. Recall that
\begin{equation}\label{M^s}
M(s)=\left(
  \begin{array}{c}
  \sum_{e\in \Gamma_{uv}}r_e^s \\
\end{array}
\right)_{u,v\in V}.
\end{equation}
By Lemma \ref{lem:basic}(ii), we have (since $\cup_{u\in V}\Gamma_{uv}$ is the set of edges with terminus $v$)
$$
\sum_{u\in V}\sum_{e\in \Gamma_{uv}}r_e^s=\sum_{i=1}^Nc_i^s=1,\quad  v\in V,
$$
namely, the sum of entries of each collum  of $M(s)$ is $1$.
By Perron-Frobenius Theorem (see Lemma \ref{perron_frob} below), the spectral radius of $M(s)$ equals $1$, which implies that  $s$ is the similarity dimension of the induced GIFS.

We identify $E_{v}$ and $E_{v^{-1}}$ in the induced GIFS \eqref{setequation} and forget the order, then we
obtain a \emph{simplified GIFS} as follows:
\begin{equation}\label{simple-gifs}
E_{v_j}=S_{j,1}(E_{|u_{j,1}|})\cup S_{j,2}(E_{|u_{j,2}|})\cup \cdots \cup S_{j,\ell_j}(E_{|u_{j,\ell_j}|}),\quad j=1,\dots, m.
\end{equation}
We have the following facts concerning the simplified GIFS:

$(i)$  The base graph of the simplified GIFS is strongly connected, since the
induced morphism $\tau^*$ is primitive.

$(ii)$  The relation $\mathcal{H}^s(E_{v_j})>0$ holds for at least one $j$, since the union $K=\bigcup_{j=1}^m E_{v_j}$ is has a finite and positive $s$-dimensional Hausdorff measure.

$(iii)$ The simplified GIFS has the same similarity dimension as the induced GIFS, which is $s$.
This is true because the sum of each row of the associated matrix $\tilde M(s)$ of the simplified GIFS is still $1$, where
$$\tilde M(s)=
\left(
  \begin{array}{c}
  \sum_{e\in \Gamma_{uv}}r_e^s +\sum_{e\in \Gamma_{uv^{-1}}}r_e^s\\
\end{array}
\right)_{u,v\in V^+}.
$$
Item (i)-(iii) verify the conditions of  Lemma \ref{MWLi},  hence
 the simplified GIFS satisfies the OSC, which implies
$0<{\mathcal H}^s(E_{v})<\infty$ for all $v\in V^+$ (again by Lemma \ref{MWLi}).

Finally, we observe that if the simplified GIFS satisfies the OSC with open sets $U_1,\dots, U_m$, then
 the induced GIFS  satisfies the OSC with the open sets $U_1,\dots, U_m, U_1,\dots, U_m$.
\end{proof}

The following is a part of the Perron-Frobenius Theorem, see for example \cite{Newman72}.

\begin{lemma} (Perron-Frobenius Theorem) \label{perron_frob}
Let $B=[a_{ij}]$ be a non-negative $k\times k$ matrix.

(i) There is a non-negative eigenvalue $\lambda$ such that it is the spectral radius $B$.

(ii) We have $\displaystyle \min_i\left (\sum_{j=1}^k a_{ij} \right )\leq \lambda \leq \max_i\left (\sum_{j=1}^k a_{ij} \right )$.
\end{lemma}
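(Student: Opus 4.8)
The plan is to reduce both assertions to the classical case of a \emph{strictly positive} matrix by a perturbation argument, and then pass to the limit. For $\epsilon>0$ set $A_\epsilon=A+\epsilon J$, where $J$ is the $k\times k$ all-ones matrix, so that $A_\epsilon$ has all entries strictly positive. For such a matrix the classical Perron theorem (which one may cite from \cite{Newman72}, or obtain via Brouwer's fixed point theorem applied to the continuous self-map $x\mapsto A_\epsilon x/\|A_\epsilon x\|_1$ of the compact simplex $\Delta=\{x\geq 0:\sum_i x_i=1\}$, together with a comparison argument) furnishes an eigenvalue $\lambda_\epsilon$ equal to the spectral radius $\rho(A_\epsilon)$, carried by a \emph{strictly positive} eigenvector $x^{(\epsilon)}\in\Delta$.

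First I would prove (i). The entries of $A_\epsilon$ converge to those of $A$ as $\epsilon\downarrow 0$; since the eigenvalues of a matrix are the roots of its characteristic polynomial and therefore vary continuously with its entries, the spectral radius is continuous, whence $\lambda_\epsilon\to\rho(A)$. The normalized eigenvectors $x^{(\epsilon)}$ all lie in the compact set $\Delta$, so along some subsequence $\epsilon_n\downarrow 0$ they converge to a limit $x_0\in\Delta$; in particular $x_0\geq 0$ and $x_0\neq 0$. Passing to the limit in $A_{\epsilon_n}x^{(\epsilon_n)}=\lambda_{\epsilon_n}x^{(\epsilon_n)}$ yields $Ax_0=\rho(A)x_0$. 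Thus $\rho(A)$ is itself an eigenvalue of $A$, it is non-negative (being a spectral radius), and it admits a non-negative eigenvector; this is exactly (i).

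For (ii) I would extract the row-sum bounds already at the level of $A_\epsilon$, where the eigenvector is strictly positive and no degeneracy can occur. Write $r=\min_i\sum_j a_{ij}$ and $R=\max_i\sum_j a_{ij}$, so that the minimal and maximal row sums of $A_\epsilon$ are $r+k\epsilon$ and $R+k\epsilon$. Choosing an index $p$ with $x^{(\epsilon)}_p$ maximal and an index $q$ with $x^{(\epsilon)}_q$ minimal (both positive), the eigenvalue equation gives
\begin{equation*}
\lambda_\epsilon x^{(\epsilon)}_p=\sum_j (A_\epsilon)_{pj}x^{(\epsilon)}_j\leq (R+k\epsilon)\,x^{(\epsilon)}_p,\qquad
\lambda_\epsilon x^{(\epsilon)}_q=\sum_j (A_\epsilon)_{qj}x^{(\epsilon)}_j\geq (r+k\epsilon)\,x^{(\epsilon)}_q,
\end{equation*}
since $x^{(\epsilon)}_j\leq x^{(\epsilon)}_p$ and $x^{(\epsilon)}_j\geq x^{(\epsilon)}_q$ for every $j$ and the entries of $A_\epsilon$ are non-negative. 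Cancelling the positive factors $x^{(\epsilon)}_p$ and $x^{(\epsilon)}_q$ gives $r+k\epsilon\leq\lambda_\epsilon\leq R+k\epsilon$, and letting $\epsilon\downarrow 0$ yields $r\leq\rho(A)\leq R$, which is (ii).

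The one delicate point—and the reason for perturbing rather than arguing directly on $A$—is that a merely non-negative matrix may have its Perron eigenvector supported on a proper subset of coordinates, so the lower bound $r\leq\lambda$ cannot be read off from a minimal-coordinate argument applied to $x_0$: a zero minimal entry gives the useless inequality $0\geq 0$. The strictly positive perturbation is precisely what removes this obstruction, for then every entry of $x^{(\epsilon)}$ is positive, the cancellations above are legitimate, and the clean bounds survive the limit by continuity of the spectral radius. The upper bound alone is harmless (it already follows from $\rho(A)\leq\|A\|_\infty=R$); it is the lower bound that genuinely requires the perturbation, and I expect it to be the main obstacle to a direct argument.
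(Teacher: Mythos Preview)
Your argument is correct. Note, however, that the paper does not actually prove this lemma: it states it as a classical result and refers the reader to \cite{Newman72}. Your perturbation-and-limit approach is a clean, self-contained way to obtain both assertions, and your explanation of why the perturbation is needed for the lower row-sum bound (namely that the Perron eigenvector of a merely non-negative matrix may have zero coordinates, so one cannot cancel the minimal component directly) is exactly right. The only place you are slightly informal is in taking for granted that the Brouwer fixed point for $A_\epsilon$ yields an eigenvalue equal to the spectral radius rather than just some positive eigenvalue; that does require the comparison step you allude to, but since you also allow yourself to cite \cite{Newman72} for the strictly positive case, the proof is complete as written.
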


\subsection{Pure-cell property and the disjointness of $K=\bigcup_{v\in V^+} E_{v}$}
Finally, we investigate when $K=\bigcup_{v\in V^+} E_{v}$ is a disjoint union in Hausdorff measure.
For $I\in \Sigma^n$, we call the set of edges
$$\{S_I(v); v\in V\} \text{ and } \{S_I(v); v\in V^{-1}\})$$
 a positive and a negative \emph{$S_I$-cell}, respectively.

\begin{defi}\label{def:pure}{\rm Let $\tau$ be an edge-to-trail substitution induced by an
Euler-partition $P_1+\cdots+P_m$.
Let $I\subset \Sigma^n$.
The cell $S_I(\Lambda^{\pm 1})$ is called a \emph{pure cell}, if
there exists $u\in V$ such that   the $S_I$-cell is a subgraph of $\tau^{n}(u)$.
In this case,
 we also say
the partition $P_1+\cdots+P_m$ \emph{potentially contains  pure cell}.
}
\end{defi}


\begin{theorem}\label{thm:old}
Let $\cS=\{S_i\}_{1\leq i \leq N}$ be an IFS satisfying the OSC, and
let $A$ be  a  skeleton of $\cS$.
If there exist a vector $\beta\in \{-1,1\}^N$
  and an Euler-partition  $P_1+\cdots +P_m$ of  $G(\cS, A,\beta)$ such that
the partition is consistent, primitive and potentially contains pure cells,
then

(i) $K=\bigcup_{v\in V^+} E_{v}$ is a disjoint union in the $s$-dimensional Hausdorff measure, where $s=\dim_H K$;

(ii) the edge-to-trail substitution  $\tau$ in \eqref{eq:path-sub} leads to a   a space-filling curve of $K$.
\end{theorem}

\begin{proof}  (i)
Suppose that an $S_I$-cell is a pure cell, \emph{i.e.}, all the edges of the $S_I$-cell belong to $\tau^{n}(u)$
 for some $u\in V$, where $n=|I|$.
 Without loss of generality, we may assume that the orientation of the $S_I$-cell is positive.
 Then all $S_I(E_{v}), v\in V^+$ appear in
 the right hand side of the $n$-th iteration of the set equation \eqref{setequation} corresponding to $E_{u}$.
 Hence,  $\bigcup_{v\in V^+} S_I(E_{v})$ is a  disjoint union in the $s$-dimensional   Hausdorff measure, since the
 induced GIFS satisfies the OSC (by Theorem \ref{OSC01});
  it follows that its image under $S_I^{-1}$, $\bigcup_{v\in V^+} E_{v}$ is also a  disjoint union  in Hausdorff measure.

(ii) By Theorem \ref{thm:induce}, the induced GIFS
is a linear GIFS.
By Theorem \ref{OSC01}, the induced GIFS
satisfies the OSC, and all the invariant sets are $s$-sets.

Let  $L_j={\mathcal H}^s(E_{v_j})$, and  $L=\sum_{j=1}^m L_j$.
By Theorem \ref{thm:RaoZh},
for each $j\in \{1,\dots,m\}$, there exists an optimal parametrization
$\varphi_j: [0,L_j]\mapsto E_{v_j}$
such that $\varphi_j(0)$ and $\varphi_j(L_j)$ are the origins and terminus
of $v_j$, respectively.

 Let $\varphi: [0, L]\to K$ be the curve obtained by
joining all the $\varphi_i$ one by one. Since $K=\bigcup_{v\in V^+} E_{v}$ is a disjoint union
in measure, we conclude that $\varphi$ is an optimal parametrization of $K$.
\end{proof}

\begin{figure}[h]
\subfigure[Initial trail: positive]{\includegraphics[width=.25 \textwidth]{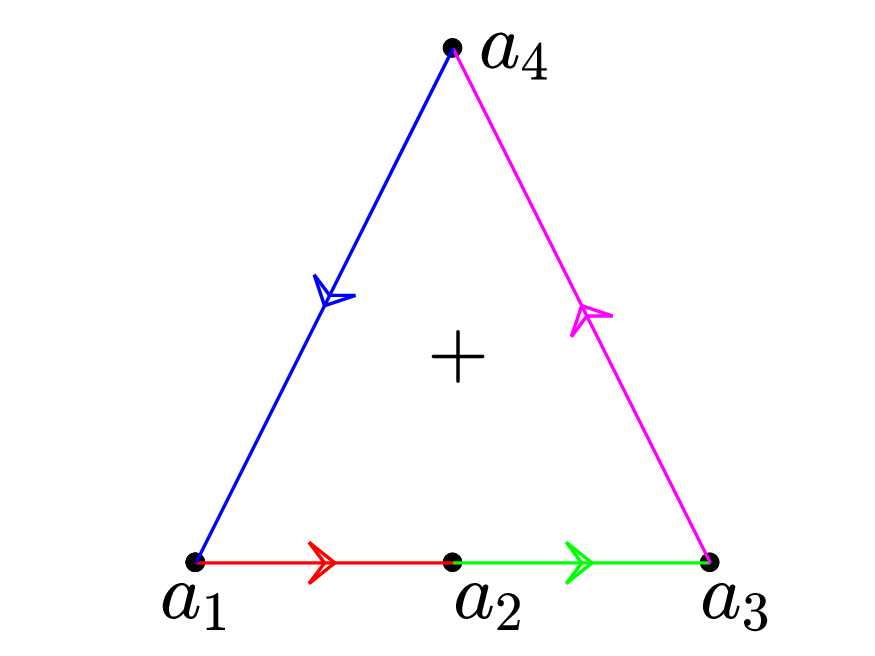}}
\subfigure[Initial trail: negative]{\includegraphics[width=.25 \textwidth]{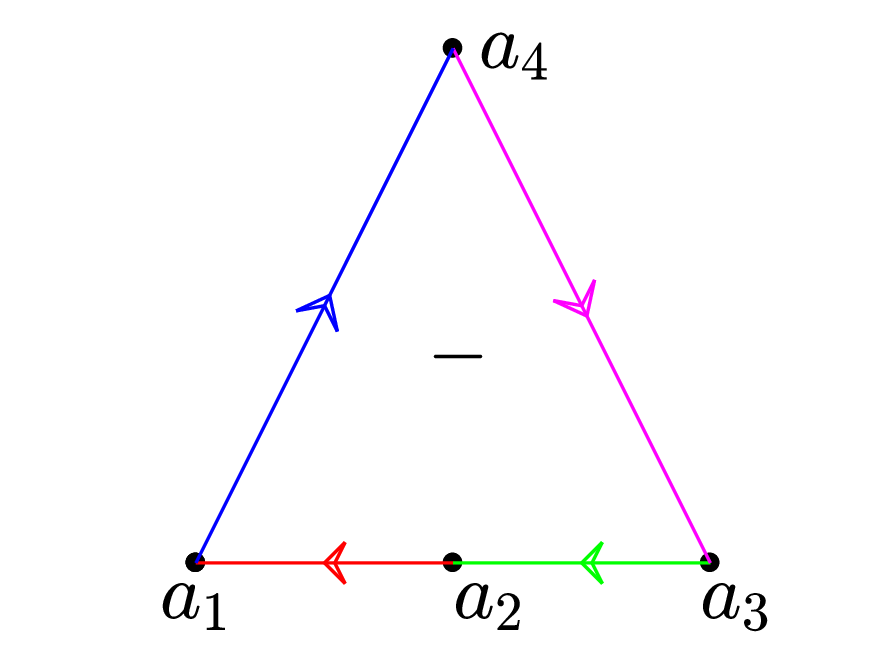}}
\subfigure[Euler tour]{\includegraphics[width=.25 \textwidth]{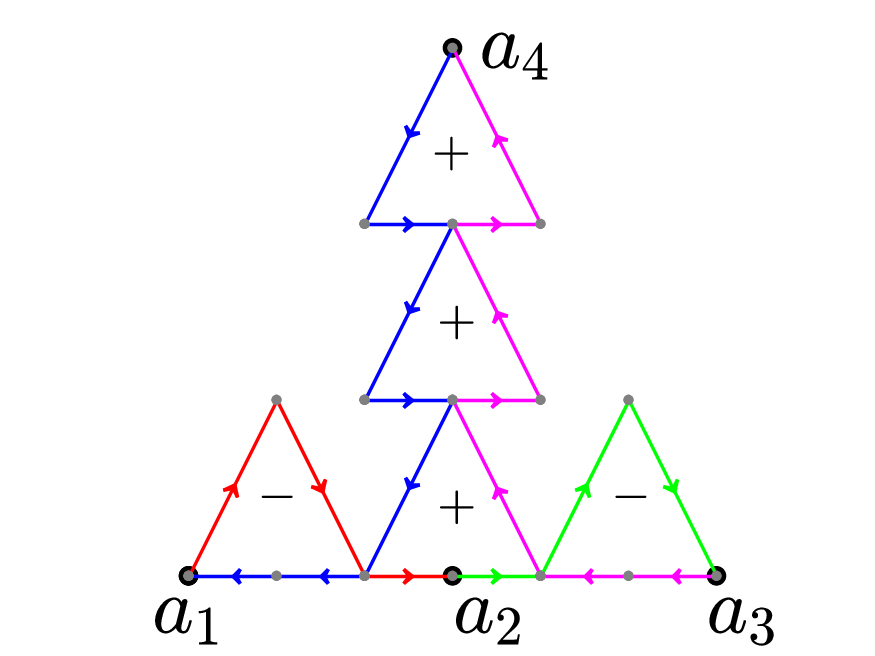}}\\
\subfigure[Initial pattern]{\includegraphics[width=.28 \textwidth]{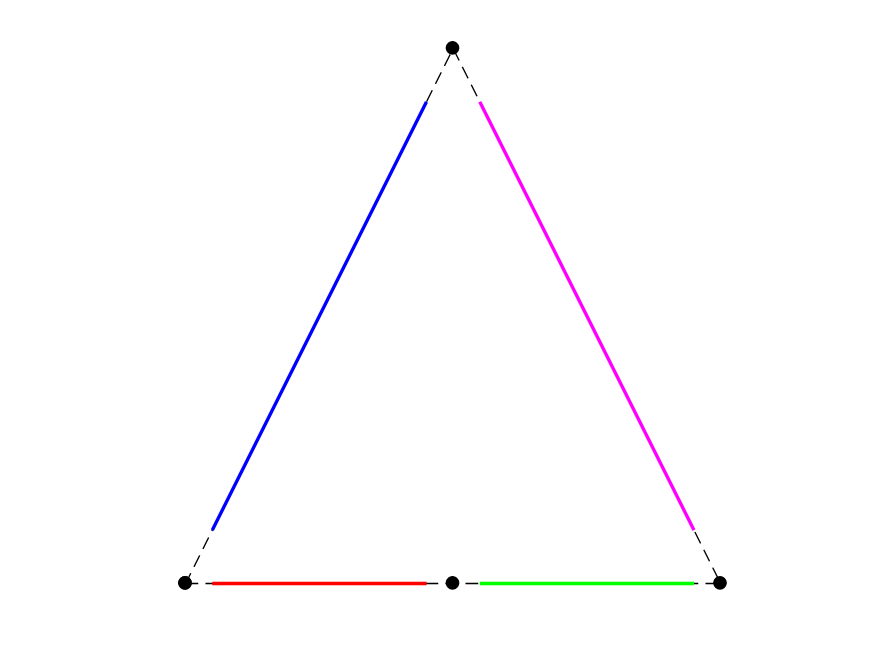}}\subfigure[First generation]{\includegraphics[width=.28 \textwidth]{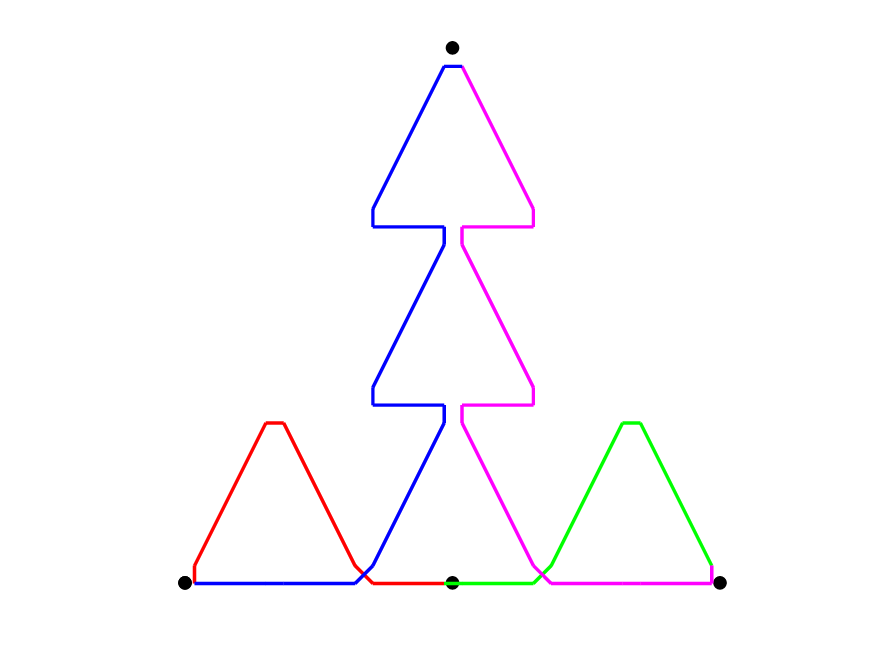}}
\subfigure[Second generation]{\includegraphics[width=.28 \textwidth]{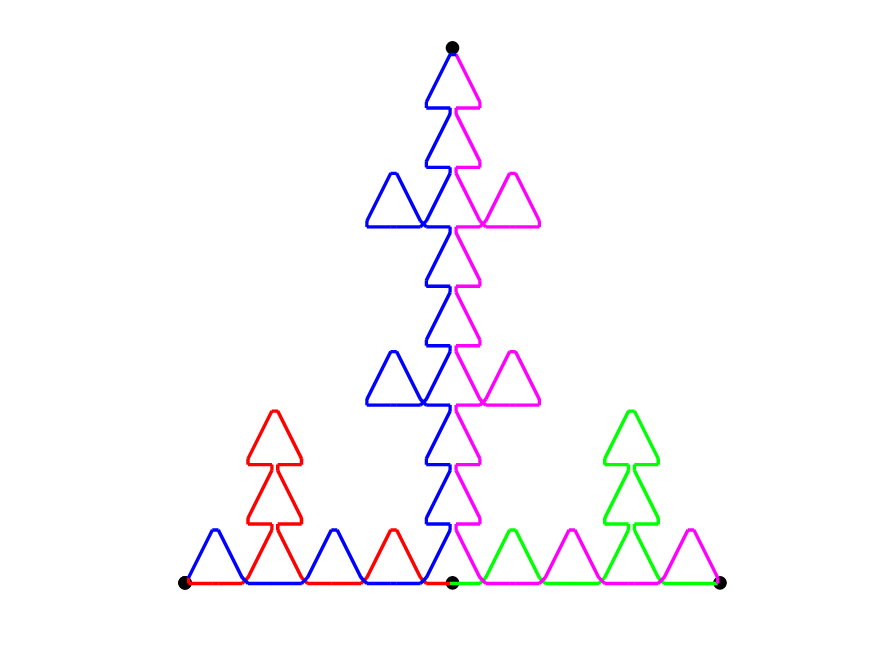}}\\
\subfigure[Third generation]{\includegraphics[width=.35 \textwidth]{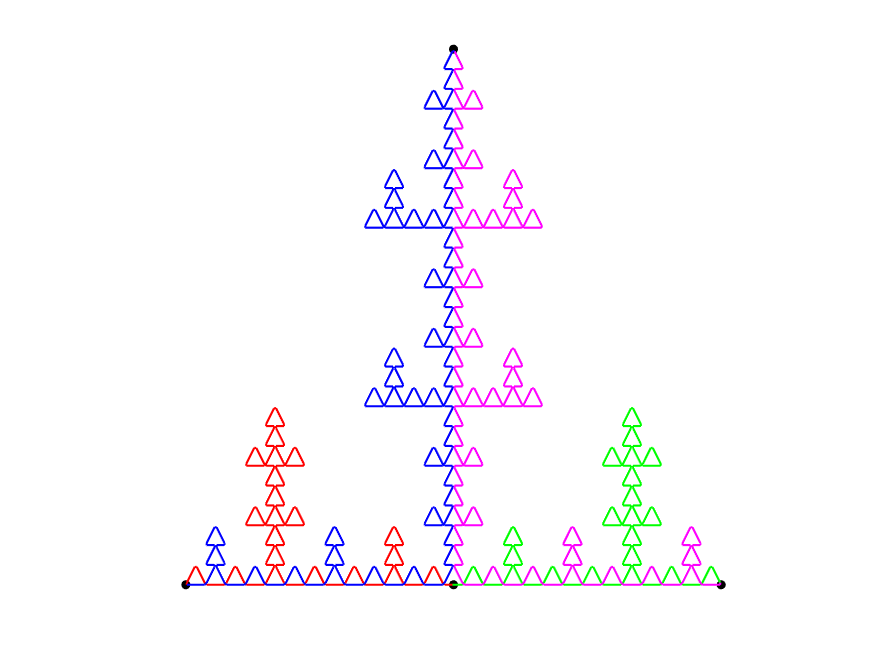}}
\subfigure[Invariant sets of the linear GIFS]{\includegraphics[width=.35 \textwidth]{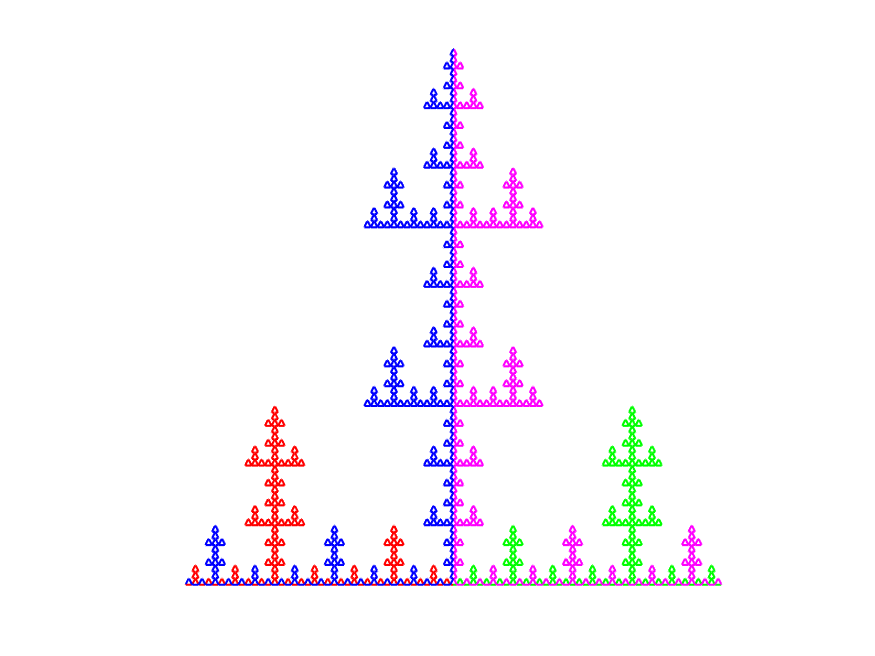}}
\caption{Christmas tree}\label{Fig_Christree}
\end{figure}

\subsection{The Christmas tree}\label{Ex-Christ}
 {\rm  The Christmas tree is the invariant set of the   IFS $\{S_j\}_{j=1}^5=\{(x+d)/3; d\in D \}$, where $D=\{0, 1, 2,1+\mi,1+2\mi\}$.
We choose $A=\{a_1, a_2, a_3, a_4\}=\{0, 1/2, 1, 1/2+i\}$, which are the fixed points of $S_1, S_2, S_3, S_5$.  Let $v_i=\overrightarrow{a_ia_{i+1}}$ (assume $a_5=a_1$) and let
$\Lambda=v_1+v_2+v_3+v_4.$

To get a primitive substitution, we choose an orientation vector $\beta=(-1,1,-1,1,1)$.
Using the Euler-partition in Figure \ref{Fig_Christree} (e),  we get the following edge-to-trail
substitution
\begin{equation}
\left \{
\begin{array}{l}
v_1\mapsto S_1(v_4^{-1})+S_1(v_3^{-1})+S_2(v_1),\\
v_2\mapsto S_2(v_2)+S_3(v_4^{-1})+S_3(v_3^{-1}),\\
v_3 \mapsto S_3(v_2^{-1})+S_3(v_1^{-1})+S_2(v_3)+S_4(v_2)+S_4(v_3)+S_5(v_2)+S_5(v_3),\\
v_4 \mapsto S_5(v_4)+S_5(v_1)+S_4(v_4)+S_4(v_1)+S_2(v_4)+S_1(v_2^{-1})+S_1(v_1^{-1}).
\end{array}
\right .
\end{equation}
and the rules for $v_1^{-1}, v_2^{-1}, v_3^{-1}, v_4^{-1}$ can be obtained accordingly.

}





\section{\textbf{Consistency}}\label{sec:consistency}

In this section and next section, we will confirm the conditions in Theorem \ref{thm:old}.
In this section, we study the existence of consistent Euler-partitions.

Let $\mathcal{S}=\{S_j\}_{j=1}^{N}$ be an IFS   possessing a  skeleton
$A=\{a_1,a_2,\dots,a_m\}$. Let $K$ be the invariant set of $\cS$.
Recall that
$
\Sigma=\{1,\dots,N\}.
$

 Let $\tau$ be a permutation of $\{1,\dots, m\}$. Let $\Lambda_\tau$ be the cycle passing
 the elements of $A_\tau=\{a_{\tau(1)}, \dots, a_{\tau(m)}\}$ one by one. As before, we define
\begin{equation}\label{eq:self2}
G(\cS, A_\tau, \beta)=\bigcup_{j\in \Sigma} S_j(\Lambda_\tau^{\beta_i}).
\end{equation}

 For simplicity,
we abbreviate $G(\mathcal{S}, A_{\tau},\beta)$ as $G(\cS, A_\tau)$  when  $\beta=(1,1,\dots,1)$ is  totally positive.
The main idea in this section
 is to  generate an Euler tour of $G(\cS, A_\tau)$ by a bubbling process.

\subsection{Bubbling property} First, we give some definitions.
 We say two subgraphs $H_1$ and $H_2$ are  disjoint, denoted by $H_1\cap H_2=\emptyset$, if they do not have a common edge.

Let $P$ be an Euler tour of a directed graph $G$, and let $C=e_1+\cdots+e_h\subset G$  be a cycle.
We call $C$ a \emph{remaining cycle} of $P$,  if  $e_j$ appears earlier than $e_{j+1}$ in $P$ for all $1\leq j<h$ when we regard $e_1$ as the starting edge of $P$.
 In this case,   $P$ can be uniquely written as
 \begin{equation}\label{outpath decompose}
P= e_1+L_1+\cdots+e_h+L_h,
 \end{equation}
 where  some $L_j$ may be  empty trails. We  call $L_j$ out-trails  w.r.t.  $C$, and
   call  \eqref{outpath decompose} the \emph{out-trail decomposition} of $P$ w.r.t. $C$.

\begin{figure}
  \centering
  \includegraphics[width=0.4\textwidth]{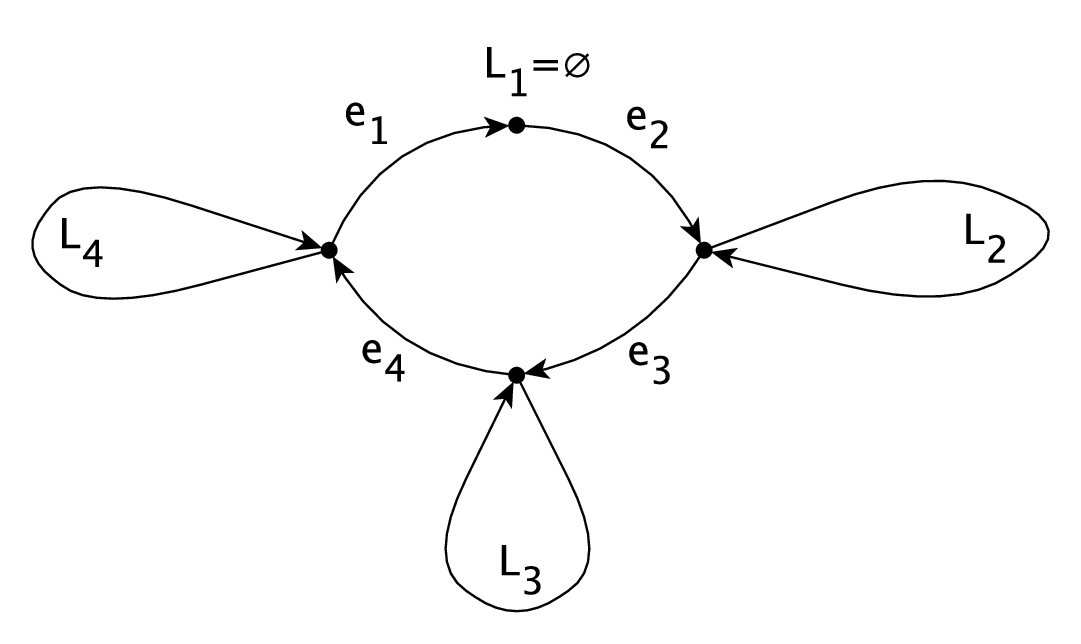}\\
 \caption{A remaining cycle $C=e_1+e_2+e_3+e_4$ and its out-trials.}
\end{figure}

\begin{defi}{\rm
 \emph{(Cell-exact subgraph.)}  A subgraph $H$ of $G(\cS,A_\tau)$ is said to be \emph{cell-exact}, if
 for all cell $S_k(\Lambda_\tau)$, either $S_k(\Lambda_\tau)\subset H$ or $S_k(\Lambda_\tau)\cap H=\emptyset$.
}
\end{defi}

\begin{defi}\label{bu-pro}{\rm
\emph{(Bubbling property.)} Let $H\subset G(\mathcal{S}, A_\tau, \beta)$
be a cell-exact subgraph, and  let $P$ be an Euler tour of
$H$.
We say $P$    satisfies  the \emph{bubbling} property, if  for all $S_i(\Lambda)\subset H$,
    the cell $S_i(\Lambda)$ is a remaining cycle of $P$, and all the outpaths w.r.t. $S_i(\Lambda)$
     are cell-exact.
   }
\end{defi}

The goal of this section is to prove the following theorem.

\begin{thm}\label{thm:consistency}
There exist an integer  $n$ and a permutation $\tau$ such that $G(\mathcal{S}^n, A_\tau)$ has an Euler-partition
$P=P_1+\cdots +P_m$  which is consistent and satisfies the bubbling property.
\end{thm}

\begin{defi}{\rm Let $P$ be an Euler tour of $G(\cS, A_\tau,\beta)$. We say
a cell $S_i(\Lambda_\tau)$ is \emph{unbroken}, if $S_i(\Lambda_\tau)=e_1+\cdots +e_m$
is a subtrail of $P$. (Here $e_1$ can be any edge in $S_i(\Lambda_\tau)$.)
}
\end{defi}

\subsection{Bubbling process}

Recall that  $H(\mathcal{S},A)$ is the Hata graph induced by $A$, see Definition \ref{def-skeleton}. It  is a connected undirected graph.

 A subgraph $R$ of $H(\mathcal{S},A)$ is called a \emph{spanning  tree}, if $R$ is a tree containing A.
(Recall that an undirected connected graph is called a \emph{tree}, if it does not contain cycles.)
A vertex $v$ of a tree $R$ is called a \emph{top}, if $v$ is a vertex of degree one.
\begin{figure}
\includegraphics[width=0.3\textwidth]{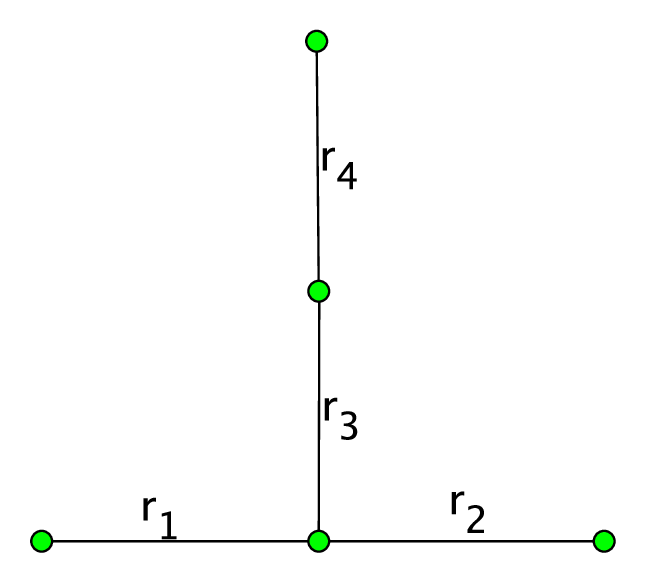}\quad \ \
\includegraphics[width=0.33\textwidth]{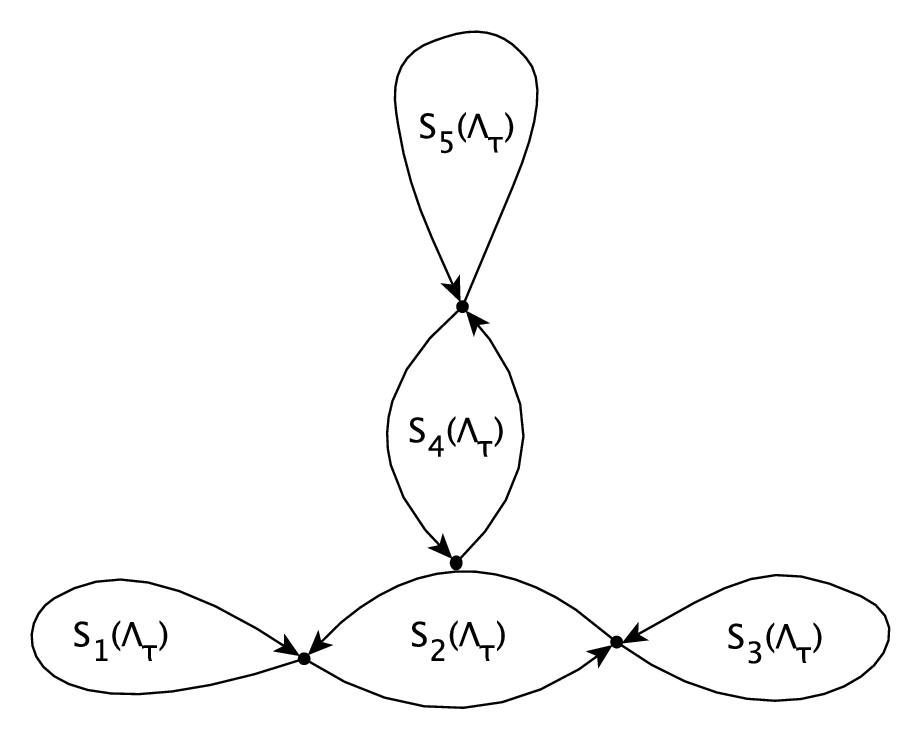}
\caption{A spanning tree and bubbling process.}
\label{fig:bub}
\end{figure}

From now on, we choose a spanning tree $R$ of $H(\cS, A)$ and fix it.
We choose an order of the edges of $R$, say,
\begin{equation}\label{bublling-order}
r_1, \dots, r_{N-1},
\end{equation}
 such that for any $1\leq j\leq N-1$,
the subgraph consisting of the edges  $r_1,\dots, r_j$ is connected.
This means that starting from a certain cell, we can add the cells one by one
following above order to obtain $G(S,A_\tau)$,
and all the subgraphs in the process are  connected.
 By changing the name of $S_j$, we may assume without loss of generality that the cells are added in the order $S_1(\Lambda_\tau),\dots, S_N(\Lambda_\tau)$.
  Then  one vertex of $r_j$ is $S_{j+1}$, and the other vertex of $r_j$ belongs  to $\{S_1,S_2,\dots, S_j\}$, which we denote  by $S_{\varphi(j)}$.
We note that if $S_k$ is a top of $R$, then $\varphi(j)\neq k$ for any $j$. See Figure \ref{fig:bub}.

Now, we  construct inductively a sequence   $(\Delta_k)_{k=1}^N$  such that
$\Delta_k$ is an Euler tour of
$\bigcup_{j=1}^k S_j(\Lambda_\tau)$
 satisfying the bubbling property in Definition \ref{bu-pro}.

Let $\Delta_1=S_{1}(\Lambda_\tau)$ be the first Euler tour, which clearly satisfies the bubbling property.

Suppose $\Delta_{k}$ has been constructed, $1\leq k\leq  N-1$. Now we add the cell $S_{k+1}(\Lambda_\tau)$ to $\Delta_k$.
In the spanning tree $R$, $S_{k+1}$ is connected to
 $S_{\varphi(k)}\in \{S_{1},\dots, S_{{k}}\}$, so
 $$
  S_{k+1}(A)\cap S_{\varphi(k)}(A)\neq \emptyset.
 $$
 Take any point $z^*$ from the above intersection.
 Write
  $$S_{\varphi(k)}(\Lambda_\tau)=e_1+\cdots+e_m \text{ and } S_{k+1}(\Lambda_\tau)=e_1'+\cdots+e_m', $$
  where both $e_1$ and $e_1'$ have origin $z^*$.
The outpath decomposition of $\Delta_k$ w.r.t. $S_{\varphi(k)}(\Lambda_\tau)$ can be written as
 $
 \Delta_k=e_1+L_1+\cdots+e_m+L_m.
 $
 Set
 $$
 \Delta_{k+1}=e_1+L_1+\cdots+e_m+L_m+(e_1'+\cdots +e_m').
 $$
 Clearly,  $\Delta_{k+1}$ is an Euler tour of the graph
$
 \bigcup_{j=1}^{k+1}S_{j}(\Lambda_\tau).
$

Now we show that $\Delta_{k+1}$ satisfies the bubbling property.
For a cell  $S_j(\Lambda_\tau)\subset \Delta_{k+1}$, no matter
it is  $S_{k+1}(\Lambda_\tau)$, or is
$S_{\varphi(k)}(\Lambda_\tau)$, or belongs to a certain $L_i (1\leq i \leq k)$, one can easily prove that
it is a remaining  cycle of $\Delta_{k+1}$ and all its out-trials are cell-exact. The bubbling property is proved.

Set $k=N$, we obtain  the following:

\begin{lemma} For any permutation $\tau$ of $A$, there
exists an  Euler tour $\Delta$ of
$G(\cS, A_\tau)$  satisfying the bubbling property.  Moreover, if  $S_i$ is a top of the
  spanning tree $R$, then $S_i(\Lambda_\tau)$ is an unbroken cell of $\Delta$.
\end{lemma}

\subsection{A natural Euler-parition of $\Delta$. }
Now we introduce a natural partition of $\Delta$.
We start from the vertex $a_1$, and walk along $\Delta$.
We record the elements of $A$ appearing on the Euler tour  but without repetition, then we get a rearrangement of $A$,
which we denote by
$$a_{\tau^*(1)}, \cdots, a_{\tau^*(m)}.$$
Let $P_j$ be the subtrail of $\Delta$
starting from the first visiting of $a_{\tau^*(j)}$ and ending at the first visiting of $a_{\tau^*(j+1)}$, then
$$P_1+\cdots+P_m$$
is an Euler-partition of $G(\cS, A_\tau)$ with output permutation $\tau^*$,
which we denoted by ${\cal P}_\tau$.

\subsection{A cycle of permutations of $A$}
Starting from a permutation $\tau$ of $A$, we can construct an Euler-partition ${\cal P}_\tau$
of $G(S, A_\tau)$, and we denote its output permutation by $\tau^*$.
Since there are only finite many permutations of $A$,  there  exist $\tau_1,\dots, \tau_h$
 such that $\tau_{j+1}$ is the output permutation of ${\cal P}_{\tau_j}$, $1\leq j\leq h-1$,
  and  $\tau_{1}$ is the output permutation of ${\cal P}_{\tau_h}$.

 \subsection{Composition of Euler-partition}
Let $P=P_1+\cdots+P_m$ be an Euler-partition of $G(\cS, A_{\tau_1})$  with
 output permutation $\tau_2$,  and
 let $Q=Q_1+\cdots+Q_m$ be an  Euler-partition of $G(\cS^n,A_{\tau_2})$ with output permutation  $\tau_3$.
 Since $Q$ is an Euler tour of
 $$\bigcup_{I\in \Sigma^n} S_I(\Lambda_{\tau_2}),$$
 each edge in $Q$ can be written as
 \begin{equation}\label{eq:edge}
 S_I(\overrightarrow{a_{\tau_2(k)}a_{\tau_2(k+1)}}).
 \end{equation}
  Notice that
 $S_I(P_k)$ is a trail  having the same origin and terminus as the edge in \eqref{eq:edge}.
 So, we can define  an Euler tour of $G(S^{n+1},A_{\tau_1})$
by replacing  every edge $S_I(\overrightarrow{a_{\tau_2(k)}a_{\tau_2(k+1)}})$  in $Q$ by the trail $S_I(P_k)$.

 We denote this replacement rule by $\sigma$, that is,
 \begin{equation}\label{sigma}
\sigma (S_I(\overrightarrow{a_{\tau_2(k)}a_{\tau_2(k+1)}}))= S_I(P_k),\quad I\in \{1,\dots, N\}^n, \  k\in \{1,\dots, m\},
 \end{equation}
 and set $\sigma(\bomega
 )=\sigma(\omega_1)+\dots+\sigma(\omega_{\ell})$ for any trail $\bomega=\omega_1+\dots+\omega_{\ell}$.
We define the composition of $P$ and $Q$ to be
 $$Q\circ P=\sigma(Q_1)+\cdots \sigma(Q_m),$$
which is an Euler-partition   of $G(\mathcal{S}^{n+1}, A_{\tau_1})$.

\begin{lemma}\label{lem:follower}
Let $P=P_1+\cdots+P_m$ be an Euler-partition of $G(S, A_{\tau_1})$  with
 output permutation $\tau_2$,  and
 let $Q=Q_1+\cdots+Q_m$ be an Euler-partition of $G(S^n,A_{\tau_2})$ with output permutation  $\tau_3$. Then

 (i) If both $P$ and $Q$ have the bubbling property, then $Q\circ P$ also has the bubbling property.

(ii) The output permutation of $Q\circ P=\sigma(Q_1)+\cdots \sigma(Q_m)$ is $\tau_3$.


(iii) If the numbers of unbroken cells in $P$ and $Q$ are $u$ and $v$, respectively, then the number of unbroken cells in $Q\circ P$
is no less than $(u-1)v$.
\end{lemma}

\begin{proof} (i) Let $Ij\in \Sigma^{n+1}$. Since $Q$ has the bubbling property, the outpath decomposition w.r.t. $S_I(\Lambda_{\tau_2})$
can be written as
$$
Q=e_1+L_1+\cdots+ e_m+L_m,
$$
where $e_j=S_I(\overrightarrow{a_{\tau_2(j)}a_{\tau_2(j+1)}})$, and the outpaths $L_j$  are cell-exact.
Then   $Q\circ P$ can be written as
 $$
 Q\circ P=\sigma(e_1)+\sigma(L_1)+\cdots+\sigma(e_m)+\sigma(L_m),
 $$
 where  $\sigma$ is the replacement rule defined in \eqref{sigma}.
  Clearly $\sigma(L_j)$ are cell-exact  in $G(\cS^{n+1}, A_{\tau_1})$ since $L_j$ are cell-exact in $G(\cS^n, A_{\tau_2})$.
 Since all $\sigma(L_j)$ does not contain edges in $S_{Ij}(\Lambda_{\tau_1})$, to prove $Q\circ P$
is bubbling  w.r.t. to  $S_{Ij}(\Lambda_{\tau_1})$,
 we need only show that
 $$
 \sigma(e_1)+\cdots +\sigma(e_m)=S_I(P)
 $$
 is bubbling w.r.t. $S_{Ij}(\Lambda_{\tau_1})$.
 This is clearly true since $P$ is bubbling w.r.t. $S_{j}(\Lambda_{\tau_1})$
 by our assumption, and the bubbling property does not change under a linear transformation.
 This finishes the proof of (i).

 (ii)  is obvious, since $\sigma(Q_k)$ has the same origin and terminus with $Q_k$.

 (iii) Let $S_I(\Lambda_{\tau_2})=e_1+\cdots+e_m$  be an unbroken cell of $Q_k$. Then
 $$\sigma(e_1+\cdots+e_m)=S_I(P_1)+\cdots+S_I(P_m)$$
  is the affine image of $P$ under $S_I$, and it is a subtrail of $Q\circ P$ by the unbroken property.
  This subtrail contains at least $u-1$ unbroken cells of $Q\circ P$,
  since under $S_I$, all unbroken cells in $P$ map to unbroken cells of $Q\circ P$, except at most one of them may be no longer unbroken after inserting the complement of $S_I(P)$. So $Q\circ P$ contains  at least $(u-1)v$ unbroken  cells.
 \end{proof}




\medskip
\noindent \textbf{Proof of Theorem \ref{thm:consistency}.} We have shown that there exists a sequence of permutations $\tau_1, \dots, \tau_h$ and a sequence of Euler-partitions $Q^1,\dots, Q^h$  such that for $k=1,\dots, h$,

$(i)$ $Q^k$
  is  an Euler-partition of $G(\cS, A_{\tau_k})$ with output permutation $\tau_{k+1}$,
  where we identify $\tau_1$ and $\tau_{h+1}$;

  $(ii)$ $Q^k$ has the bubbling property.

\noindent By  Lemma \ref{lem:follower},
\begin{equation}\label{eq:PQ}
P=Q^{h}\circ \cdots \circ Q^1
\end{equation}
is an Euler-partition of $G({\cal S}^h, A_{\tau_1})$ with output permutation $\tau_1$
and satisfying the bubbling property.
This $P$ is the desired Euler-partition.
$\Box$

\medskip

\begin{remark}\label{rem:pure}
{\rm
If $H(\cS, A)$
has a spanning tree with $3$ or more tops, then we can require  the Euler-partition $P$ in Theorem \ref{thm:consistency}  containing at least $3\cdot 2^{h-1}$ unbroken cells.
}
\end{remark}

\begin{remark}\label{rem:ite}
{\rm
Let $P$ be a consistent Euler-partition of $G(\cS, A_\tau, \beta)$,
then we can define the iteration of $P$ as $P^\ell=P\circ P\circ \cdots \circ P$ by induction, which is a consistent Euler-partition
of  $G(\cS^\ell, A_\tau, \beta')$ for some $\beta'\in \{-1,1\}^{N^\ell}$.
}
\end{remark}

\section{\textbf{Primitivity}}\label{sec:primitive}

  Let $Q=Q_1+\cdots+Q_m$
be an Euler-partition  of $G({\cal S}, A_\tau)$ which is consistent and satisfies the bubbling property.
In this section, we show that we can always  change the orientation of some cells in $G({\cal S}, A_\tau)$ and transfer $Q$ to a  primitive Euler-partition.

We shall use $A$ and $\Lambda$ instead of $A_\tau$ and $\Lambda_\tau$, since the permutation $\tau$ is fixed in this section.
Our goal  is to prove the following:

\begin{theorem}\label{thm:operation} Let $Q=Q_1+\cdots+Q_m$
be a consistent  Euler-partition of  $G(\cS, A)$ satisfying the bublling property. If
\begin{equation}\label{eq:length2}
\min_{1\leq j\leq m} |Q_j|\geq m^4+5m^2,
\end{equation}
then there exist an orientation vector  $\beta$ such that we can construct a consistent and primitive Euler-partition
$P=P_1+\cdots+P_m$ of $G({\cal S}, A,  \beta)$.  Moreover,  $P$   contains pure cells whenever $Q$ does.
\end{theorem}

\subsection{Classification of cells}
  We call the initial edges and terminate edges of trails $Q_1,\dots, Q_m$  \emph{special edges};  there are $2m$ special edges. We say a trail \emph{visits a cell}, if they have  common edges.

 Let $i\in \Sigma=\{1,\dots, N\}$. We call $S_i(\Lambda)$  a \emph{special cell}, if it contains special edges.
We call $S_i(\Lambda)$ a \emph{pure} cell,   a  \emph{bi-partition cell}, or a
\emph{poly-partition cell} if it is   visited by one, two or more than two members of
 $\{Q_k;~1\leq k\leq m\}$.


 Take a cell $S_i(\Lambda)$,  let
  \begin{equation}\label{eq:out}
   Q=e_1+L_1+e_2+L_2++\cdots+ e_m+L_m
   \end{equation}
   be the outpath decomposition of $Q$ w.r.t.  $S_i(\Lambda)$.


\begin{lemma} \label{numb}
Let $S_i(\Lambda)$  be a non-special cell. Then
\begin{equation}\label{equ-3}
\#\{Q_j;\  Q_j \text { visits } S_i(\Lambda)\}=\#\{L_j; ~ L_j \text{ contains special edges}\}.
\end{equation}
\end{lemma}

\begin{proof} If $S_i(\Lambda)$ is pure, that is, it is a subset of one $Q_j$, then both sides of \eqref{equ-3} equal $1$.

Now suppose $S_i(\Lambda)$ is not pure.
If $Q_j$ visits $S_i(\Lambda)$, then $Q_j$ visits exactly two $L_j$ which contain special edges.
On the other hand, each outpath $L_j$ containing special edges intersects exactly two of
$Q_j$ visiting $S_i(\Lambda)$. The lemma is proved. (If one boy shakes hands exactly with two girls and one girl shakes hands exactly with two boys, then the number of boys and the number of girls are the same.)
\end{proof}

Now we estimate the number of poly-partition cells.
 \begin{lemma}\label{Estima:poly}
 The number of  poly-partition cells is less than $m(m-1)(m-2)/6$.
 \end{lemma}

 \begin{proof}
For a cell $S_i(\Lambda)$, we define $\kappa(i)=\{k; \ Q_k \text{ visits } S_i(\Lambda)\}.$

Let $S_i(\Lambda)$ and $S_j(\Lambda)$  be two poly-partition cells, that is, $\#\kappa(i), \#\kappa(j)\geq 3$.

Let $L$ be the outpath  of $S_i(\Lambda)$ containing the cell $S_j(\Lambda)$. Then there exist two edges $e_h$ and $e_{h+1}$ in the cell $S_i(\Lambda)$ such that
$$ e_h+L+e_{h+1}$$
is a subtrail of $Q$.
Hence, if $Q_k$ visits both the $S_i$-cell and the loop $L$, then
 it contains at least one of $e_h$ and $e_{h+1}$. This implies that at most two elements of $\{Q_k;\  k\in \kappa(i)\}$, visit $L$.
  Therefore $\kappa(i)$ and $\kappa(j)$ share  at most two elements.
   It follows that the smallest three elements of $\kappa(i)$
     and that of $\kappa(j)$ cannot be the same. The lemma is proved.
 \end{proof}

\subsection{Kingdom of a non-special bi-partition cell.}
\begin{figure}[htbp]
  \includegraphics[width=0.65\textwidth]{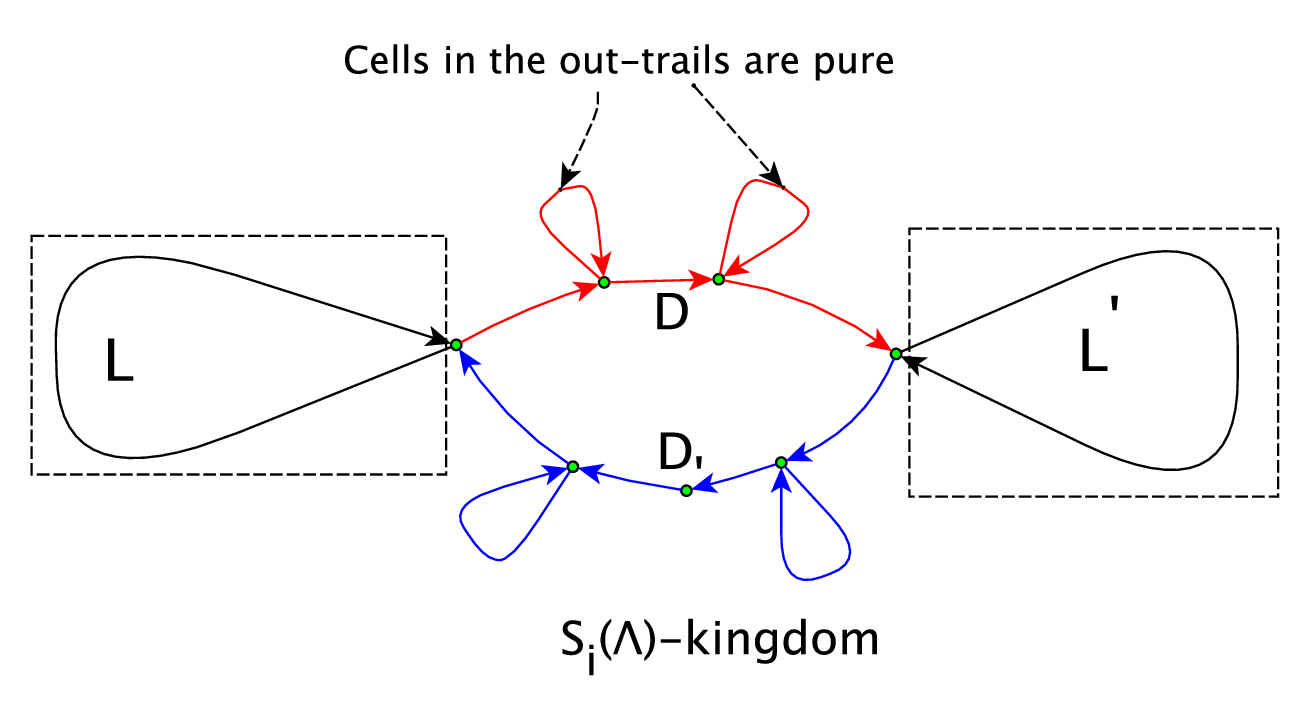}
 \caption{The re-decomposition of the Euler tour $Q$ }\label{S_i-kingdom}
 \end{figure}
Let $S_i(\Lambda)$ be a non-special  bi-partition cell,
then   $S_i(\Lambda)$   is visited by two elements in $\{Q_j; 1\leq j\leq m\}$, say, $Q_k$ and $Q_{k'}$.
By Lemma \ref{numb}, exactly two outpaths of $S_i(\Lambda)$, denoted by
 $L$ and $L'$,   contain  special edges.
Then the Euler tour $Q$ has the following decomposition:
 $$
 Q=L+D+L'+D'.
 $$
 We call $D\cup D'$ the
\emph{$S_i(\Lambda)$-kingdom}, and  call $D+D'$ the \emph{partition} of the $S_i(\Lambda)$-kingdom.
Then $Q_k$, also $Q_k'$,  are cut into three parts by $L$ and $L'$,  and we denote the three parts by
\begin{equation}\label{eq:Q_k}
Q_k=C+D+E, \quad  Q_{k'}=C'+D'+E',
\end{equation}
 where $C, E'\subset L$ and  $E, C'\subset L'$. (See Figure \ref{kingdom-decomp}.)  As a consequence of \eqref{eq:Q_k}, we have

 \begin{figure}[htbp]
 \includegraphics[width=0.65\textwidth]{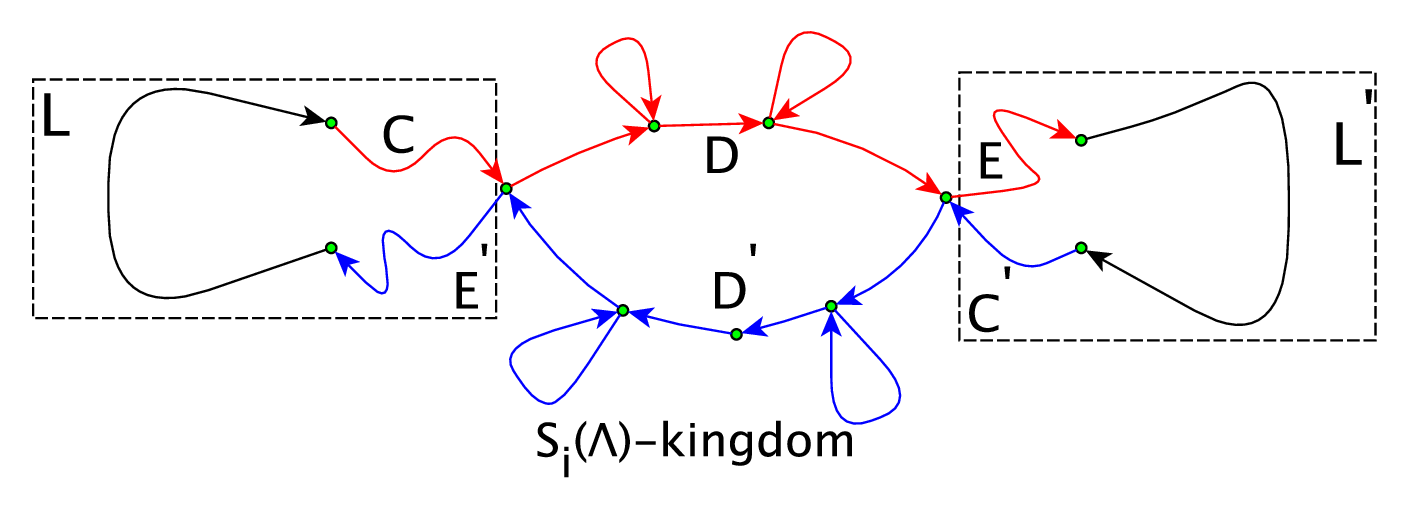}
 \caption{The decomposition of $Q_k$ and $Q_k'$ }\label{kingdom-decomp}
 \end{figure}

 \begin{lemma}\label{lem:pur}
 An $S_i(\Lambda)$-kingdom is a cell-exact subgraph; moreover, all cells in it are pure except $S_i(\Lambda)$.
 \end{lemma}

\begin{lemma} (Disjointness of Kingdoms.) Let $S_i(\Lambda)$ and $S_j(\Lambda)$ be two non-special bi-partition cells. Then the $S_i$-kingdom and the $S_j$-kingdom are disjoint.
\end{lemma}

\begin{proof} Since $S_i(\Lambda)$ is a bi-partition cell, $Q$ can be written as
$$Q=L+  D + L' + D'$$
where $D+ D'$ is the $S_i${-kingdom}.
The cell $S_j(\Lambda)$  does not belong to the $S_i$-kingdom since it is not pure (see Lemma \ref{lem:pur}),
 so $S_j(\Lambda)$ must belong to one of  $L$ and $L'$.
Let us assume that $S_j(\Lambda)\subset L'$ without loss of generality.
Then
$$Q\setminus L= L' + (S_i\text{-kingdom})$$
 is a closed subtrail of $Q$   not intersecting  $S_j(\Lambda)$.
So   $Q\setminus L$  must  belong to an outpath $L^*$ w.r.t. $S_j(\Lambda)$.
This outpath $L^*$ contains $L'$ and hence contains special edges, so it does not  belong to the $S_j$-kingdom.
It follows that the $S_i$-kingdom, as a subset of $L^*$, does not intersect the $S_j$-kingdom. The lemma is proved.
\end{proof}

 \subsection{Operation cells}
 First, let us reformulate the notion of primitivity. Let $Q$ be an Euler-partition.
 We construct a graph $T(Q)$ as following: the vertex set is $\{1,\dots, m\}$. If $Q_k$ contains a cell
 $S_j(\overrightarrow{a_{k'}a_{k'+1}})$ or $S_j(\overrightarrow{a_{k'}a_{k'+1}}^{-1})$ for some $j\in \Sigma$,
 then there is an edge from $k$ to $k'$. Then $Q$ is primitive if and only if there is an integer $n_0$, for any
 $i,j\in \{1,\dots, m\}$, there is a trail in $T(Q)$ of length $n_0$ from $i$ to $j$.

Set
$$V=\{k~; ~~Q_k \text{ contains pure cells}\}, \quad V'=\{1,\dots,m\}/V.$$

If $V'=\emptyset$, then $T(Q)$ is a complete graph and  clearly $Q$ is primitive.
 So we assume that $V'\neq \emptyset$. Let $k_0$ be the minimal element of $V'$.

For each $k\in V$, we select a pure cell contained in  $Q_k$, and call it  the \emph{protected cell}, or a \emph{black cell},  of $Q_k$.
It is possible that this black cell is contained in a $S_j$-kingdom.
If this happens, then we call $S_j(\Lambda)$  an \emph{indirectly protected cell}, or \emph{grey cell},
 associated with $Q_k$. (This means that
$S_j(\Lambda)$ is a non-special bi-partition cell).
By the disjointness of kingdoms,  there is at most one grey cell associated with  $Q_k$.

For any $k\in V'$, $Q_k$ only  visits  bi-partition cells and poly-partition cells. We  shall choose
operation cells among the bi-partition cells  which are visited by $Q_k$, non-special and are not grey cells;
the lemma below estimates the number of such cells.  Let $|Q_j|$ be the length of $Q_j$, i.e. the number of edges in $Q_j$.

\begin{lemma}\label{lem:length} Let $k\in V'$. If
\begin{equation}\label{eq:length}
\min_{1\leq j\leq m} |Q_j|\geq m^4+5m^2,
\end{equation}
then the number of bi-partition cells which are  visited by $Q_k$, non-special and not indirectly protected,  is at least
  $2m$.
\end{lemma}
\begin{proof}  The total number of  special cells, non-special  poly-partition cells, and the indirectly protected cells are no more than
$2m$, $ \frac{m(m-1)(m-2)}{6}$ and $m$, respectively (see Lemma \ref{Estima:poly}.). The number of cells visited by $Q_k$ is at least $|Q_k|/m$. So the number of
the desired cells is no less than
$|Q_k|/m-(2m+\frac{m(m-1)(m-2)}{6}+m)\geq 2m.$
\end{proof}

Now we assume that the condition of Lemma \ref{lem:length} is fulfilled.
Denote
$$r=m+(\# V')-1.$$
By the above lemma,
we can choose $r$ non-special, not indirectly protected, bi-partition cells
\begin{equation}\label{eq:op-cell}
T_1,\dots, T_m, T_{m+1}, \dots, T_r
\end{equation}
such that
$Q_{k_0}$ (where $k_0=\min V'$)  visits the first $m$ of them, and for the other $k\in V'$, each $Q_k$ visits
a different cell in $T_{m+1}, \dots, T_r$.
We call them \emph{operation cells}. In other words, we associate with $k_0$ the first $m$ operation cells, and for each $k\in V'\setminus\{k_0\}$, we associate with one operation cell.

 \subsection{$(v,k)$-operation}
Let $v$ be an edge of $\Lambda$ and let $k\in V'$.
Let $S_{\ell}(\Lambda)$ be an operation cell associated with  $k\in V'$.
 We do a $(v,k)$-operation to $S_{\ell}$ as follows.

Let  $Q_{k^*}$ be the other trail in $\{Q_j;~1\leq j\leq m\}$ visiting the cell $S_{\ell}(\Lambda)$.
Let $D+D^*$ be the partition of the $S_\ell$-kingdom.
By  \eqref{eq:Q_k}, $Q_k$ and $Q_{k^*}$ can be written as
$$Q_k=C+D+E, \quad Q_{k^*}=C^*+D^*+E^*$$

If $S_{\ell}(v)$  belongs to $Q_k$, the $v$-operation is a null operation,
that is, we do not change $Q$. Otherwise,
 the $v$-operation is to   construct a new Euler-partition $Q'=Q_1'+\cdots +Q_m'$  by exchanging $D$ and $D^*$, precisely,
we set
$$
Q_k'=C+(D^*)^{-1}+E,\quad
Q_{k*}'=C^*+D^{-1}+E^*,
$$
and for  $j\not\in \{ k, k^*\}$, set  $Q_j'=Q_j$.
 Then the following statements (i)-(iv) hold:

 \medskip

$(i)$ \textit{$Q'$ is an Euler tour in $G(S, A, \beta')$ where $\beta'$ is obtained by changing
 the orientation of all the cells in the $S_{\ell}$-kingdom.}
 (This is clearly from the construction.)

 \medskip

 $(ii)$ \textit{$Q'$ contains pure cells if $Q$ does;}
  (This is true, since if $Q_j$ contains a pure cell,   the protected cell of $Q_j$ still belongs to $Q_j$ since the operation does not change
 any protected cell.)

 \medskip

 $(iii)$  \textit{$Q'$ is still consistent since $Q_k'$ and $Q_k$ have the same initial and terminate points, $1\leq k\leq m$.}

 \medskip

$(iv)$ \textit{Either $S_\ell(v)$ or  $S_\ell(v^{-1})$ belongs to  $Q_k'$.}
(If   $S_\ell(v)$ does not belong to  $Q_k$, then it must appear in $D^*$. After operation, $S_\ell(v^{-1})$ belongs to $Q_k$.)

Now we can proof the main theorem of this section.
\medskip

\noindent \textbf{Proof of Theorem \ref{thm:operation}.} Write $\Lambda=v_1+\cdots +v_m$.
For $j=1,\dots, m$,
 we do $(v_j,k_0)$-operation to the $j$-th cell in \eqref{eq:op-cell}, which are operation cells associated with the minimal element $k_0$ in $V'$.
For the operation cells associated to the  indices $k\in V'\setminus\{k_0\}$, we do the $(v_{k_0},k)$-operation.
We can do these $r$ operations consecutively, since the operation cells belonging to the kingdoms are disjoint.

Let $Q'=Q'_1+\cdots Q'_m$ be the resulted Euler-partition after doing all the operations.

First, $Q'$ is an Euler-partition of $G({\cal S}, A,  \beta')$,
where $\beta'$ is obtained by changing the orientations of the cells in the kingdoms which are involved in the operations.

Secondly, $Q'$ is consistent, since all the operations do not change the origins and terminates of the partition.

 Thirdly, $Q'$ is primitive, since  $v_{k_0}$ appears in every $Q_k$ for $k\in \{1,\dots, m\}$, and all $v_j$, $j=1,\dots,m$, appear in $Q_{k_0}$.

Finally, the pure cell property is clearly preserved. The theorem is proved.
$\Box$

\medskip

\section{\textbf{Proof of Theorem \ref{thm:main}}}\label{sec:pure}
In this section, we prove Theorem \ref{thm:main}. We consider two cases according to the behaviour of the Hata graphs $H(\cS^n, A)$, $n\geq 1$: if $H(\cS^n, A)$ is not a chain for some $n\geq 1$, then we show that $K$ admits SFCs constructed by the Euler tour method;
otherwise, $\cS$ is a self-similar zipper.

A graph with $n$ vertices is a chain if there exists an order $x_1, \dots,x_n$ of the vertices such that there is exactly one edge between $x_{i}$ and $x_{j}$ if $|i-j|=1$, otherwise there is no edge.

\begin{thm}\label{thm:case2} Let $\cS=\{S_i\}_{i\in \Sigma}$ be an IFS satisfying the OSC. Suppose that $\cS$ has skeleton $A$ and the Hata graph $H(\cS^k, A)~(k\geq 1)$ is not always a chain.
Then there exist an integer $n$, a permutation $\tau$ of $\{1,\dots,m\}$,
 a vector $\beta\in \{-1,1\}^{N^n}$ and an Euler-partition
$$P=P_1+\cdots +P_m$$ of
$G(\cS^{n}, A_\tau, \beta)$
such that $P$ is consistent, primitive and potentially contains pure cells.
\end{thm}

\begin{proof} By the assumption, there
exists an integer $n_1$ such that the Hata graph $H(\cS^{n_1}, A)$ is not a chain.
By Theorem \ref{thm:consistency}, there exist an integer $n_2$ and a permutation $\tau$ on $\{1,\dots, m\}$, and  an Euler-partition
$$Q=Q_1+\cdots+Q_m$$
of $G(\cS^{n_1n_2}, A_\tau)$  such that  $Q$ is consistent. Moreover, $Q$ 
   contains more than $3$ unbroken cells by Remark \ref{rem:pure}.

Notice that  $Q^\ell$, the $\ell$-th iteration of $Q$,
 is still consistent and contains more than $3$ unbroken cells.
If we choose $\ell$ large enough, then  the length of each $(Q^\ell)_j$ can be large enough to satisfy
inequality \eqref{eq:length2}, so $Q^\ell$ satisfies all  conditions of Theorem \ref{thm:operation}.
Therefore, there exist an orientation vector $\beta$, and an Euler-partition
$P=P_1+\cdots+P_m$
of
$G(\cS^{n_1n_2\ell}, A_\tau, \beta)$ such that $P$ is consistent and primitive.

Finally, we show that $P$ contains pure cells.
Since   $2^\ell>2m$ when $\ell$ is large enough, by Lemma \ref{lem:follower}(iii),
$Q^\ell$ contains more than $2m$ unbroken cells.
Since there are only $2m$ special edges in the Euler-partition $Q^\ell$,
there exists an  unbroken cell which  contains no special edge.
This unbroken cell, as a subtrail of $Q^\ell$,  can not cross the border of any elements in the partition $Q^\ell$, so it
  must  be a pure cell of $Q^\ell$.
  Therefore, by Theorem \ref{thm:operation}, $P$ also contains pure cells.
\end{proof}

\medskip
Recall that $\Sigma=\{1, 2, \dots, N\}$.

\begin{theorem}\label{thm:zipp}
Let $\cS=\{S_i\}_{i\in \Sigma}$ be an IFS with connected invariant set $K$ satisfying the OSC.  Suppose that $A$ is a skeleton of $K$ such that the Hata graph $H(\cS^n, A)$ is a chain for every $n\geq 1$, then an arrangement of $\{S_i\}_{i\in \Sigma}$ is  a self-similar zipper.
\end{theorem}

\begin{proof} We will abbreviate $H(\cS^n, A)$ as $H_n$.
Since $H_1$ is a chain, without lose of generality, we assume that $H_1$ passes the vertices in the order $S_1,S_2,\dots, S_N$.
To simplify the notation,  we denote $H_1$ by
$$H_1=(1, 2,\dots, N)$$
instead of  $(S_1, S_2, \dots, S_N)$. Moreover,
we set $1$ to be  the origin of the chain and $N$  the terminus.
For $1\leq j\leq N$, by self-similarity,
\begin{equation}\label{subchain}
({j1}, {j2}, \dots , {jN}) \text{ or } ({jN},\dots , {j2}, {j1})
\end{equation}
is a sub-chain of $H_2$,
the starting vertex of $H(\cS^2, A)$ is $S_{1k}$ for some $k\in \{1, N\}$,
and the ending vertex is $S_{Nk'}$ for some  $k'\in \{1, N\}$.
Accordingly,  we define a sequence $(\beta_j)_{j=1}^N$ as following:  $\beta_j=1$
if the former sequence in \eqref{subchain} is a sub-chain of $H(\cS^2, A)$, and $\beta_j=-1$ otherwise.

 We use $j(a_1,\dots, a_k)$ to denote the chain  $(ja_1,\dots, ja_k)$, then  we have
$$H_2=(1H_1)^{\beta_1}+(2H_1)^{\beta_2}+\dots+(NH_1)^{\beta_N},$$
where we use $`+'$ to denote the concatenation of two sequences, and denote $(a_1,\dots, a_k)^{-1}=(a_k,\dots, a_1)$
to be the reverse of a chain.

 First,
we prove by induction that
\begin{equation}\label{eq:Hn}
H_n=(1H_{n-1})^{\beta_1}+\cdots+(NH_{n-1})^{\beta_N}.
\end{equation}
By self-similarity, for any $j\in \{1,\dots, N\}$, either
$jH_{n-1}$ or $(jH_{n-1})^{-1}$ is a sub-chain of $H_n$, so there exist
$\epsilon_1,\dots, \epsilon_N\in \{-1,1\}$ such that
$$
H_n=(1H_{n-1})^{\epsilon_1}+\cdots+(NH_{n-1})^{\epsilon_N}.
$$
Since $(jH_{n-1})^{-1}=j(H_{n-1})^{-1}$, using induction hypothesis, we have
$$
\begin{array}{rl}
H_n=&1(H_{n-1})^{\epsilon_1}+\cdots+N(H_{n-1})^{\epsilon_N}\\
=&\cdots +j\left (1H_{n-2}^{\beta_1}+\cdots +NH_{n-2}^{\beta_N}\right )^{\epsilon_j}+(j+1)\left (1H_{n-2}^{\beta_1}+\cdots +NH_{n-2}^{\beta_N}\right )^{\epsilon_{j+1}}+\cdots
\end{array}
$$
Notice that if $i_1\cdots i_n$ and $j_1\cdots j_n$
are adjacent in $H_n$, then $i_1\cdots i_k$ and $j_1\cdots j_k$
are adjacent in $H_k$ for any $k<n$.  The above decomposition of $H_n$ forces that
$$
j(1,\dots, N)^{\epsilon_j}+(j+1)(1,\dots, N)^{\epsilon_{j+1}}
$$
is a sub-chain of $H_2$, so $\epsilon_j=\beta_j$ and $\epsilon_{j+1}=\beta_{j+1}$, which proves \eqref{eq:Hn}.

Using $(\beta_1,\dots, \beta_N)$, we define the following  ordered GIFS:
\begin{equation}\label{eq:two-state}
\begin{split}
E_1=&S_1(E_{\beta_1})+S_2(E_{\beta_2})+\dots+S_N(E_{\beta_N})\\
E_{-1}=&S_N(E_{-\beta_N})+\dots+S_2(E_{-\beta_2})+S_1(E_{-\beta_1}).
\end{split}
\end{equation}
Clearly,    $K=E_1=E_{-1}$ by the uniqueness of invariant set of $\cS$.

 Precisely, the vertex set of the GIFS
 is $\{E_1, E_{-1}\}$.  Let $\Gamma$ be the basic graph of the GIFS, then
 the $j$-th edge from $E_1$ is an edge from $E_1$ to $E_{\beta_j}$ with similitude $S_j$, which we denote by $(E_1, S_j, j, E_{\beta_j})$; the $j$-th edge starting from $E_{-1}$ goes to $E_{-\beta_{N-j+1}}$ and has similitude $S_{N-j+1}$, which we  denote by $(E_{-1}, S_{N-j+1}, j, E_{-\beta_{N-j+1}})$.

By Theorem \ref{lem:zipper}, to prove the theorem, we need only  prove  \eqref{eq:two-state} is a linear GIFS.

Let $G_n$ be the sequence of trails in the graph $\Gamma$ emanating from the state $E_1$, arranged in the increasing order.
Notice that for each trails in $G_n$, the associated contraction has the form $S_I$ with $|I|=n$, and for different trails, the associated contractions
 are distinct. So, we replace each trail in $G_n$ by the associated contraction and further more replace $S_I$ by the word $I$,
 then we obtain a sequence which is a permutation  of $\Sigma^n$. For simplicity, we still denote this sequence by $G_n$.
  Similarly, we denote by $G_n'$ the sequence of trails in the graph emanating from the state $E_{-1}$, arranged in the
  increasing order. We apply the same simplification to $G_n'$.

  We shall prove that $G_n=H_n$ and $G_n'=G_n^{-1}$.

 Clearly, $G_1=(1,2,\dots, N)$, $G_1'=(N,\dots, 1)$. Moreover, by \eqref{eq:two-state},
 $$
 G_2=(1G_1)^{\beta_1}+\cdots +(NG_1)^{\beta_N}.
 $$

We claim that, for every $n\geq 1$, it holds that
\begin{equation}\label{eq:Gn}
G_n=(1G_{n-1})^{\beta_1}+\cdots +(NG_{n-1})^{\beta_N}
\end{equation}
and $G_n'=G_n^{-1}$.

We prove by induction on $n$.
Let us consider the words in $G_n$ initialed by $j$.
If $\beta_j=1$, then after one step, we are still in the state $E_1$, hence
the arrangement of such words in increasing order is $jG_{n-1}$;
if $\beta_j=-1$, we switch to the state $E_{-1}$ after one step, and the corresponding arrangement
is $jG_{n-1}'=j(G_{n-1})^{-1}$.
Therefore, 
$$
G_n=(1G_{n-1})^{\beta_1}+(2G_{n-1})^{\beta_2}+\cdots+(NG_{n-1})^{\beta_N}.
$$
The same argument shows that
$$
\begin{array}{rl}
G_n'=&(NG_{n-1}')^{\beta_N}+\cdots+(2G'_{n-1})^{\beta_2}+(1G'_{n-1})^{\beta_1}\\
    =&(NG_{n-1})^{-\beta_N}+\cdots+(2G_{n-1})^{-\beta_2}+(1G_{n-1})^{-\beta_1}\\
    =&G_n^{-1}.
\end{array}
$$
The claim is proved.

Comparing \eqref{eq:Hn} and \eqref{eq:Gn}, we obtain that $G_n=H_n$ for all $n\geq 1$.
Hence, if $I$ and $I'$ are two adjacent words in $G_n$ (or in $G_n'$), then they are also adjacent in $H_n$, so
$$
S_I(K)\cap S_{I'}(K)\supset S_I(A)\cap S_{I'}(A)\neq \emptyset,
$$
which implies that \eqref{eq:two-state} is a linear GIFS.

\end{proof}

\noindent \textbf{Proof of Theorem \ref{thm:main}}.
Let $\{S_i\}_{i\in \Sigma}$ be an IFS satisfying the OSC. Let $A$ be a skeleton of $\cS$.

If the Hata graph $H(\cS^n, A)$ is  alway a chain, then $K$ can be generated by a self-similar zipper by Theorem \ref{thm:zipp},
and hence admits space-filling curves.

    If $H(\cS^n, A)$ is not alway a chain, by Theorem \ref{thm:case2}, there exists a integer $k$ such that an Euler-partition of $G(\cS^k, A, \beta)$ satisfying the conditions in Theorem \ref{thm:old}. Hence
    the invariant set of $\cS^k$, which is also $K$,     admits
    space-filling curves constructed by the  Euler-tour method.
 $\Box$

%
%
%
%
%
%
%
%
%
%


\end{document}